\newtheorem{thm}{Theorem}
\newtheorem{lemma}{Lemma}[section]
\newtheorem{remark}{Remark}
\newenvironment{pf1}{\mbox{ }\\{\bf  Proof of Theorem 1}\mbox{ }}{
\hfill $\Box$\mbox{}\bigskip}
\newenvironment{pf2}{\mbox{ }\\{\bf  Proof of Theorem 2}\mbox{ }}{
\hfill $\Box$\mbox{}\bigskip}
\newenvironment{pf3}{\mbox{ }\\{\bf  Proof of Theorem 3}\mbox{ }}{
\hfill $\Box$\mbox{}\bigskip}
\newenvironment{pf4}{\mbox{ }\\{\bf  Proof of Theorem 4}\mbox{ }}{
\hfill $\Box$\mbox{}\bigskip}
\title{$\textbf{1}$. Introduction}
\begin{document}


\title[Three-dimensional Ricci-degenerate Riemannian manifolds]{Three-dimensional Ricci-degenerate Riemannian manifolds satisfying geometric equations}

\author{Jinwoo Shin}
 \maketitle

\begin{abstract}

In this paper, we study a three-dimensional Ricci-degenerate Riemannian manifold $(M^3,g)$  that admits a smooth nonzero solution $f$ to the equation
\begin{align} \label{a1a}
  \nabla df=\psi Rc+\phi g,
\end{align}
where $\psi,\phi$ are given smooth functions of $f$, $Rc$ is the Ricci tensor of $g$. Spaces of this type include various interesting classes, namely gradient Ricci solitons, $m$-quasi Einstein metrics, (vacuum) static spaces, $V$-static spaces, and critical point metrics.

 The $m$-quasi Einstein metrics and vacuum static spaces were previously studied in \cite{JJ,JEK}, respectively. In this paper, we refine them and develop a general approach for the solutions of (\ref{a1a}); we specify the shape of the metric $g$ satisfying (\ref{a1a}) when $\nabla f$ is not a Ricci-eigen vector. Then we focus on the remaining three classes, namely gradient Ricci solitons, $V$-static spaces, and critical point metrics. Furthermore, we present classifications of local three-dimensional Ricci-degenerate spaces of these three classes by explicitly describing the metric $g$ and the potential function $f$.

\end{abstract}

\section{Introduction}

\renewcommand{\thepage}{\arabic{page}}

We consider a three-dimensional Riemannian manifold $(M^3,g)$ that admits a smooth nonzero solution $f$ to the equation
\begin{align}
  \nabla df=\psi Rc+\phi g, \label{ggrsdef}
\end{align}
where $\psi,\phi$ are given smooth functions of $f$, $Rc$ is the Ricci tensor of $g$. One can easily see that if $\psi\neq0$ and $f$ is a constant function (we call this {\it trivial}), then this is nothing but an Einstein manifold equation.  In this paper, we study
some well-known classes of spaces with the function $f$ that satisfies (\ref{ggrsdef}).

The first class is when $\psi=-1$ and $\phi=\lambda$, where $\lambda$ is a constant, i.e., $(M^3,g,f)$ is a \textit{gradient Ricci soliton} satisfying the following equation:
\begin{align}
  \nabla df+Rc=\lambda g. \label{soldef}
\end{align}
A gradient Ricci soliton is said to be shrinking, steady, or expanding if $\lambda$ is positive, zero, or negative, respectively.
The gradient Ricci soliton has significance as a singularity model of Ricci flow. Therefore, considerable effort has been devoted toward understanding its geometric properties and classifying it.
Studies have been conducted under various geometric conditions. Some of these studies are related to this paper, as stated below.

 In \cite{BM}, J. Berstein and T. Mettler classified two-dimensional  complete gradient Ricci solitons. In \cite{CCZ}, any three-dimensional complete noncompact non-flat shrinker was
 proved to be a quotient of the round cylinder $\mathbb{S}^2\times \mathbb{R}$; see also \cite{Iv3, NW, P}. For the three-dimensional gradient steadiers, S. Brendle showed that a three-dimensional steady gradient Ricci soliton that is non-flat and $\kappa$-noncollapsed is isometric to the Bryant soliton up to scaling in \cite{Br}; see also \cite{Cao1}.
 In higher dimensions, gradient Ricci solitons have been studied when the Weyl tensor satisfies certain conditions. Complete locally conformally flat solitons have been studied in \cite{CC1, CWZ, CM, PW2, Z}. Gradient Ricci solitons with harmonic Weyl tensors have been studied in \cite{FG, Ki, MS, WWW}. Bach-flat cases have been studied in \cite{CCCMM, CC2}.

Next, we consider a \textit{$V$-static space} \cite{CEM} that is a Riemannian manifold $(M,g)$ that admits a smooth  function $f$ satisfying
\begin{align}
  \nabla df =f(Rc-\frac{R}{n-1}g)-\frac{\kappa}{n-1}g \ \ \ \ {\rm for} \ {\rm a} \  {\rm constant} \ \kappa.\label{miaodefin}
\end{align}

Note that the existence of a nonzero solution to (\ref{miaodefin}) guarantees that the scalar curvature is constant. The (vacuum) static spaces corresponding to $\kappa =0$ have been studied in general relativity since the beginning of 20th century. Moreover, they have been the focus of a recent study \cite{QY2} related to positive mass conjecture. In addition, the geometric significance of
this class of spaces for $\kappa \neq 0$  has been well studied in \cite{MT2, MST} and especially in \cite{CEM}.
Harmonic Weyl cases were studied in \cite{ BBR, JJ2}. More related to this article, three-dimensional Ricci-degenerate spaces in the case $\kappa=0$ were already studied in  \cite{Le, JEK} by different arguments from this article.  Here we mainly study the case $\kappa\neq0$.

Finally, one may consider Riemannian metrics $(M, g)$ of constant scalar curvature that admit a non-
constant solution $f$ to
\begin{align}
  \nabla df=f(Rc-\frac{R}{n-1}g)+Rc-\frac{R}{n}g. \label{cpedef}
\end{align}
If $M$ is a closed manifold, then $g$ is a critical point of the total scalar curvature functional defined on the space of Riemannian metrics with unit volume and with constant scalar curvature on $M$. By an abuse of terminology, we shall refer to a metric $g$ satisfying (\ref{cpedef}) as \textit{a critical point metric} even when $M$ is not closed. In \cite{Be}, the conjecture that a compact critical point metric is always Einstein was
raised. This conjecture was verified under some geometric conditions, namely locally conformally flat \cite{La}, harmonic curvature \cite{HCY}, and Bach-flat \cite{QY} conditions.
A number of studies have investigated this subject, including \cite[Section 4.F]{Be} and \cite{BR,  JJ2, HY}.

In this paper, we study three-dimensional Riemannian manifolds with two distinct Ricci eigenvalues having a nonzero solution to (\ref{ggrsdef}). This paper is the result of efforts devoted toward refining and generalizing \cite{JJ}, which concerns three-dimensional $m$-quasi Einstein manifolds.
We preferentially study the common properties of spaces for general $\psi(f)$ and $\phi(f)$. We show that when $\nabla f$ is not an Ricci-eigen vector, the metric $g$ must have a specific form. Then, we focus on each of the three specific classes stated above.
We mainly adopt a local approach and state local versions of theorems below for the result.
 Other versions for complete spaces can follow readily, as the three classes of spaces are real analytic. One may also note below that these local spaces have their own geometric significance.

\begin{thm}\label{metricthm1}
   Let $(M^3,g,f)$ be a three-dimensional Riemannian manifold satisfying (\ref{ggrsdef}) with Ricci eigenvalues $\lambda_1\neq\lambda_2=\lambda_3$. Consider an orthonormal Ricci-eigen frame field $\{E_i \ | \ i=1,2,3\}$ in an open subset of $\{\nabla f\neq 0\}$ such that $E_2\perp \nabla f$ and $E_3f\neq0$. Then there exists a local coordinate system $(x_1,x_2,x_3)$ in which the metric $g$ can be written as
   \begin{align}
    g=g_{11}(x_1,x_3)dx_1^2+g_{33}(x_1,x_3)v(x_3)dx_2^2+g_{33}(x_1,x_3)dx_3^2 \label{startmetric22}
  \end{align}
  for a function $v(x_3)$ where $E_i=\frac{1}{\sqrt{g_{ii}}}\frac{\partial}{\partial x_i}$.
 \end{thm}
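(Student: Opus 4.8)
The plan is to build the coordinates in two stages: first show that the three mutually orthogonal $2$-plane fields $\langle E_2,E_3\rangle$, $\langle E_1,E_3\rangle$, $\langle E_1,E_2\rangle$ are all integrable, so that $g$ is diagonal in adapted coordinates with $\partial_{x_i}\parallel E_i$; then read off the connection-coefficient relations that upgrade the diagonal form to the warped shape (\ref{startmetric22}). Throughout I write $\nabla f=f_1E_1+f_3E_3$, so that $f_2=E_2f=0$ while $f_1,f_3\neq 0$ precisely because $\nabla f$ is assumed not to be a Ricci-eigenvector, and I set $\Gamma^k_{ij}=g(\nabla_{E_i}E_j,E_k)$, antisymmetric in $(j,k)$. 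Evaluating (\ref{ggrsdef}) on the eigenframe shows the Hessian is diagonal: $\nabla df(E_i,E_j)=(\psi\lambda_i+\phi)\delta_{ij}$.

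First I would extract the first-order information. Expanding the six off-diagonal identities $\nabla df(E_i,E_j)=0$ ($i\neq j$) in terms of the $\Gamma^k_{ij}$ and the derivatives $E_if_j$ produces a linear system which, because $f_1,f_3\neq 0$, can be solved for several of the ``mixed'' connection coefficients. This step alone does not close up; in particular it cannot by itself force the structure functions $c_{ijk}=g([E_i,E_j],E_k)$ with three distinct indices to vanish, which is what integrability of the three plane fields requires.

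The decisive input is curvature, and this is the step I expect to be the main obstacle. Applying the Ricci identity to the third derivatives of $f$ and substituting $\nabla_i\nabla_j f=\psi R_{ij}+\phi g_{ij}$, with $\psi,\phi$ functions of $f$, yields the master relation
\[
\psi\,(\nabla_kR_{ji}-\nabla_jR_{ki})+\psi'\,(f_kR_{ji}-f_jR_{ki})+\phi'\,(f_kg_{ji}-f_jg_{ki})=-R_{kji}{}^{l}f_l,
\]
where $'=d/df$. In dimension three the right-hand side is rewritten through the formula expressing the Riemann tensor in terms of $Rc$ and $R$, while in the eigenframe $(\nabla_{E_k}Rc)(E_j,E_i)=\delta_{ij}\,E_k\lambda_i-(\lambda_i-\lambda_j)\Gamma^i_{kj}$. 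The essential simplification is that $\lambda_2=\lambda_3$, so every eigenvalue difference collapses to $\pm(\lambda_1-\lambda_2)$, which is nonzero. Running through the admissible index choices $(i,j,k)$, and using the contracted second Bianchi identity $2\operatorname{div}Rc=dR$ to control the diagonal derivatives $E_k\lambda_i$, one obtains a closed system; dividing by the nonzero quantities $f_1$, $f_3$ and $\lambda_1-\lambda_2$ forces all three-distinct-index structure functions to vanish, forces $\Gamma^1_{12}=\Gamma^3_{32}=0$, and yields $\Gamma^2_{21}=\Gamma^3_{31}$. All of the genuinely delicate bookkeeping lives in this elimination.

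Finally I would assemble the coordinates. Integrability of the three plane fields gives, by Frobenius, functions $x_1,x_2,x_3$ with $dx_i(E_j)=0$ for $i\neq j$ and $\partial_{x_i}\parallel E_i$, so $g=\sum_i g_{ii}\,dx_i^2$ is diagonal with $E_i=\frac{1}{\sqrt{g_{ii}}}\partial_{x_i}$. Writing $h_i=\sqrt{g_{ii}}$ and using $\Gamma^i_{ij}=E_j\ln h_i$, the relations $\Gamma^1_{12}=E_2\ln h_1=0$ and $\Gamma^3_{32}=E_2\ln h_3=0$ say that $g_{11}$ and $g_{33}$ are independent of $x_2$, while $\Gamma^2_{21}=\Gamma^3_{31}$ gives $E_1\ln(h_2/h_3)=0$, i.e. $h_2/h_3$ is independent of $x_1$. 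The degeneracy $\lambda_2=\lambda_3$ furnishes, from the same curvature identities, a local Killing field in the $E_2$-direction; taking its flow parameter as $x_2$ removes the remaining $x_2$-dependence, so that $h_2/h_3$ depends on $x_3$ alone. Setting $v(x_3)=(h_2/h_3)^2$ then gives $g_{22}=g_{33}v(x_3)$ with $g_{11},g_{33}$ functions of $(x_1,x_3)$, which is exactly (\ref{startmetric22}).
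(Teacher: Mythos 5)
Your route is, in substance, the paper's own: the ``master relation'' you write down is exactly Lemma \ref{basiccodazzi} (the Ricci identity applied to $\nabla df=\psi\,Rc+\phi\,g$, combined with the three-dimensional expression of $R_{ijkl}$ through $Rc$ and $R$), and the outputs you claim for your elimination --- vanishing of all three-distinct-index structure functions, $\Gamma_{11}^2=\Gamma_{32}^3=0$, and $\Gamma_{21}^2=\Gamma_{31}^3$ --- are precisely the content of the paper's Lemma \ref{nabla}, after which Frobenius and the coordinate relations $\partial_2g_{11}=\partial_2g_{33}=0$, $\partial_1\ln(g_{22}/g_{33})=0$ proceed exactly as in the paper's proof of Theorem \ref{metricthm1}. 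Since you explicitly defer that elimination (``all of the genuinely delicate bookkeeping lives in this elimination''), this part is a correctly targeted plan rather than a proof. Note also that you assume $E_1f\neq0$: this is not among the hypotheses ($E_3f\neq0$ does not imply it), and the paper's elimination divides only by $E_3f$ and $\lambda_1-\lambda_2$; if your linear algebra genuinely needs $E_1f\neq0$, you prove less than the statement.

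The genuine gap is your final step. At that stage you have $g_{22}=k(x_2,x_3)\,g_{33}$ and must show $k(x_2,x_3)=q(x_2)v(x_3)$, so that rescaling $x_2$ yields (\ref{startmetric22}). You do this by asserting that the degeneracy $\lambda_2=\lambda_3$ ``furnishes, from the same curvature identities, a local Killing field in the $E_2$-direction.'' No such field follows from the degeneracy or from anything you have established; worse, for a metric of this diagonal form a Killing field proportional to $\partial_2$ exists if and only if $\partial_2\partial_3\ln k=0$, i.e. if and only if $k$ factors --- so the assertion is circular, a restatement of the goal. The missing ingredient is the $E_2$-invariance of the remaining connection data: the paper proves $E_2H=0$ (a Jacobi-identity consequence) and then $E_2\Gamma_{22}^3=0$ by applying $E_2$ to the diagonal Hessian identity $\nabla df(E_2,E_2)=\frac{H}{2}(E_1f)-\Gamma_{22}^3(E_3f)=\psi\lambda_2+\phi$, using $E_2E_1f=E_2E_3f=0$, $E_2\lambda_2=0$, $E_2f=0$, and dividing by $E_3f\neq0$. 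Combined with $\partial_2g_{33}=0$ and $\partial_3\ln k=-2\Gamma_{22}^3\sqrt{g_{33}}-\partial_3\ln g_{33}$, this gives $\partial_2\partial_3\ln k=0$ and hence the factorization. Without deriving these $E_2$-invariance statements, your argument does not reach the form (\ref{startmetric22}).
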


Note that one can always choose an orthonormal Ricci-eigen frame field $\{E_i\ | \ i=1,2,3\}$ satisfying $\nabla f\perp E_2$ without loss of generality. 

\begin{thm}
  Let $(M^3,g,f)$ be a three-dimensional gradient Ricci soliton with Ricci-eigenvalues $\lambda_1\neq\lambda_2=\lambda_3$. Then near each point in the open dense
  subset $\{\nabla f\neq 0\}$ of $M$, there exist local coordinates $(x_1,x_2,x_3)$ in which $(g,f)$ can be one of the following:

  {\rm (i)} $g=dx_1^2+h(x_1)^2\tilde{g}$ where $\tilde{g}$ has constant curvature. In particular, $g$ is conformally flat.

 {\rm (ii)} $g=dx_1^2+\tilde{g}$ where $\tilde{g}$ is a 2-dimensional gradient Ricci soliton together with a potential function $\tilde{f}$. And the potential function is $f=\frac{\lambda}{2}x_1^2+\tilde{f}$.
\end{thm}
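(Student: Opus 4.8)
The plan is to split the argument according to whether the gradient $\nabla f$ is parallel to the eigendirection $E_1$ of the simple eigenvalue $\lambda_1$. Two consequences of the soliton equation (\ref{soldef}) will be used throughout. First, since $\nabla df=\lambda g-Rc$ and $Rc$ is diagonal in the Ricci-eigen frame, the Hessian $\nabla df$ is also diagonal there, with $(\nabla df)(E_1,E_1)=\lambda-\lambda_1$ and $(\nabla df)(E_a,E_a)=\lambda-\lambda_2$ for $a=2,3$. Second, the contracted Bianchi identity applied to (\ref{soldef}) yields $\tfrac12\nabla R=Rc(\nabla f)$, which controls how $R$ and the eigenvalues vary along $\nabla f$. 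Since $\lambda_1\neq\lambda_2=\lambda_3$, the vector $\nabla f$ is a Ricci-eigenvector exactly when it is parallel to $E_1$ or lies in the $\lambda_2$-eigenplane; in the latter case, and in the generic non-eigen case, one can rotate the frame so that $E_2\perp\nabla f$ and $E_3f\neq0$, whereas if $\nabla f\parallel E_1$ then necessarily $E_3f=0$ and Theorem \ref{metricthm1} does not apply.

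First I would treat the case $\nabla f\parallel E_1$, arguing directly. From the diagonal form of the Hessian, $\nabla|\nabla f|^2=2\,\nabla df(\nabla f)$ is parallel to $\nabla f$, so $|\nabla f|$ is constant on the level sets of $f$ and the integral curves of $E_1=\nabla f/|\nabla f|$ are geodesics. The same diagonal form shows that the second fundamental form of each level surface equals $\tfrac{\lambda-\lambda_2}{|\nabla f|}$ times the induced metric, so the level sets are totally umbilic. A geodesic unit orthogonal field together with totally umbilic, $|\nabla f|$-constant level sets is precisely the standard criterion for a warped product, giving $g=dx_1^2+h(x_1)^2\tilde g$ with $f=f(x_1)$. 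Finally, evaluating the fibre component of (\ref{soldef}) produces an equation in which every term but the Gauss curvature of $\tilde g$ depends only on $x_1$; hence that curvature is a function of $x_1$ alone, while being the curvature of the $x_1$-independent metric $\tilde g$, so it is constant. This yields (i), and a $1+2$ warped product with constant-curvature fibre is conformally flat.

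Next I would treat the case where $\nabla f$ is not parallel to $E_1$. Then $\nabla f$ has a nonzero component in the $\lambda_2$-eigenplane, so after the rotation above Theorem \ref{metricthm1} supplies coordinates in which $g$ has the form (\ref{startmetric22}). The core of the work is to substitute (\ref{startmetric22}) into (\ref{soldef}): I would compute the Christoffel symbols, the Ricci components, and $\nabla df$ in these coordinates and read off the soliton equations. The off-diagonal equations, together with the already-diagonal Hessian, should force the relevant mixed connection terms and cross derivatives of $f$ to vanish; the diagonal equations should force $\lambda_1=0$ (so $E_1$ is a Ricci-flat direction) and pin down $g_{11}$ to be independent of $x_3$, after which a reparametrization of $x_1$ normalizes it to $1$ and collapses the metric to the product $g=dx_1^2+\tilde g$. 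Tracking the $x_1$-dependence then gives $\partial_1^2 f=\lambda$ with no cross terms, hence $f=\tfrac{\lambda}{2}x_1^2+\tilde f(x_2,x_3)$, and restricting (\ref{soldef}) to the fibre shows that $(\tilde g,\tilde f)$ is a two-dimensional gradient Ricci soliton, which is (ii).

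The hard part will be the second case: managing the coordinate computation with the unknown functions $g_{11}(x_1,x_3)$, $g_{33}(x_1,x_3)$ and $v(x_3)$ and disentangling the resulting overdetermined system. The delicate point is to prove rigorously that $g_{11}$ is, up to reparametrization, independent of the fibre, so that the metric is an honest product rather than a twisted or warped one; I expect this to follow from combining the off-diagonal soliton equation in the $(E_1,E_3)$-block with the identity $\tfrac12\nabla R=Rc(\nabla f)$, which pins the variation of $R$ along $\nabla f$ to the eigenvalue data and thereby removes the remaining freedom in $g_{11}$ and $g_{33}$.
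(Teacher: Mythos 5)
Your first case ($\nabla f\parallel E_1$, equivalently $E_3f=0$) is correct and is essentially the paper's own argument (Lemmas \ref{threesolb1} and \ref{claim112b3}): the diagonal Hessian makes the integral curves of $E_1$ geodesics and the level sets of $f$ totally umbilic with umbilicity factor constant along each level, which gives the warped product, and the fibre curvature is then forced to be constant; this yields case (i).

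The second case, $E_3f\neq 0$, is the real content of the theorem, and there your proposal has a genuine gap: it is a plan rather than a proof, and the two tools you name are insufficient to execute it. The identity $\tfrac{1}{2}\nabla R=Rc(\nabla f,\cdot)$ only gives $\tfrac{1}{2}E_3(\lambda_1+2\lambda_2)=\lambda_2\,E_3f$, i.e.\ it controls the combination $E_3R$ but cannot separate $E_3\lambda_1$ from $E_3\lambda_2$; and the $(E_1,E_3)$-component of the soliton equation is one scalar relation among the unknowns $g_{11},g_{33},v,f$, far from enough to force $\partial_1g_{33}=0$, $\partial_3 g_{11}=0$ and $\lambda_1=0$. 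The paper obtains the missing scalar relation from the once-contracted second Bianchi identity combined with the soliton equation (Lemma \ref{basiccodazzi}(ii)), packaged as the Codazzi tensor $\mathcal{T}=e^{-f}\bigl(Rc-\tfrac{R}{2}g\bigr)$ of \cite{CMM}: its eigenvalue $\mu_2=-\tfrac{1}{2}e^{-f}\lambda_1$ satisfies $E_3\mu_2=0$, i.e.\ $E_3\lambda_1=\lambda_1E_3f$, which is exactly what your listed ingredients cannot produce. Even with this in hand, the paper still needs (a) the dichotomy of Lemma \ref{giftlem} (either $\mu_2$ is constant or $\sqrt{g_{11}}(\mu_2-\mu_1)=c_3(x_1)$), (b) the contradiction argument of Lemma \ref{gamma113} ruling out non-constant $\mu_2$, after which $H=-2\langle\nabla_{E_2}E_2,E_1\rangle=0$ and hence $\partial_1g_{33}=0$ follow, and (c) a further argument in Lemma \ref{solitonresult} (differentiating $\lambda_1=-2\Gamma_{11}^3\Gamma_{22}^3=ae^f$ along $E_3$ and using (\ref{r12211331})) to force $\lambda_1=0$, hence $\Gamma_{11}^3=0$ and $g_{11}=g_{11}(x_1)$, before the metric collapses to a product and the ODE for $k$ appears. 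Your proposal asserts precisely these conclusions as expected outcomes of ``reading off'' the coordinate equations, but supplies no mechanism replacing (a)--(c); since that is where all the work of Section 3 lies, case (ii) is not established.
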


 In \cite{CMM}, it has been shown that a complete three-dimensional Ricci-degenerate simply connected steady gradient Ricci soliton is either isometric to the Riemannian product of $\mathbb{R}$ with a surface or locally conformally flat.
 Their argument depends crucially on the completeness assumption of a soliton, which guarantees nonnegativity of sectional curvatures.
 Our argument is purely local; nonetheless, our Theorem 1 holds for expanding solitons as well as steady ones.

\begin{thm} \label{th2}
    Let $(M^3,g,f)$ be a three-dimensional $V$-static space $(\kappa\neq0)$ with Ricci-eigenvalues $\lambda_1\neq\lambda_2=\lambda_3$. Then near each point in the open dense
  subset $\{\nabla f\neq 0\}$ of $M$, there exist local coordinates $(x_1,x_2,x_3)$ in which $(g,f)$ can be one of the following:

  {\rm (i)} $g=dx_1^2+h(x_1)^2\tilde{g}$ where $\tilde{g}$ has constant curvature. In particular, $g$ is conformally flat.

 {\rm  (ii)} $g=\frac{1}{\{q(x_3)+b(x_1)\}^2}\{dx_1^2+(q')^2dx_2^2+dx_3^2\}$ where $q(x_3)$ and $b(x_1)$ satisfy $(q')^2-2mq^3-lq^2+\alpha q+k=0$ and
     $(b')^2-2mb^3+lb^2+\alpha b+\frac{R}{6}-k=0$ for constants $m\neq0$, $l,\alpha,k$, respectively. The potential function is $f=\frac{c(x_1)}{q+b}$ where $c$ is a solution to $b''c=b'c'-\frac{\kappa}{2}$.

\end{thm}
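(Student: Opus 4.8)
The plan is to read the $V$-static equation (\ref{miaodefin}) in dimension three as the special case of (\ref{ggrsdef}) with $\psi=f$ and $\phi=-\frac{R}{2}f-\frac{\kappa}{2}$, recalling that here the scalar curvature $R$ is constant. I would work throughout on the open dense set $\{\nabla f\neq 0\}$ and split the argument according to whether $\nabla f$ is a Ricci-eigen vector, since this is exactly the dichotomy that governs whether Theorem \ref{metricthm1} applies.

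Suppose first that $\nabla f$ is a Ricci-eigen vector on an open set. Then in the Ricci-eigenframe the Hessian $\nabla df=f(Rc-\frac{R}{2}g)-\frac{\kappa}{2}g$ is diagonal, and because $\lambda_2=\lambda_3$ the shape operator of a regular level set $\{f=\mathrm{const}\}$ is a multiple of the identity; hence the level sets are totally umbilical with a single principal curvature. Using $\nabla f$, reparametrized to arc length $x_1$, as the transverse coordinate gives a warped product $g=dx_1^2+h(x_1)^2\tilde g$. Feeding this back into the field equation together with the constancy of $R$ and the eigenvalue condition $\lambda_2=\lambda_3$ should force the two-dimensional fiber $\tilde g$ to have constant curvature, which yields conclusion (i) and the conformal flatness of $g$.

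Suppose instead that $\nabla f$ is not a Ricci-eigen vector. Here I would apply Theorem \ref{metricthm1} to obtain the expression (\ref{startmetric22}), $g=g_{11}(x_1,x_3)dx_1^2+g_{33}(x_1,x_3)v(x_3)dx_2^2+g_{33}(x_1,x_3)dx_3^2$, with $f=f(x_1,x_3)$ since $E_2\perp\nabla f$. I would then write out the components of the $V$-static equation in the frame $E_i=\frac{1}{\sqrt{g_{ii}}}\partial_i$: the off-diagonal relation $\nabla df(E_1,E_3)=0$ (as $Rc$ is diagonal), the diagonal relations, and constancy of $R$. The first goal is to show $g_{11}=g_{33}$, so that after setting $g_{11}=g_{33}=w^{-2}$ and defining $q$ by $v=(q')^2$, the metric becomes conformal to $dx_1^2+(q')^2dx_2^2+dx_3^2$. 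The decisive step is then to prove that the conformal factor separates additively as $w=q(x_3)+b(x_1)$, which I expect to fall out of the mixed second-order equation for $w$ once the other components have decoupled the $x_1$- and $x_3$-dependence. Substituting $w=q+b$ into the curvature of this conformal-type metric and imposing constant $R$ should split into the two cubic ODEs $(q')^2-2mq^3-lq^2+\alpha q+k=0$ and $(b')^2-2mb^3+lb^2+\alpha b+\frac{R}{6}-k=0$ with a common leading constant $m$. Finally, writing $f=c(x_1)/(q+b)$ and inserting it into the remaining Hessian components should produce the linear ODE $b''c=b'c'-\frac{\kappa}{2}$, whose inhomogeneous term records $\kappa\neq 0$.

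The main obstacle is the Case-(b) integration: establishing $g_{11}=g_{33}$ and, above all, proving the additive separation $w=q(x_3)+b(x_1)$ and extracting the precise cubic ODEs with their shared constant $m$. I expect the bookkeeping of the Ricci and Cotton components of this warped-then-conformal metric to be the heaviest part, and the verification that $m\neq 0$, which is exactly what distinguishes the genuine $\kappa\neq 0$ $V$-static solutions from the static case treated in \cite{JEK}, to require the most care.
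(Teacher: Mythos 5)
Your overall frame (splitting on whether $\nabla f$ is Ricci-eigen, getting a warped product over a constant-curvature surface in the eigen case, and invoking Theorem \ref{metricthm1} in the non-eigen case) matches the paper, and your sketch of case (i) is essentially the paper's Lemma \ref{threesolb1} plus Lemma \ref{claim112b3}. But in the non-eigen case your proposal is a plan rather than a proof, and the plan omits the paper's central device. The paper does not integrate the $V$-static system directly from the form (\ref{startmetric22}); it first exhibits the Codazzi tensor $D=(a_1+f)^2Rc+\{\frac{1}{2}|\nabla f|^2-(a_2+a_1R)f-\frac{R}{4}f^2\}g$ (with $a_1=0$, $a_2=-\frac{\kappa}{2}$ in the $V$-static case) whose eigenspaces coincide with those of the Ricci tensor, and then uses Derdzi\'nski's properties of Codazzi tensors (Lemma \ref{derdlem}) together with Lemma \ref{giftlem} to extract first integrals: $(a_1+f)^3=c_5(x_1)/(3\lambda_2-R)$ and $(3\lambda_2-R)^2=p_2(x_3)g_{33}^{-3}$ (Lemma \ref{e3mu2tam}), and $\sqrt{g_{11}}(\mu_2-\mu_1)=c_3(x_1)$. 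These identities are precisely what decouple the $x_1$- and $x_3$-dependence, give $g_{11}$ proportional to $g_{33}$ after coordinate changes, and produce the additive separation $q(x_3)+b(x_1)$ in Lemma \ref{metriclemtam}. Your expectation that the separation ``falls out'' of the mixed Hessian component once other components decouple is unsupported; this separation is the heart of the matter and is exactly the step the Codazzi-tensor machinery exists to carry out.

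Second, your case analysis is incomplete in a way that directly concerns the hypothesis $\kappa\neq0$. Within the branch $E_3f\neq0$ there is a further dichotomy --- the Codazzi eigenvalue $\mu_2$ is constant or not --- and the constant branch does not lead to your conformal ansatz at all: it leads to metrics of the form $g=p^2dx_1^2+(p')^2dx_2^2+dx_3^2$, which is case (iii) of Theorem \ref{th3}. The paper's Lemma \ref{cpe2233} shows this branch forces $a_1+\alpha=\frac{R}{2}a_1+a_2=0$, which for a $V$-static space would mean $\kappa=0$; this is where $\kappa\neq0$ is actually used, and it is why Theorem \ref{th2} has two cases while Theorem \ref{th3} has three. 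Your proposal instead locates the role of $\kappa\neq0$ in verifying $m\neq0$, which is misplaced: $m\neq0$ follows immediately from the Ricci-degeneracy $\lambda_1\neq\lambda_2$, since $\lambda_1=2m(q+b)^3+\frac{R}{3}$ and $\lambda_2=-m(q+b)^3+\frac{R}{3}$. Without identifying and excluding the $\mu_2$-constant branch, your argument cannot be completed as stated.
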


The explicit spaces in Theorem \ref{th2}  {\rm  (ii)} cannot be defined on closed manifolds, and
most of these are not even complete. However, recent studies have effectively provided a geometric meaning for even local V-static spaces; see \cite{Yu} and \cite[Theorem 2.3]{CEM}.
The spaces in Theorem 2 are neither warped products nor conformally flat.

\begin{thm} \label{th3}
  Let $(M^3,g,f)$ be a three-dimensional critical point metric with Ricci-eigenvalues $\lambda_1\neq\lambda_2=\lambda_3$. Then near each point in the open dense
  subset $\{\nabla f\neq 0\}$ of $M$, there exist local coordinates $(x_1,x_2,x_3)$ in which $(g,f)$ can be one of the following:

  {\rm (i)} $g=dx_1^2+h(x_1)^2\tilde{g}$ where $\tilde{g}$ has constant curvature. In particular, $g$ is conformally flat.

  {\rm  (ii)} $g=\frac{1}{\{q(x_3)+b(x_1)\}^2}\{dx_1^2+(q')^2dx_2^2+dx_3^2\}$ where $q$ and $b$ satisfy $(q')^2-2mq^3-lq^2+\alpha q+k=0$ and
     $(b')^2-2mb^3+lb^2+\alpha b+\frac{R}{6}-k=0$ for constants $m\neq0$, $l,\alpha,k$, respectively. The potential function is $f=\frac{c(x_1)}{q+b}-1$ where $c$ is a solution to $b''c=b'c'+\frac{R}{6}$.

 {\rm  (iii)}  $g=p^2dx_1^2+(p')^2dx_2^2+dx_3^2$ where $p:=p(x_3)$ satisfies $(p')^2=\beta p^{-1}+\gamma$ for constants $\beta<0$ and $\gamma$. The
  potential function is $f=c_1p-1$ where $c_1(x_1)$ satisfies $c_1''+\gamma c_1=0$.

\end{thm}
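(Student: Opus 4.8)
The plan is to reduce the critical point metric equation (\ref{cpedef}) in dimension three to the $V$-static equation (\ref{miaodefin}) already classified in Theorem \ref{th2}. Expanding (\ref{cpedef}) with $n=3$ gives $\nabla df = (f+1)Rc - \frac{R(3f+2)}{6}g$, so in the notation of (\ref{ggrsdef}) one has $\psi = f+1$ and $\phi = -\frac{R(3f+2)}{6}$, with $R$ constant. Putting $u := f+1$ leaves the Hessian unchanged, and regrouping turns the equation into $\nabla du = u\bigl(Rc - \frac{R}{2}g\bigr) + \frac{R}{6}g$, which is exactly (\ref{miaodefin}) for the potential $u$ with constant $\kappa = -\frac{R}{3}$. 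Thus, after the shift $f \mapsto u = f+1$, every three-dimensional critical point metric is a $V$-static space, namely the subfamily with $\kappa = -R/3$.

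Since $R$ is constant I would split on its vanishing. When $R \neq 0$ we have $\kappa \neq 0$, so Theorem \ref{th2} applies verbatim to $(M^3,g,u)$ and gives either the warped product of part (i) or the conformal form of part (ii). Translating back by $f = u - 1$ reproduces the stated data: the metric ODEs for $q$ and $b$ are unchanged, their constant $\frac{R}{6}$ coming from the scalar curvature itself; the potential becomes $f = \frac{c}{q+b} - 1$; and the potential equation $b''c = b'c' - \frac{\kappa}{2}$ turns into $b''c = b'c' + \frac{R}{6}$ upon substituting $\kappa = -R/3$.

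The remaining and genuinely new case is $R = 0$ (equivalently $\kappa = 0$), which falls outside Theorem \ref{th2}; here the equation reduces to the vacuum static equation $\nabla du = u\,Rc$. I would rerun the dichotomy directly. If $\nabla f$ is a Ricci-eigen vector, umbilicity of the level sets of $f$ again yields the warped product (i). If not, Theorem \ref{metricthm1} supplies the shape (\ref{startmetric22}), and matching $\nabla df$ against $(f+1)Rc + \phi g$ entry by entry produces the structure equations for $g_{11}, g_{33}, v$ and $f$. The step that, for $\kappa \neq 0$, forced the conformal relation $g_{11}=g_{33}$ of part (ii) now degenerates: alongside that conformal solution specialized to $R=0$, a second branch with $g_{33}$ constant becomes available. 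Integrating it yields $g = p^2 dx_1^2 + (p')^2 dx_2^2 + dx_3^2$ with $(p')^2 = \beta p^{-1} + \gamma$; a direct curvature computation confirms $R=0$, and the potential $u = c_1(x_1)\,p$ forces $c_1'' + \gamma c_1 = 0$, that is $f = c_1 p - 1$, which is part (iii).

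The principal difficulty is exactly this $\kappa = 0$ analysis. One must locate where the derivation of the conformal form invokes $\kappa \neq 0$, show that relaxing it opens the single extra branch with $g_{33}$ constant, integrate that branch in closed form to the explicit relation $(p')^2 = \beta p^{-1} + \gamma$, and verify that parts (i), (ii), (iii) exhaust the $R=0$ possibilities. This amounts to redoing, at $\kappa = 0$, the coordinate-level matching of the Hessian and Ricci components in the frame of Theorem \ref{metricthm1} that Theorem \ref{th2} performs only under the hypothesis $\kappa \neq 0$, and it is the technical heart of the proof.
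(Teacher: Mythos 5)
Your reduction is correct: with $n=3$, setting $u=f+1$ turns (\ref{cpedef}) into $\nabla du=u\left(Rc-\frac{R}{2}g\right)+\frac{R}{6}g$, which is (\ref{miaodefin}) with $\kappa=-R/3$, and for $R\neq0$ Theorem \ref{th2} then yields parts (i) and (ii) with the right translation ($f=\frac{c}{q+b}-1$ and $b''c=b'c'+\frac{R}{6}$). This organization is genuinely different from the paper's, which never routes the critical point metric through Theorem \ref{th2}: the paper treats both classes at once via the interpolating equation (\ref{generaldef}) (critical point metric corresponds to $a_1=1$, $a_2=-R/3$) and splits cases not on $R$ but on whether the eigenvalue $\mu_2$ of the Codazzi tensor $D=(a_1+f)^2Rc+\left\{\frac{1}{2}|\nabla f|^2-(a_2+a_1R)f-\frac{R}{4}f^2\right\}g$ is constant.

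The genuine gap is the case $R=0$, which is exactly where part (iii) --- the only content of this theorem not already contained in Theorem \ref{th2} --- must come from. You identify it as ``the technical heart'' and describe what would have to be done, but you do not do it; as written, nothing in the proposal establishes part (iii), nor that (i), (ii), (iii) exhaust the $R=0$ possibilities. Moreover the plan is misdirected: you propose to ``locate where the derivation of the conformal form invokes $\kappa\neq0$,'' but in the paper that derivation (Lemma \ref{metriclemtam}) never uses $\kappa\neq0$; it is valid for arbitrary $a_1,a_2$ and rests only on the hypothesis that $\mu_2$ is non-constant. The true dichotomy is constant versus non-constant $\mu_2$. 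In the constant-$\mu_2$ branch (Lemma \ref{cpe2233}) one gets $H=0$, hence $g=g_{11}dx_1^2+v(x_3)dx_2^2+dx_3^2$, and matching the Hessian forces $(a_1+\alpha)p''+\left(\frac{R}{2}a_1+a_2\right)p=0$ together with $Ra_1+3a_2-\frac{R}{2}\alpha=0$, so $a_1+\alpha=\frac{R}{2}a_1+a_2=0$; for $V$-static data $(a_1,a_2)=(0,-\kappa/2)$ this is impossible when $\kappa\neq0$, while for critical point data $(1,-R/3)$ it forces $R=0$ and then integrates to $(p')^2=\beta p^{-1}+\gamma$ and $c_1''+\gamma c_1=0$. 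That computation (or an explicit citation replacing it --- note the paper's Section 4 lemmas are stated for general $a_1,a_2$, so Lemma \ref{cpe2233} applies verbatim to your $R=0$ case) is what your proposal is missing.
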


The spaces in  {\rm  (iii)} of Theorem \ref{th3} are complete and their scalar curvature equals $0$, whereas most of the spaces in {\rm  (ii)} are incomplete.
We presume that even local spaces of Theorem \ref{th3} may have geometric importance, as discussed above.

\bigskip
The gradient vector field $\nabla f $ plays an important role in the study of gradient Ricci solitons, $V$-static spaces, and critical point metrics.  If one can show that $\nabla f$ is a Ricci-eigen vector, then the geometric equation becomes quite tractable. For most three-dimensional Ricci-degenerate spaces satisfying (\ref{ggrsdef}), $\nabla f$ is not Ricci-eigen. This fact requires careful and elaborate arguments when using  $\nabla f$.

Hence, to prove theorems, we consider orthonormal frame fields $\{E_i\ | \ i=1,2,3\}$ such that $\lambda_1=R(E_1,E_1)\neq\lambda_2=R(E_2,E_2)=\lambda_3$ and $E_2\perp\nabla f$. With regard to $E_3$, we have two possible cases: (1) $g(E_3,\nabla f)=E_3f=0$ so that $\nabla f$ is Ricci-eigen, and (2) $E_3f\neq0$.  When $E_3f=0$, we show that $g$ is a warped product metric through well-known arguments.

 When $E_3f\neq0$, it can be shown that there is
a local coordinate system $(x_1,x_2,x_3)$ for each $p\in\{\nabla f\neq0\}$  such that the metric $g$ can be written as $g=g_{11}dx_1^2+g_{22}dx_2^2+g_{33}dx_3^2$, where $E_i=\frac{\partial_i}{\sqrt{g_{ii}}}$. Further, we get a more concrete form of the metric on the basis of the fact that $\lambda_2=\lambda_3$ as well as fundamental properties such as the Jacobi identity.
Next, we explore each of the three classes of spaces mentioned above.  We observe that there exists a Codazzi tensor whose eigenspaces coincide with those of the Ricci tensor. From the presence of this Codazzi tensor, we can show that the $\lambda_2$-eigenspace forms an integrable and umbilic distribution. This provides additional geometric information and a sufficiently good form of the metric $g$ to make conclusive arguments.

\bigskip
The remainder of this paper is organized as follows. In Section 2, we discuss some basic properties of Riemannian manifolds satisfying geometric equations. In Section 3, we classify three-dimensional gradient Ricci solitons with $E_3f\neq0$.
 In Section 4, we classify $V$-static spaces and critical point metrics with $E_3f\neq0$. Finally, in Section 5, we prove our main theorems by dealing with the case where $E_3f=0$.

\section{Three-dimensional manifolds satisfying geometric equation}
In this section, we present our notations and discuss already known basic properties of a three-dimensional manifold that admits a smooth function $f$ satisfying (\ref{ggrsdef}).

Our notational convention is as follows: for orthonormal vector fields $E_i$, $i=1, \cdots, n$ on an $n$-dimensional Riemannian manifold, the curvature components are
$R_{ijkl}:=R(E_i, E_j, E_k, E_l) = < \nabla_{E_i} \nabla_{E_j} E_k - \nabla_{E_j} \nabla_{E_i} E_k  -  \nabla_{[E_i, E_j]} E_k , E_l>  $.
If $F$ is a function of one variable, i.e., $F=F(x)$, then we denote $\frac{\partial}{\partial x}F$ by $F'$.

As is already well known, the Weyl tensor vanishes in a three-dimensional manifold.
 Therefore, (\ref{ggrsdef}) gives us more geometric information than higher-dimensional $n\geq 4$ cases.

\begin{lemma} \label{basiccodazzi}
  Let $(M^3,g,f)$ be a three-dimensional Riemannian manifold satisfying (\ref{ggrsdef}). Then,

\rm{  (i)} $2(d\psi)R+\psi(dR)+4d\phi=2Rc(\nabla \psi-\nabla f,\cdot).$

\rm {(ii)} For a tensor $A_{ijk}=(R_{jk}-\frac{R}{2}g_{jk})(\nabla_i\psi+\nabla_if)+\frac{1}{2}R_{il}(\nabla_l\psi+\nabla_lf)g_{jk}+\psi(\nabla_iR_{jk}-\frac{\nabla_iR}{4}g_{jk})$, $A_{ijk}=A_{jik}$.
\end{lemma}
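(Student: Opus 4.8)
The plan is to extract everything from a single third-order identity obtained by differentiating (\ref{ggrsdef}) once more. Writing (\ref{ggrsdef}) in an orthonormal frame as $\nabla_j\nabla_k f=\psi R_{jk}+\phi g_{jk}$, applying $\nabla_i$, and antisymmetrizing in $i,j$, the Ricci identity $\nabla_i\nabla_j\nabla_k f-\nabla_j\nabla_i\nabla_k f=-R_{ijkl}\nabla^l f$ produces the master identity
\[
(\nabla_i\psi)R_{jk}-(\nabla_j\psi)R_{ik}+\psi(\nabla_iR_{jk}-\nabla_jR_{ik})+(\nabla_i\phi)g_{jk}-(\nabla_j\phi)g_{ik}=-R_{ijkl}\nabla^l f .
\]
Here $\nabla_i\psi$ and $\nabla_i\phi$ are honest gradients (differentiating the functions $\psi,\phi$ contributes no curvature), so the only curvature on the right comes from commuting the derivatives of $f$. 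Both claims are repackagings of this one identity.

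For part (i), I would contract the master identity with $g^{jk}$. Using the contraction $g^{jk}R_{ijkl}=R_{il}$ together with the twice-contracted second Bianchi identity $g^{jk}\nabla_jR_{ik}=\tfrac12\nabla_iR$, the trace collapses to
\[
R\,\nabla_i\psi-R_{il}\nabla^l\psi+\tfrac12\psi\,\nabla_iR+2\,\nabla_i\phi=-R_{il}\nabla^l f ,
\]
and multiplying by $2$ and rearranging gives exactly $2(d\psi)R+\psi(dR)+4\,d\phi=2Rc(\nabla\psi-\nabla f,\cdot)$.

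For part (ii) the dimension enters decisively: since $n=3$ the Weyl tensor vanishes, so $R_{ijkl}$ equals the universal algebraic expression in $Rc$, $R$ and $g$. Substituting this expression into the right-hand side of the master identity and contracting against $\nabla^l f$ converts $-R_{ijkl}\nabla^l f$ into the two Ricci terms $R_{jk}\nabla_if-R_{ik}\nabla_jf$ plus terms proportional to $g_{jk}$ and $g_{ik}$. Transposing the two Ricci terms to the left merges them with the existing $(\nabla_i\psi)R_{jk}$ and $(\nabla_j\psi)R_{ik}$ terms, producing precisely the combination $\nabla_i\psi+\nabla_if$ appearing in $A_{ijk}$; this is where the particular shape of $A$ becomes natural. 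Grouping what is left, I would obtain $A_{ijk}-A_{jik}=C_i\,g_{jk}-C_j\,g_{ik}$ for the $1$-form
\[
C_i=-\nabla_i\phi-\tfrac{R}{2}\nabla_i\psi-\tfrac{\psi}{4}\nabla_iR-\tfrac12R_{il}\nabla^l f+\tfrac12R_{il}\nabla^l\psi .
\]
Finally $-4C_i$ is exactly the difference of the two sides of part (i), hence $C\equiv0$; and since $C_ig_{jk}-C_jg_{ik}=0$ forces $C=0$ (take a trace in $j,k$), this yields $A_{ijk}=A_{jik}$.

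The conceptual content is light—everything follows from differentiating (\ref{ggrsdef}) and the standard Ricci and Bianchi identities—so the main obstacle is purely organizational. I must fix all sign conventions consistently: the sign of $R_{ijkl}$ in the three-dimensional decomposition and the sign of the contraction $g^{jk}R_{ijkl}$ should be pinned down against a model such as the round sphere, after which the numerous $g_{jk}$- and $g_{ik}$-proportional leftovers must be tracked carefully so that they assemble into the single $1$-form $C$ governed by part (i). I anticipate no difficulty beyond this bookkeeping.
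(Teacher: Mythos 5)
Your proposal is correct and takes essentially the same route as the paper: the paper proves (i) by taking the divergence of (\ref{ggrsdef}) and invoking Schur's lemma, which is exactly your $g^{jk}$-trace of the master identity via the contracted Bianchi identity, and it proves (ii) precisely as you do, substituting the Weyl-free three-dimensional decomposition of $R_{ijkl}$ into the Ricci identity and then eliminating $d\phi$ by means of (i). The only blemish is a harmless internal sign slip in one intermediate sentence--with the conventions that make your other formulas true, the Ricci terms in $-R_{ijkl}\nabla^l f$ are $R_{ik}\nabla_j f-R_{jk}\nabla_i f$ rather than the opposite, which is exactly what produces the combination $\nabla_i\psi+\nabla_i f$--but your final reduction $A_{ijk}-A_{jik}=C_i\,g_{jk}-C_j\,g_{ik}$ with the stated $C_i$ vanishing by (i) is the correct bookkeeping outcome.
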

\begin{proof}
  We take the divergence of (\ref{ggrsdef}):
  \begin{align*}
    \psi \cdot div Ric_b=&\nabla^i\nabla_i\nabla_bf-(\nabla^i\psi)R_{ib}-(\nabla^i\phi)g_{ib}\\
    =&\nabla_b\Delta f+(\nabla^if)R_{ib}-(\nabla^i\psi)R_{ib}-\nabla_b\phi.
  \end{align*}
By $\Delta f=\psi R+3\phi$ and Schur's lemma, $\nabla R=2divRic$, we obtain (i).

By the Ricci identity,
\begin{align*}
  -R_{ijkl}\nabla_l f=&\nabla_i\nabla_j\nabla_kf-\nabla_j\nabla_i\nabla_kf\\
  =&(\nabla_i\psi)R_{jk}-(\nabla_j\psi)R_{ik}+\psi(\nabla_iR_{jk}-\nabla_jR_{ik})+(\nabla_i\phi)g_{jk}-(\nabla_j\phi)g_{ik}.
\end{align*}

Furthermore, in the third dimension, $R_{ijkl}\nabla_lf=-(R_{ik}\nabla_jf+R_{jl}g_{ik}\nabla_lf-R_{il}g_{jk}\nabla_lf-R_{jk}\nabla_if)+\frac{R}{2}(g_{ik}\nabla_jf-g_{jk}\nabla_if)$.
Thus, we get
\begin{align*}
  &R_{jk}(\nabla_i\psi+\nabla_if)+(\nabla_i\phi)g_{jk}+R_{il}\nabla_lfg_{jk}+\psi\nabla_iR_{jk}-\frac{R}{2}\nabla_if g_{jk}\\
  =&R_{ik}(\nabla_j\psi+\nabla_jf)+(\nabla_j\phi)g_{ik}+R_{jl}\nabla_lfg_{ik}+\psi\nabla_jR_{ik}-\frac{R}{2}\nabla_jf g_{ik}.
\end{align*}
By substituting $\nabla_i\phi=-\frac{\psi}{4}\nabla_iR-\frac{\nabla_i\psi}{2}R+\frac{1}{2}R_{il}(\nabla_l\psi-\nabla_lf)$ obtained from (i) in the
above equation, we can get (ii).
\end{proof}

As mentioned in the Introduction, we consider three-dimensional manifolds with two distinct Ricci eigenvalues, say $\lambda_1\neq\lambda_2=\lambda_3$.
We can choose an orthonormal Ricci-eigen frame field $\{E_i\}$ in a neighborhood of each point in $\{\nabla f\neq 0\}$ such that $\lambda_1=R(E_1,E_1)\neq \lambda_2=\lambda_3$.
Without loss of generality, we assume that $E_2\perp \nabla f$, i.e., $E_2f=0$. We shall say that such an orthonormal Ricci-eigen frame field $\{E_i\}$ is {\em adapted}.

\medskip

Let $\{E_i\}$ be an adapted frame field. Since $\psi$ and $\phi$ are functions of $f$, $E_2\psi$ and $E_2\phi$ also vanish; then,  so does $E_2R$ by (i) in Lemma \ref{basiccodazzi}. For orthonormal frame fields $\{E_i\}$, we get the following by (ii) in Lemma \ref{basiccodazzi}:
\begin{align}
  &\left(\lambda_i+\frac{1}{2}\lambda_j-\frac{R}{2}\right)(\nabla_j\psi+\nabla_jf)+\psi\left(\nabla_jR_{ii}-\nabla_iR_{ji}-\frac{\nabla_jR}{4}\right)=0 \label{ijcoda}\\
  &\nabla_iR_{jk}=\nabla_jR_{ik} \label{ijkcoda}
\end{align}
for different $i,j,k$.

\begin{lemma}\label{nabla}
Let $(M^3,g,f)$ be a three-dimensional Riemannian manifold satisfying (\ref{ggrsdef}) with Ricci eigenvalues $\lambda_1\neq\lambda_2=\lambda_3$. Consider an adapted frame field $\{E_i\}$ in an open subset of $\{\nabla f\neq 0\}$. Suppose $E_3f\neq0$. Then we get the following:
  \begin{align*}
 & \nabla_{E_1}E_1=\Gamma_{11}^3E_3,\quad \nabla_{E_1}E_2=0,\quad \nabla_{E_1}E_3=-\Gamma_{11}^3E_1\\
 & \nabla_{E_2}E_1=\frac{H}{2}E_2,\quad \nabla_{E_2}E_2=-\frac{H}{2}E_1+\Gamma_{22}^3E_3,\quad \nabla_{E_2}E_3=-\Gamma_{22}^3E_2\\
 & \nabla_{E_3}E_1=\frac{H}{2}E_3,\quad \nabla_{E_3}E_2=0,\quad \nabla_{E_3}E_3=-\frac{H}{2}E_1.
\end{align*}
for functions $\Gamma_{ij}^k$ and $H$.
 And $R_{1221}=-\frac{E_1H}{2}-\Gamma_{11}^3\Gamma_{22}^3-\frac{H^2}{4}$, $R_{1331}=-\frac{E_1H}{2}+E_3\Gamma_{11}^3-(\Gamma_{11}^3)^2-\frac{H^2}{4}$, $ R_{2332}=E_3\Gamma_{22}^3-(\Gamma_{22}^3)^2-\frac{H^2}{4}$.
\end{lemma}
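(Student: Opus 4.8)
The plan is to read off the nine independent connection coefficients $\Gamma_{ij}^k:=\langle\nabla_{E_i}E_j,E_k\rangle$ of the adapted frame and show that all of them vanish except $\Gamma_{11}^3$, $\Gamma_{22}^3$ and a single shearing function. Orthonormality gives the antisymmetry $\Gamma_{ij}^k=-\Gamma_{ik}^j$, so it suffices to control $\Gamma_{i1}^2,\Gamma_{i1}^3,\Gamma_{i2}^3$ for $i=1,2,3$. The one identity I would record at the outset is that, in a Ricci-eigen frame, $\nabla_iR_{jk}=\Gamma_{ij}^k(\lambda_j-\lambda_k)$ whenever $j\neq k$, while $\nabla_iR_{jj}=E_i\lambda_j$; this converts the two Codazzi-type relations of Lemma \ref{basiccodazzi} into linear equations in the $\Gamma$'s. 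The target is then to locate which coefficients are forced to zero and to prove the coincidence $\Gamma_{21}^2=\Gamma_{31}^3$ that underlies the common symbol $H$.

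First I would feed the fully-distinct identity (\ref{ijkcoda}) into this formula. For each fixed $k$ it reads $\Gamma_{ij}^k(\lambda_j-\lambda_k)=\Gamma_{ji}^k(\lambda_i-\lambda_k)$ with $\{i,j\}$ the complementary pair, so the choices $k=2$ and $k=3$, combined with $\lambda_2=\lambda_3$ and $\lambda_1\neq\lambda_2$, immediately annihilate $\Gamma_{31}^2$ and $\Gamma_{21}^3$, hence also their antisymmetric partners $\Gamma_{32}^1,\Gamma_{23}^1$; the case $k=1$ is then automatically consistent.

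Next I would exploit that $\nabla df=\psi Rc+\phi g$ is diagonal in this frame, so $(\nabla df)(E_i,E_j)=E_iE_jf-\sum_k\Gamma_{ij}^kE_kf=0$ for $i\neq j$. Writing $\nabla f=(E_1f)E_1+(E_3f)E_3$ since $E_2f=0$, the $(3,2)$-entry collapses (using $\Gamma_{32}^1=0$ from the previous step) to $\Gamma_{32}^3\,E_3f=0$, whence $\Gamma_{32}^3=0$ because $E_3f\neq0$. To eliminate $\Gamma_{11}^2$ I would turn to (\ref{ijcoda}): taking $(i,j)=(3,2)$ and $(1,2)$ and using that $E_2$ annihilates $f,\psi,\phi$ and $R$, the gradient terms drop out, yielding first $E_2\lambda_2=0$ (so $E_2\lambda_1=0$ from $R=\lambda_1+2\lambda_2$) and then $\Gamma_{11}^2(\lambda_2-\lambda_1)=0$, i.e. $\Gamma_{11}^2=0$. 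The $(1,2)$-entry of the Hessian equation, which reads $\Gamma_{11}^2\,E_1f=\Gamma_{12}^3\,E_3f$, then forces $\Gamma_{12}^3=0$.

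The remaining point, that $\Gamma_{21}^2=\Gamma_{31}^3$, is the one I expect to be the crux, and I would extract it by subtracting the two instances of (\ref{ijcoda}) with $(i,j)=(2,1)$ and $(3,1)$: they share the identical coefficient $\lambda_2+\tfrac12\lambda_1-\tfrac R2$ and the identical gradient term, so their difference reduces to $\psi(\Gamma_{31}^3-\Gamma_{21}^2)(\lambda_1-\lambda_2)=0$, and $\psi\neq0$, $\lambda_1\neq\lambda_2$ finish it; I then set $H/2:=\Gamma_{21}^2=\Gamma_{31}^3$. With every coefficient identified, the three curvature formulas follow by direct expansion of $R_{ijji}=\langle\nabla_{E_i}\nabla_{E_j}E_j-\nabla_{E_j}\nabla_{E_i}E_j-\nabla_{[E_i,E_j]}E_j,E_i\rangle$ together with $[E_i,E_j]=\nabla_{E_i}E_j-\nabla_{E_j}E_i$, the only inputs being the nine coefficients and the derivatives $E_1H$, $E_3\Gamma_{11}^3$, $E_3\Gamma_{22}^3$. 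The genuine subtlety I would flag is the division by $\psi$: on the classes at hand $\psi=-1$ for solitons and $\psi=f$ otherwise, so the argument is valid on the open dense set where $\psi\neq0$, which is all that the local statement requires.
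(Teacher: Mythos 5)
Your proof is correct and follows essentially the same route as the paper's: the paper likewise kills $\Gamma_{31}^2,\Gamma_{21}^3$ via (\ref{ijkcoda}), gets $E_2\lambda_1=0$ and $\Gamma_{11}^2=0$ from (\ref{ijcoda}) with $(i,j)=(3,2),(1,2)$, identifies $\Gamma_{21}^2=\Gamma_{31}^3=\frac{H}{2}$ by comparing $(i,j)=(2,1)$ with $(3,1)$, obtains $\Gamma_{12}^3=\Gamma_{32}^3=0$ from the off-diagonal Hessian entries, and computes the curvatures directly. Your write-up merely makes explicit two things the paper leaves tacit --- the conversion identity $\nabla_iR_{jk}=\Gamma_{ij}^k(\lambda_j-\lambda_k)$ in the eigenframe and the restriction to the set where $\psi\neq0$ (which is already implicit in the paper's derivation of (\ref{ijkcoda})) --- both of which are accurate.
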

\begin{proof}
Let $\nabla_{E_i}E_j:=\Gamma_{ij}^kE_k$. Using (\ref{ijcoda}) and (\ref{ijkcoda}):
\begin{itemize}
  \item When $i=3,j=2$, we get $E_2\lambda_3=0$ that means  also $E_2\lambda_1=0.$
  \item When $i=1,j=2$, we get $E_2\lambda_1=\Gamma_{11}^2(\lambda_1-\lambda_2)$, so $\Gamma_{11}^2=0$.
  \item  Comparing  the case of $i=3,j=1$ with $i=2,j=1$, we can see that $\Gamma_{21}^2=\Gamma_{31}^3$(denote by $\frac{H}{2}$).
  \item  When $i=1,j=2,k=3$, we get $\Gamma_{21}^3=0$.
  \item When $i=1,j=3,k=2$, we get $\Gamma_{31}^2=0$.
\end{itemize}
 From $\nabla df(E_1,E_2)=\nabla df(E_3,E_2)=0$, we get $\Gamma_{12}^3=\Gamma_{32}^3=0$.  The
 curvature components can be computed directly.
\end{proof}

Several facts can be easily established from the above lemma. First, by assuming that $\lambda_2=\lambda_3$, $R_{1221}$ and $R_{1331}$ must
be equal to each other. Thus, we have
\begin{align}
  E_3\Gamma_{11}^3=\Gamma_{11}^3(\Gamma_{11}^3-\Gamma_{22}^3). \label{r12211331}
\end{align}
Second, from the Jacobi identity of the Lie bracket, we get
\begin{align}
  E_2\Gamma_{11}^3=0,\quad E_1\Gamma_{22}^3+\frac{H}{2}(\Gamma_{22}^3-\Gamma_{11}^3)=0,\quad E_2H=0.
\end{align}

Third,
\begin{align*}
  E_2E_1f=(E_1E_2-[E_1,E_2])f=0\\
  E_2E_3f=(E_3E_2-[E_3,E_2])f=0.
\end{align*}
Then, by $\nabla df(E_2,E_2)=\frac{H}{2}(E_1f)-\Gamma_{22}^3(E_3f)=\psi \lambda_2+\phi$, we have $E_2\Gamma_{22}^3=0$.

Finally, let $E_{ij}$ be the distribution generated by $E_i$ and $E_j$. Then, $E_{ij}$ are integrable for $i,j=1,2,3$. Thus, there exists locally a
coordinate system $(x_1,x_2,x_3)$ such that the metric $g$ can be written as
\begin{align}
  g=g_{11}dx_1^2+g_{22}dx_2^2+g_{33}dx_3^2,\label{startmetric}
\end{align}
where $E_i=\frac{1}{\sqrt{g_{ii}}}\partial_i$ and $g_{ii}$ are functions of $x_j$ for $j=1,2,3$ \cite{JJ}. In this coordinate system,
\begin{align}
  \Gamma_{11}^3=-\frac{E_3g_{11}}{2g_{11}},\quad \Gamma_{22}^3=-\frac{E_3g_{22}}{2g_{22}},\quad H=\frac{E_1g_{33}}{g_{33}}=\frac{E_1g_{22}}{g_{22}}. \label{gammacord}
\end{align}

\begin{lemma}\label{commonmetric}
  Under the hypothesis of Lemma \ref{nabla}, there exists locally a coordinate system $(x_1,x_2,x_3)$ in which
 $<\nabla_{E_2}E_2,E_1>=<\nabla_{E_3}E_3,E_1>=-\frac{H}{2}$ depends only on $x_1$;
 \end{lemma}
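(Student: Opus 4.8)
The plan is to reduce the statement to the single infinitesimal fact $E_3H=0$ and then read it off in the coordinates of (\ref{startmetric}). By Lemma \ref{nabla} the quantity in question equals $\langle\nabla_{E_2}E_2,E_1\rangle=\langle\nabla_{E_3}E_3,E_1\rangle=-\tfrac{H}{2}$, and in the coordinate system (\ref{startmetric}) the leaves of the distribution $E_{23}$ spanned by $E_2,E_3$ are exactly the level sets of $x_1$, since $E_2=\tfrac{1}{\sqrt{g_{22}}}\partial_2$ and $E_3=\tfrac{1}{\sqrt{g_{33}}}\partial_3$ are tangent to them. Thus $-\tfrac{H}{2}$ depends only on $x_1$ precisely when $H$ is constant along each such leaf, i.e. when $E_2H=E_3H=0$. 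The relation $E_2H=0$ is already recorded after Lemma \ref{nabla}, so the whole content is to prove $E_3H=0$.

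First I would exploit that the $\lambda_2$-eigenspace $E_{23}$ is a totally umbilic distribution. From the connection table in Lemma \ref{nabla}, the second fundamental form of a leaf with respect to the unit normal $E_1$ satisfies $\mathrm{II}(E_2,E_2)=\mathrm{II}(E_3,E_3)=-\tfrac{H}{2}$ and $\mathrm{II}(E_2,E_3)=\langle\nabla_{E_2}E_3,E_1\rangle=0$; hence $\mathrm{II}=\mu\, g|_{E_{23}}$ with umbilicity factor $\mu=-\tfrac{H}{2}$. For an umbilic leaf the Codazzi--Mainardi equation reduces (because the induced connection kills the induced metric) to $(E_i\mu)\langle E_j,E_k\rangle-(E_j\mu)\langle E_i,E_k\rangle=\pm\langle R(E_i,E_j)E_k,E_1\rangle$ for $E_i,E_j,E_k\in\{E_2,E_3\}$. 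Taking $(E_i,E_j,E_k)=(E_2,E_3,E_3)$ and $(E_2,E_3,E_2)$ gives $E_2\mu=\pm\langle R(E_2,E_3)E_3,E_1\rangle$ and $E_3\mu=\mp\langle R(E_2,E_3)E_2,E_1\rangle$, so the sign convention is immaterial once the right-hand sides are shown to vanish.

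The crux is exactly that vanishing. Because $\{E_i\}$ is a Ricci-eigen frame, the Ricci tensor is diagonal, and in dimension three $R_{ijkl}$ is a universal expression in $R_{ij}$, $g_{ij}$ and $R$; with a diagonalizing orthonormal frame every term occurring in $\langle R(E_2,E_3)E_3,E_1\rangle$ and $\langle R(E_2,E_3)E_2,E_1\rangle$ carries a vanishing off-diagonal factor $R_{ab}$ or $g_{ab}$ with $a\ne b$, so both components are $0$. Therefore $E_2\mu=E_3\mu=0$, that is $E_3H=0$. Combined with $E_2H=0$ this shows $H$ is constant on each leaf $\{x_1=\text{const}\}$, and hence $-\tfrac{H}{2}=\langle\nabla_{E_2}E_2,E_1\rangle=\langle\nabla_{E_3}E_3,E_1\rangle$ is a function of $x_1$ alone in the coordinates (\ref{startmetric}), which are the coordinates asserted by the lemma.

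I expect the only delicate point to be this vanishing of the mixed curvature terms; I would justify it either through the cited three-dimensional identity for $R_{ijkl}$ or, staying entirely within the present notation, by a direct computation from the brackets of Lemma \ref{nabla}. Indeed the latter gives a curvature-free shortcut: the Jacobi identity for $(E_1,E_2,E_3)$ applied to $[E_1,E_2]=-\tfrac{H}{2}E_2$, $[E_2,E_3]=-\Gamma_{22}^3E_2$, $[E_3,E_1]=\Gamma_{11}^3E_1+\tfrac{H}{2}E_3$ produces $E_1\Gamma_{22}^3+\tfrac{H}{2}(\Gamma_{22}^3-\Gamma_{11}^3)=-\tfrac{1}{2}E_3H$, and comparing this with the relation $E_1\Gamma_{22}^3+\tfrac{H}{2}(\Gamma_{22}^3-\Gamma_{11}^3)=0$ recorded after Lemma \ref{nabla} yields $E_3H=0$ at once. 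Either route closes the argument.
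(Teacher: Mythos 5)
Your first route is correct and is essentially the paper's own argument: the paper likewise observes that the leaves of $E_{23}$ are totally umbilic with umbilicity factor $-\frac{H}{2}$, and then derives $E_3H=0$ from the (traced) Codazzi--Mainardi equation, the relevant curvature term being $Rc(E_1,E_3)=0$ because the frame is Ricci-eigen. Your untraced variant, with $R_{2331}=R_{2321}=0$ justified by the three-dimensional expression of $R_{ijkl}$ through $Rc$, $g$ and $R$, is the same idea made explicit, and it is sound.

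Your ``curvature-free shortcut,'' however, is circular and must be dropped. Your Jacobi computation is itself correct: the $E_2$-component of the Jacobi identity for $(E_1,E_2,E_3)$ gives $E_1\Gamma_{22}^3+\frac{H}{2}(\Gamma_{22}^3-\Gamma_{11}^3)=-\frac{1}{2}E_3H$. But precisely because of this, the relation $E_1\Gamma_{22}^3+\frac{H}{2}(\Gamma_{22}^3-\Gamma_{11}^3)=0$ displayed after Lemma \ref{nabla} cannot be a consequence of the Jacobi identity alone: it is the Jacobi identity \emph{combined with} the statement $E_3H=0$ that you are trying to prove. (The paper's attribution of that display solely to the Jacobi identity is loose; the display only becomes legitimate once the present lemma is established, and indeed the paper does not use it in this proof.) Comparing your correct Jacobi identity with that display therefore assumes the conclusion and proves nothing. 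The proposal stands, but on the first route alone.
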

\begin{proof}
  We start from the metric in (\ref{startmetric}).
First, we show that the leaves of $E_{23}$ are totally umbilic. We denote $\nabla_{\partial_i}\partial_j=:\gamma_{ij}^k\partial_k$. In this coordinate,
\begin{align*}
  <\nabla_{\partial_i}\partial_j,E_1>=\sqrt{g_{11}}\gamma_{ij}^1=-\frac{\partial_1g_{ij}}{2\sqrt{g_{11}}}
\end{align*}
for $i,j=2,3$. Since $g_{ij}=0$ for $i\neq j$ and (\ref{gammacord}), we can say that $<\nabla_{\partial_i}\partial_j,E_1>=-\frac{H}{2}g_{ij}$.
Thus, $E_{23}$ is totally umbilic. Further, we can see that $E_3H=0$ by taking the trace of the Codazzi-Mainardi equation \cite{CMM}.
\end{proof}

Now we prove Theorem \ref{metricthm1}.

 \begin{pf1}
    Computing $\Gamma_{11}^2=<\nabla_{E_1}E_1,E_2>=0$ in the coordinates in (\ref{startmetric}) gives $\Gamma_{11}^2=-\frac{\partial_2g_{11}}{2g_{11}\sqrt{g_{22}}}=0$.
  Thus, we get $\partial_2g_{11}=0$. Similarly, by calculating $\Gamma_{32}^3=0$, we get $\partial_2g_{33}=0$. By (\ref{gammacord}), $\frac{\partial_1g_{22}}{g_{22}}=\frac{\partial_1g_{33}}{g_{33}}$.
  Thus, $g_{22}=k(x_2,x_3)g_{33}$ for a function $k(x_2,x_3)$. However, note that $\partial_3\{\ln k(x_2,x_3)\}=\frac{\partial_3g_{22}}{g_{22}}-\partial_3(\ln g_{33})=-2\Gamma_{22}^3\sqrt{g_{33}}-\partial_3(\ln g_{33}).$
  Then, $\partial_2\partial_3(\ln k)=0$, which means that $k(x_2,x_3)=q(x_2)v(x_3)$ for functions $v$ and $q$. By replacing $x_2$ with a new variable, which we still denote by $x_2$, we can replace $q(x_2)dx_2^2$ with $dx_2^2$. Then we get the result.
 \end{pf1}

  \begin{lemma}\label{23second}
    Under the hypothesis of Lemma \ref{nabla}, the following statements hold;

   {\rm (i)} In the local coordinate system in Theorem \ref{metricthm1}, $\partial_3f=c_1(x_1)g_{33}\sqrt{v}$ for a  function $c_1\neq0$;

   {\rm (ii)} For the metric $g$ in Theorem \ref{metricthm1}, if $\Gamma_{11}^3=0$, then $g_{11}=1$, $g_{33}=e^{\int_c^{x_1}H(u)du}$. Otherwise, $\partial_3g_{11}=c_2(x_1)g_{33}\sqrt{g_{11}\cdot v}$ for a function $c_2\neq 0$.

  \end{lemma}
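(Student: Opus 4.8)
The plan is to reduce each assertion to a first-order ODE in $x_3$ whose right-hand side is recognized as $\partial_3\ln\sqrt{g_{22}}$, so that the relevant quotient becomes a function of $x_1$ alone. For part (i), I would first record the diagonal Hessian components in the adapted frame. Writing $f_1=E_1f$ and $f_3=E_3f$ and using the covariant derivatives of Lemma \ref{nabla} together with $E_2f=0$, equation (\ref{ggrsdef}) gives $\nabla df(E_2,E_2)=\frac{H}{2}f_1-\Gamma_{22}^3f_3=\psi\lambda_2+\phi$ and $\nabla df(E_3,E_3)=E_3f_3+\frac{H}{2}f_1=\psi\lambda_3+\phi$. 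Subtracting these and using $\lambda_2=\lambda_3$ cancels both the $\frac{H}{2}f_1$ terms and the right-hand sides, leaving the clean identity $E_3f_3=-\Gamma_{22}^3f_3$. Substituting $\Gamma_{22}^3=-\frac{E_3g_{22}}{2g_{22}}$ from (\ref{gammacord}) and $E_3=g_{33}^{-1/2}\partial_3$ turns this into $\partial_3\ln f_3=\partial_3\ln\sqrt{g_{22}}$, so $f_3/\sqrt{g_{22}}$ is independent of $x_3$; since $E_2f=0$ forces $f=f(x_1,x_3)$, it is also independent of $x_2$, hence equals some $c_1(x_1)$. Unwinding $f_3=g_{33}^{-1/2}\partial_3f$ and $g_{22}=g_{33}v$ yields $\partial_3f=c_1(x_1)g_{33}\sqrt{v}$, with $c_1\neq0$ because $E_3f\neq0$.

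For part (ii), suppose first $\Gamma_{11}^3=0$. Then $\Gamma_{11}^3=-\frac{E_3g_{11}}{2g_{11}}$ forces $\partial_3g_{11}=0$, so $g_{11}=g_{11}(x_1)$; reparametrizing $x_1$ by $d\tilde{x}_1=\sqrt{g_{11}}\,dx_1$ normalizes $g_{11}=1$ while keeping the metric in the form (\ref{startmetric22}). Now (\ref{gammacord}) gives $H=\partial_1\ln g_{33}$, and since $H=H(x_1)$ by Lemma \ref{commonmetric} I would integrate to obtain $g_{33}=e^{\int_c^{x_1}H(u)du}\,w(x_3)$ for some function $w$; a further change $d\tilde{x}_3=\sqrt{w}\,dx_3$ absorbs $w(x_3)$ and leaves $g_{33}=e^{\int_c^{x_1}H(u)du}$ (with $v$ updated accordingly).

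When $\Gamma_{11}^3\neq0$ the argument parallels part (i), with $g_{11}$ playing the role of $f$. I would set $Q:=\frac{\partial_3g_{11}}{g_{33}\sqrt{g_{11}v}}$ and, using $\partial_3g_{11}=-2g_{11}\sqrt{g_{33}}\,\Gamma_{11}^3$, rewrite $Q=-2\sqrt{g_{11}}\,\Gamma_{11}^3/\sqrt{g_{33}v}$. Computing $\partial_3\ln|Q|$ termwise and feeding in $\frac{\partial_3g_{11}}{g_{11}}=-2\sqrt{g_{33}}\Gamma_{11}^3$, the curvature relation (\ref{r12211331}) in the form $\frac{\partial_3\Gamma_{11}^3}{\Gamma_{11}^3}=\sqrt{g_{33}}(\Gamma_{11}^3-\Gamma_{22}^3)$, and $\frac{\partial_3g_{33}}{g_{33}}=-2\sqrt{g_{33}}\Gamma_{22}^3-\frac{v'}{v}$ (the last coming from $\Gamma_{22}^3$ and $g_{22}=g_{33}v$), all the terms cancel to give $\partial_3\ln|Q|=0$. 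Hence $Q$ depends only on $x_1$, i.e. $\partial_3g_{11}=c_2(x_1)g_{33}\sqrt{g_{11}v}$ with $c_2\neq0$.

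The hard part will be the exact cancellation in the case $\Gamma_{11}^3\neq0$: it relies on using the curvature identity (\ref{r12211331}), which itself encodes $\lambda_2=\lambda_3$, to control $\partial_3\Gamma_{11}^3$, and on careful bookkeeping of the $v'/v$ contributions arising from $g_{22}=g_{33}v$. A secondary point requiring attention is verifying that the two reparametrizations used in the case $\Gamma_{11}^3=0$ preserve the normal form (\ref{startmetric22}) and do not reintroduce any $x_2$-dependence.
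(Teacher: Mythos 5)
Your proposal is correct and follows essentially the same route as the paper: part (i) comes from the Hessian identity $\nabla df(E_2,E_2)=\nabla df(E_3,E_3)$ combined with (\ref{gammacord}), the $\Gamma_{11}^3=0$ case from (\ref{gammacord}) plus the two coordinate reparametrizations, and the $\Gamma_{11}^3\neq0$ case from integrating (\ref{r12211331}) against (\ref{gammacord}). Your reformulation of the last step as $\partial_3\ln|Q|=0$ for $Q=\partial_3 g_{11}/(g_{33}\sqrt{g_{11}v})$ is just a repackaging of the paper's integration of $\partial_3\ln\Gamma_{11}^3=\tfrac{1}{2}\partial_3\ln(g_{22}/g_{11})$, and the cancellation you flag as the hard part does indeed go through.
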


\begin{proof}

Consider the local coordinate system in Theorem \ref{metricthm1}.
From $\nabla df(E_2,E_2)=\nabla df(E_3,E_3)$, we have $E_3E_3f+\Gamma_{22}^3(E_3f)=0$. Since $E_3f\neq 0$, by (\ref{gammacord}), $\frac{\partial_3(E_3f)}{E_3f}-\frac{\partial_3g_{22}}{2g_{22}}=0$; then, we get
$\partial_3f=c_1(x_1)\sqrt{g_{22}g_{33}}=c_1(x_1)g_{33}\sqrt{v}$ for a function $c_1$ by integration.

 Suppose that $\Gamma_{11}^3=0$. By (\ref{gammacord}), $\partial_3g_{11}=0$; thus, $g_{11}=g_{11}(x_1)$. By replacing $g_{11}(x_1)dx_1^2$ with $dx_1^2$, we may write $g=dx_1^2+g_{33}vdx_2^2+g_{33}dx_3^2$.
 Then, by (\ref{gammacord}), $H(x_1)=\partial_1(\ln g_{33})$. Thus, $g_{33}=k(x_3)e^{\int_c^{x_1}H(u)du}$ for a function $k$. By changing the variable, the metric $g$ can be written as
 \begin{align*}
  g= dx_1^2+e^{\int_c^{x_1}H(u)du}vdx_2^2+e^{\int_c^{x_1}H(u)du}dx_3^2.
 \end{align*}
 Now, suppose that $\Gamma_{11}^3\neq0$. By (\ref{r12211331}) and (\ref{gammacord}), $\partial_3(\ln \Gamma_{11}^3)=-\frac{1}{2}\partial_3(\ln g_{11})+\frac{1}{2}\partial_3(\ln g_{22})$. Thus, $\Gamma_{11}^3=\sqrt{\frac{g_{33}\cdot v}{g_{11}}}\cdot c(x_1)$ for a nonzero function $c$. Then, we can get $\partial_3g_{11}=c_2(x_1)g_{33}\sqrt{g_{11}\cdot v}$ for a nonzero function $c_2$ again by (\ref{gammacord}).
\end{proof}

To analyze the metric obtained from the above lemma in greater detail, we need a Codazzi tensor, as mentioned in the Introduction.

\begin{lemma}\label{existcodazzi}
   Let $(M^3,g,f)$ be a three-dimensional Riemannian manifold satisfying (\ref{ggrsdef}). Then $C:=a Rc+b g$ is a Codazzi tensor if and only if two functions $a$ and $b$ satisfy the following conditions:
   \begin{align}
     &\frac{\nabla_i a}{a}=\frac{\nabla_i\psi+\nabla_if}{\psi}\label{acond}\\
     &\frac{b_i}{a}=\frac{1}{\psi}\left\{\frac{1}{2}R_{il}(\nabla_l\psi+\nabla_lf)-\frac{R}{2}(\nabla_i\psi+\nabla_if)-\frac{\nabla_iR}{4}\psi\right\}\label{bcond}
   \end{align}
\end{lemma}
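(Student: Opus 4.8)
The plan is to characterize when $C := aRc + bg$ is Codazzi, meaning $\nabla_i C_{jk} = \nabla_j C_{ik}$, by directly expanding the Codazzi condition and matching it against the symmetry relation already established in Lemma \ref{basiccodazzi}(ii). First I would write out $\nabla_i C_{jk} - \nabla_j C_{ik} = (\nabla_i a)R_{jk} - (\nabla_j a)R_{ik} + a(\nabla_i R_{jk} - \nabla_j R_{ik}) + (\nabla_i b)g_{jk} - (\nabla_j b)g_{ik}$, using that $a,b$ are scalar functions so their covariant derivatives are just gradients. The goal is to show this vanishes for all $i,j,k$ precisely when \eqref{acond} and \eqref{bcond} hold.

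The key move is to eliminate the term $\nabla_i R_{jk} - \nabla_j R_{ik}$ using the equation derived in the proof of Lemma \ref{basiccodazzi}, namely
\[
  \psi(\nabla_i R_{jk} - \nabla_j R_{ik}) = -(\nabla_i\psi)R_{jk} + (\nabla_j\psi)R_{ik} - (\nabla_i\phi)g_{jk} + (\nabla_j\phi)g_{ik} - R_{ijkl}\nabla_l f.
\]
Multiplying the Codazzi difference by $\psi$ and substituting this in, the $a(\nabla_i R_{jk} - \nabla_j R_{ik})$ term gets replaced by the right-hand side scaled by $a/\psi$, so the coefficient of $R_{jk}$ becomes $\psi\nabla_i a - a\nabla_i\psi$, which is exactly what condition \eqref{acond} forces to equal $a\nabla_i f$ (i.e. $\psi\nabla_i a = a(\nabla_i\psi + \nabla_i f)$). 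I would then use the three-dimensional curvature identity $R_{ijkl}\nabla_l f = -(R_{ik}\nabla_j f + R_{jl}g_{ik}\nabla_l f - R_{il}g_{jk}\nabla_l f - R_{jk}\nabla_i f) + \frac{R}{2}(g_{ik}\nabla_j f - g_{jk}\nabla_i f)$ from the earlier computation to convert everything into terms proportional to $R_{jk}$, $R_{ik}$, $g_{jk}$, $g_{ik}$, and the Ricci-contracted gradients $R_{il}\nabla_l f$, $R_{jl}\nabla_l f$. Under \eqref{acond}, I expect all the $R_{jk}$ and $R_{ik}$ terms to cancel by the antisymmetry in $i \leftrightarrow j$, leaving only the $g_{jk}$ and $g_{ik}$ terms; collecting the coefficient of $g_{jk}$ (the $g_{ik}$ coefficient being its $i \leftrightarrow j$ swap) should yield exactly condition \eqref{bcond} after substituting the expression for $\nabla_i\phi$ from Lemma \ref{basiccodazzi}(i).

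For the converse I would simply read the computation backwards: assuming \eqref{acond} and \eqref{bcond}, each grouped coefficient vanishes, so the full Codazzi difference is zero. The main obstacle I anticipate is purely bookkeeping: the curvature term $R_{ijkl}\nabla_l f$ expands into several pieces, and I must track carefully which contractions such as $R_{il}\nabla_l f$ survive and confirm that the coefficient of $g_{jk}$ indeed reproduces $\frac{1}{2}R_{il}(\nabla_l\psi + \nabla_l f) - \frac{R}{2}(\nabla_i\psi + \nabla_i f) - \frac{\nabla_i R}{4}\psi$ once the $\nabla_i\phi$ substitution is made. Since $b$ enters only through $\nabla_i b\, g_{jk} - \nabla_j b\, g_{ik}$, matching the surviving $g_{jk}$-coefficient to $\nabla_i b$ directly gives \eqref{bcond}, so the algebra, while delicate, is guided at every step by the identities already proved in Lemma \ref{basiccodazzi}.
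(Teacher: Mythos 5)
Your proposal is correct and takes essentially the same route as the paper: the paper expands $\nabla_i C_{jk}-\nabla_j C_{ik}$ and matches the coefficients of $Rc$ and $g$ against the identity in Lemma \ref{basiccodazzi}(ii), whereas you inline the derivation of that identity (the Ricci identity, the three-dimensional decomposition of $R_{ijkl}\nabla_l f$, and the substitution of $\nabla_i\phi$ from Lemma \ref{basiccodazzi}(i)), so the resulting algebra and the final coefficient comparison yielding \eqref{acond} and \eqref{bcond} are identical.
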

\begin{proof}
  Let $C:=a Rc+bg$ be a Codazzi tensor. Then $0=\nabla_iC_{jk}-\nabla_jC_{ik}=a(\nabla_iR_{jk}-\nabla_jR_{ik})+(\nabla_ia)R_{jk}+(\nabla_ib)R_{jk}-(\nabla_ja)R_{ik}-(\nabla_jb)g_{ik}$. So we have
  \begin{align*}
    \nabla_iR_{jk}-\nabla_jR_{ik}=\frac{\nabla_ja}{a}R_{ik}-\frac{\nabla_ia}{a}R_{jk}+\frac{\nabla_jb}{a}g_{ik}-\frac{\nabla_ib}{a}g_{jk}.
  \end{align*}
  By (ii) in Lemma \ref{basiccodazzi},
  \begin{align*}
    \psi(\nabla_iR_{jk}-\nabla_jR_{ik})=(R_{ik}-\frac{R}{2}g_{ik})(\nabla_j\psi+\nabla_jf)-(R_{jk}-\frac{R}{2}g_{jk})(\nabla_i\psi+\nabla_if)\\
    +\frac{1}{2}R_{jl}(\nabla_l\psi+\nabla_lf)g_{ik}
    -\frac{1}{2}R_{il}(\nabla_l\psi+\nabla_lf)g_{jk}-\psi(\frac{\nabla_jR}{4}g_{ik}-\frac{\nabla_iR}{4}g_{jk}).
  \end{align*}
  Therefore, by comparing the coefficients of $Rc$ and $g$ in the above two equations, we get the desired result.

   For the converse part, one can easily check that if $a$ and $b$ satisfy (\ref{acond}) and (\ref{bcond}), then $aRc+bg$ is a Codazzi tensor.
\end{proof}

In general, it is not easy to obtain $C$ explicitly. However, in subsequent sections,
we will show that if $(M^3,g,f)$ is one of the manifolds listed in the Introduction, then $C$ can be obtained explicitly.

\begin{lemma}\cite{De} \label{derdlem}
For a Codazzi tensor $C$ on a Riemannian manifold $M$,
in each connected component of $M_C:=\{x\in M|$ the number of distinct eigenvalues of $C_x$ is constant in a nbd of $x$ $\}$.

{\rm (i)} $M_C$ is an open dense subset of $M$ and that in each connected component of $M_C$

{\rm (ii)} Given distinct eigenfunctions $\lambda, \mu$ of $C$ and local vector fields $v, u$ such that  $C v = \lambda v$, $Cu = \mu u$ with $|u|=1$, it holds that

$ \ \ \ \ \  v(\mu) = (\mu - \lambda) <\nabla_u u, v > $.

{\rm (iii)} For each eigenfunction $\lambda$, the $\lambda$-eigenspace distribution is integrable and its leaves are totally umbilic submanifolds of $M$.

{\rm (iv)} If $\lambda$-eigenspace $V_{\lambda}$ has dimension bigger than one, the eigenfunction $\lambda$ is constant along the leaves of $V_{\lambda}$.

{\rm (v)} Eigenspaces of $C$ form mutually orthogonal differentiable distributions.
\end{lemma}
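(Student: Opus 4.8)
The plan is to derive every assertion from the single defining identity of a Codazzi tensor, $(\nabla_X C)(Y,Z)=(\nabla_Y C)(X,Z)$, together with the fact that $C$, being symmetric, is pointwise orthogonally diagonalizable with real eigenvalues. I would dispose of the two analytic statements (v) and (i) first. Pointwise orthogonality of the eigenspaces in (v) is immediate from symmetry; on a connected component of $M_C$ the number of distinct eigenvalues is constant, so each eigenvalue is a simple root of the smoothly varying characteristic polynomial of constant multiplicity, and the associated spectral projection can be written as a contour integral of the resolvent $(zI-C)^{-1}$, which depends smoothly on $C$. Hence the eigenvalue functions and the eigenspace distributions are differentiable, giving (v). For (i), openness of $M_C$ is built into its definition, and density follows from lower semicontinuity of the function $N$ counting the number of distinct eigenvalues of $C$: eigenvalues depend continuously on the point and distinctness is an open condition, so $N$ cannot drop under small perturbation. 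On any open set $U$ the bounded integer-valued function $N$ attains a maximum $k$ at some $x^\ast$, and semicontinuity forces $N\equiv k$ on a neighborhood of $x^\ast$; that neighborhood lies in $M_C$, so $M_C$ meets every open set.

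The computational core is (ii). Fix $u,v$ with $Cu=\mu u$, $Cv=\lambda v$, $|u|=1$ and $\lambda\neq\mu$, so $\langle u,v\rangle=0$. Expanding and using $|u|=1$ gives $(\nabla_v C)(u,u)=v(\mu)$, since $C(\nabla_v u,u)=\mu\langle\nabla_v u,u\rangle=\tfrac{\mu}{2}\,v(|u|^2)=0$. On the other hand, using $\langle u,v\rangle=0$ together with $\langle u,\nabla_u v\rangle=-\langle\nabla_u u,v\rangle$, one computes $(\nabla_u C)(u,v)=(\mu-\lambda)\langle\nabla_u u,v\rangle$. The Codazzi identity equates $(\nabla_v C)(u,u)$ with $(\nabla_u C)(v,u)=(\nabla_u C)(u,v)$, the last equality holding because $\nabla_u C$ is symmetric; this yields (ii).

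For (iii) and (iv) I would reuse the same expansion. With $u,w\in V_\mu$ and $v\in V_\lambda$, $\lambda\neq\mu$, the computation in (ii) generalizes to $(\nabla_u C)(w,v)=(\mu-\lambda)\langle\nabla_u w,v\rangle$, and Codazzi symmetry in the first two arguments gives $\langle\nabla_u w,v\rangle=\langle\nabla_w u,v\rangle$; hence $\langle[u,w],v\rangle=0$ for every $v$ orthogonal to $V_\mu$, and since the eigenspaces span the tangent space this forces $[u,w]\in V_\mu$, proving integrability. For umbilicity the key observation, extracted from (ii), is that $\langle\nabla_u u,v\rangle=v(\mu)/(\mu-\lambda)$ is independent of the choice of \emph{unit} vector $u\in V_\mu$; a symmetric bilinear form constant on the unit sphere of $V_\mu$ is a scalar multiple of the induced metric by polarization, so the second fundamental form of each leaf is umbilic. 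For (iv), assume $\dim V_\mu\ge 2$ and take orthonormal $u,w\in V_\mu$; expanding gives $(\nabla_w C)(u,u)=w(\mu)$, while every term of $(\nabla_u C)(w,u)$ cancels using $\langle u,w\rangle=0$ and $|u|=1$, so Codazzi yields $w(\mu)=0$. Letting $w$ range over $V_\mu$ shows $\mu$ is constant along the leaves.

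I expect the genuinely delicate points to be the analytic claims (i) and (v)—the semicontinuity argument for density and the functional-calculus argument for smoothness of the eigendistributions—rather than the Codazzi computations, which are mechanical. The one non-obvious algebraic step is the independence of $\langle\nabla_u u,v\rangle$ from the unit vector $u\in V_\mu$ noted above, which is precisely what upgrades the pointwise identity of (ii) into umbilicity of the whole leaf in (iii).
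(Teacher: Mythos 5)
Your proposal is correct, but there is nothing internal to compare it against: the paper does not prove this lemma at all, it quotes it from Derdzi\'nski \cite{De} (the result is also Besse, \emph{Einstein Manifolds}, 16.11). Your blind reconstruction is essentially the classical argument of that reference, and every step checks out. The computations $(\nabla_v C)(u,u)=v(\mu)$ and $(\nabla_u C)(v,u)=(\mu-\lambda)\langle\nabla_u u,v\rangle$ are right and give (ii); the generalization $(\nabla_u C)(w,v)=(\mu-\lambda)\langle\nabla_u w,v\rangle$ plus Codazzi symmetry in the first two slots gives $\langle[u,w],v\rangle=0$ and hence integrability; the key observation that $\langle\nabla_u u,v\rangle=v(\mu)/(\mu-\lambda)$ does not depend on the unit vector $u\in V_\mu$, combined with polarization and the symmetry of the second fundamental form (which itself uses the integrability just proved), upgrades (ii) to umbilicity; and the cancellation $(\nabla_u C)(w,u)=-\mu\,u\langle w,u\rangle=0$ for orthonormal $u,w\in V_\mu$ gives (iv). The analytic parts are also sound: openness of $M_C$ is definitional, density follows from lower semicontinuity of the eigenvalue count (on any open set the maximal count is attained and is locally constant there), and smoothness of the eigenvalue functions and eigendistributions follows because constancy of the number of distinct eigenvalues forces local constancy of the multiplicities (eigenvalue clusters confined to disjoint intervals can neither merge nor split), after which the contour-integral representation of the spectral projections applies. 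The one logical point worth stating explicitly in a write-up is that the computations in (ii)--(iv) presuppose differentiable local eigenvector fields, so (v) must be established first, as your ordering indeed arranges; what your self-contained proof buys over the paper's bare citation is precisely that the reader sees all five assertions flow from the single identity $(\nabla_X C)(Y,Z)=(\nabla_Y C)(X,Z)$.
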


In Lemma \ref{derdlem}, (ii) states that it holds not only for distinct eigenfunctions but also for the same eigenfunction as long as the eigenvectors are orthogonal, i.e., $Cv=\lambda v,Cu=\lambda u,u\perp v$.

In general, a Riemannian manifold satisfying (\ref{ggrsdef}) is not real analytic, but if $(M,g)$ is one of the spaces stated in the Introduction, i.e., gradient soliton, $V$-static, or critical point metric, then we can show that $(M,g)$ is real analytic in harmonic coordinates; see \cite{HPW} or \cite{CM2}. Thus, if $f$ is not a constant, then $M_C\cap \{\nabla f\neq0\}$ is open and dense in $M$.

\begin{lemma}\label{giftlem}
  Let $(M^3,g,f)$ be a three-dimensional real analytic Riemannian manifold satisfying (\ref{ggrsdef}) with $\lambda_1\neq \lambda_2=\lambda_3$. Consider an adapted frame field $\{E_i\}$ in an open subset of $\{\nabla f\neq0\}$.
  Suppose that $E_3f\neq0$ and there exists a Codazzi tensor $\mathcal{C}$ whose eigenspaces coincide with  the eigenspace of the Ricci tensor. Let $\mu_i$ be an eigenvalue of $\mathcal{C}$ for $i=1,2,3$. Then, either $\mu_2$ is a constant or $\sqrt{g_{11}}(\mu_2-\mu_1)=c_3(x_1)$ for a positive function $c_3$.
 \end{lemma}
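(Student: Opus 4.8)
The plan is to combine Derdzinski's structure identities for Codazzi tensors (Lemma \ref{derdlem}) with the explicit connection coefficients of Lemma \ref{nabla}, and then read everything off in the coordinates of (\ref{startmetric}). As a preliminary reduction, note that since the eigenspaces of $\mathcal{C}$ coincide with those of $Rc$, and $Rc$ has precisely the two eigenspaces $\mathbb{R}E_1$ and $\mathrm{span}\{E_2,E_3\}$, the tensor $\mathcal{C}$ must act as a single scalar on the two-dimensional eigenspace; hence $\mu_2=\mu_3$ and $\mathcal{C}=aRc+bg$ for functions $a,b$ with $\mu_1=a\lambda_1+b$ and $\mu_2=a\lambda_2+b$. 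Set $w:=\mu_2-\mu_1=a(\lambda_2-\lambda_1)$; by Lemma \ref{existcodazzi}, $a$ obeys (\ref{acond}).

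Next I extract first-order information on the $\mu_i$ from Lemma \ref{derdlem}(ii) and the remark following it (which permits equal eigenvalues provided the eigenvectors are orthogonal), evaluating each inner product $\langle\nabla_u u,v\rangle$ by Lemma \ref{nabla}. With $v=E_3$ (eigenvalue $\mu_2$) and $u=E_1$ (eigenvalue $\mu_1$) I get $E_3\mu_1=(\mu_1-\mu_2)\langle\nabla_{E_1}E_1,E_3\rangle=-w\,\Gamma_{11}^3$; with $v=E_2,u=E_1$ I get $E_2\mu_1=0$ since $\nabla_{E_1}E_1=\Gamma_{11}^3E_3\perp E_2$; and with $v=E_2,u=E_3$ and $v=E_3,u=E_2$ I get $E_2\mu_2=E_3\mu_2=0$. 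Consequently $E_2w=0$ and $E_3w=w\,\Gamma_{11}^3$. I also record $E_1\mu_2=(\mu_2-\mu_1)\langle\nabla_{E_2}E_2,E_1\rangle=-w\tfrac{H}{2}$ from $v=E_1,u=E_2$, to be used only in the degenerate case.

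I then pass to the coordinates of (\ref{startmetric}), where $E_i=\partial_i/\sqrt{g_{ii}}$. The proof of Theorem \ref{metricthm1} supplies $\partial_2g_{11}=0$, and (\ref{gammacord}) gives $\Gamma_{11}^3=-\partial_3g_{11}/(2g_{11}\sqrt{g_{33}})$, i.e. $\partial_3\sqrt{g_{11}}=-\sqrt{g_{11}g_{33}}\,\Gamma_{11}^3$. Therefore $\partial_2(\sqrt{g_{11}}\,w)=0$, while, using $\partial_3w=\sqrt{g_{33}}\,E_3w=\sqrt{g_{33}}\,w\,\Gamma_{11}^3$, one finds $\partial_3(\sqrt{g_{11}}\,w)=(\partial_3\sqrt{g_{11}})\,w+\sqrt{g_{11}}\,\partial_3w=-\sqrt{g_{11}g_{33}}\,\Gamma_{11}^3w+\sqrt{g_{11}g_{33}}\,w\,\Gamma_{11}^3=0$. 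Thus $\sqrt{g_{11}}\,w$ depends only on $x_1$, which is the asserted $c_3(x_1)$ satisfying $\sqrt{g_{11}}(\mu_2-\mu_1)=c_3(x_1)$.

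Finally I treat the dichotomy. If $w\equiv0$ then $\mathcal{C}=bg$ is a scalar Codazzi tensor, which forces $b=\mu_2$ to be constant (equivalently $E_1\mu_2=-w\tfrac{H}{2}=0$ together with $E_2\mu_2=E_3\mu_2=0$ yields $\nabla\mu_2=0$). If $w\not\equiv0$ then $a\not\equiv0$, and since (\ref{acond}) exhibits $\nabla(\log a)$ as a gradient, $a$ never vanishes where it is defined; as $\lambda_2-\lambda_1$ is nowhere zero, $w=a(\lambda_2-\lambda_1)$ has constant sign, and after replacing $\mathcal{C}$ by $-\mathcal{C}$ if necessary we obtain $c_3>0$. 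I expect the main obstacle to be the bookkeeping of selecting the correct eigenvector pairs $(v,u)$ in Derdzinski's identity so that the coefficients from Lemma \ref{nabla} combine into the exact cancellation in $\partial_3(\sqrt{g_{11}}\,w)$; the concluding sign analysis is comparatively routine.
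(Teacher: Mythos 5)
Your proof is correct, and it gets to the conclusion by a genuinely more direct route than the paper's. The paper invokes Lemma \ref{derdlem}(ii) only for pairs inside, or landing on, the two-dimensional eigenspace: it records $E_3\mu_2=0$ and the umbilicity relation $-\frac{H}{2}=\langle\nabla_{E_2}E_2,E_1\rangle=\frac{E_1\mu_2}{\mu_2-\mu_1}$ (equation (\ref{e1f0})), and then must differentiate the latter in the $E_2,E_3$ directions, using $E_jH=0$ (Lemma \ref{commonmetric}) and the commutator identities from Lemma \ref{nabla}, to reach the product equation $(E_1\mu_2)\{E_j\mu_1+\Gamma_{11}^j(\mu_2-\mu_1)\}=0$ of (\ref{codazzigift}); the ``either/or'' in the statement is precisely the two factors of that product. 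You instead apply Derdzi\'{n}ski's identity once more with $v=E_3$, $u=E_1$ (distinct eigenfunctions $\mu_2\neq\mu_1$), which yields the first-order relation $E_3\mu_1=(\mu_1-\mu_2)\Gamma_{11}^3$ outright; combined with $E_2\mu_1=E_2\mu_2=E_3\mu_2=0$, $\partial_2g_{11}=0$, and $\Gamma_{11}^3=-\partial_3g_{11}/(2g_{11}\sqrt{g_{33}})$ from (\ref{gammacord}), this integrates at once to $\sqrt{g_{11}}(\mu_2-\mu_1)=c_3(x_1)$, with no second derivatives and no case split. What your approach buys is a strictly stronger statement: the second alternative holds unconditionally (one can check it is also satisfied in the paper's ``constant $\mu_2$'' cases), which is fully consistent with how Lemma \ref{giftlem} is later used in Lemmas \ref{gamma113} and \ref{metriclemtam}; what the paper's route buys is, arguably, nothing beyond staying closer to the umbilicity picture it uses elsewhere.

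Two minor caveats. Your justification of the sign of $c_3$ via (\ref{acond}) (``$a$ never vanishes where it is defined'') is circular as phrased, since (\ref{acond}) is itself derived only on the set where $a\neq0$; but the point is immediate anyway: for the eigenspaces of $\mathcal{C}$ to coincide with those of $Rc$ one needs $\mu_1\neq\mu_2$ pointwise (otherwise $\mathcal{C}$ is a multiple of $g$ there and has a single three-dimensional eigenspace), so $w=\mu_2-\mu_1$ is nowhere zero, has constant sign, and replacing $\mathcal{C}$ by $-\mathcal{C}$ gives $c_3>0$. Note the paper is equally casual here, dividing by $\mu_2-\mu_1$ in (\ref{e1f0}) without comment. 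For the same reason your terminal case $w\equiv0$ is vacuous; including it is harmless but unnecessary.
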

\begin{proof}
First, note that $E_3\mu_2=0$ from (ii) in the above lemma.
 Lemma \ref{nabla} and Lemma \ref{derdlem}  give the following:
\begin{eqnarray} \label{e1f0}
  -\frac{H}{2}=<\nabla_{E_2}E_2,E_1>=\frac{E_1\mu_2}{\mu_2-\mu_1}.
\end{eqnarray}
Thus,
\begin{align}
  E_j\left(\frac{E_1\mu_2}{\mu_2-\mu_1}\right)=\frac{E_jE_1\mu_2}{\mu_2-\mu_1}+\frac{(E_1\mu_2)(E_j\mu_1)}{(\mu_2-\mu_1)^2}=0 \label{noneigenst1}
\end{align}
for $j=2,3$. However, note that
\begin{align*}
  E_jE_1\mu_2=(\nabla_{E_j}E_1-\nabla_{E_1}E_j+E_1E_j)\mu_2=-(\nabla_{E_1}E_j)\mu_2.
\end{align*}
The second equality is due to the fact that $E_j\mu_2=0$ for $j=2,3$. Then, (\ref{noneigenst1}) can be expressed as
\begin{align}
    (E_1\mu_2)\{E_j\mu_1+\Gamma_{11}^j(\mu_2-\mu_1)\}=0. \label{codazzigift}
  \end{align}
 Since $E_3\mu_2=0$, if $E_1\mu_2=0$, then $\mu_2$ is a constant.
  Otherwise,
  \begin{align*}
    0=E_3\mu_1+\Gamma_{11}^3(\mu_2-\mu_1)=\frac{\partial_3\mu_1}{\sqrt{g_{33}}}-\frac{\partial_3g_{11}}{2g_{11}\sqrt{g_{33}}}(\mu_2-\mu_1).
  \end{align*}
 By integrating, we get $\sqrt{g_{11}}(\mu_2-\mu_1)=c_3(x_1)$ for a positive function $c_3$.
\end{proof}

\section{Gradient Ricci soliton with $E_3f\neq0$}

In this section, we consider gradient Ricci solitons with $E_3f\neq0$, i.e., $\psi=-1$ and $\phi=\lambda$ in (\ref{ggrsdef}). First, we recall
some basic properties of gradient Ricci solitons. Then, we observe that there exists a Codazzi tensor.

 \begin{lemma} \label{solitonformulas}
For any gradient Ricci soliton $(M,g,f)$, we have;

\smallskip
{\rm (i)} $\frac{1}{2} dR = R(\nabla f, \cdot ) $, where $R$ in the left hand side denotes the scalar curvature, and $R(\cdot, \cdot)$ is a Ricci tensor.

\smallskip
{\rm (ii)} $R + |\nabla f|^2 - 2\lambda f = constant$.
 \end{lemma}

\begin{lemma}
  Let $(M^3,g,f)$ be a three-dimensional gradient Ricci soliton. Then
  \begin{align}
    \mathcal{T}=e^{-f}(Rc-\frac{R}{2}g)
  \end{align}
 is a Codazzi tensor.
\end{lemma}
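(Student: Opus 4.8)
The plan is to verify the Codazzi condition by appealing directly to Lemma \ref{existcodazzi}, which in the soliton case $\psi=-1$, $\phi=\lambda$ tells us precisely which pairs $(a,b)$ make $a\,Rc+b\,g$ a Codazzi tensor. Comparing $\mathcal{T}=e^{-f}(Rc-\frac{R}{2}g)$ with the ansatz $C=a\,Rc+b\,g$, I would take $a=e^{-f}$ and $b=-\frac{R}{2}e^{-f}$, and then check that these two functions satisfy conditions (\ref{acond}) and (\ref{bcond}).

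First, since $\psi\equiv-1$ is constant we have $\nabla\psi=0$, so condition (\ref{acond}) reads $\frac{\nabla_i a}{a}=-\nabla_i f$, i.e. $\nabla_i(\ln a)=-\nabla_i f$. This is satisfied identically by $a=e^{-f}$, so the first condition costs nothing and in fact forces the exponential factor in $\mathcal{T}$.

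The substantive step is the $b$-condition (\ref{bcond}). With $\psi=-1$ and $\nabla\psi=0$, its right-hand side simplifies to $-\frac{1}{2}R_{il}\nabla_l f+\frac{R}{2}\nabla_i f-\frac{1}{4}\nabla_i R$. Here I would invoke the soliton identity of Lemma \ref{solitonformulas}(i), namely $R_{il}\nabla_l f=\frac{1}{2}\nabla_i R$, which turns the right-hand side into $\frac{R}{2}\nabla_i f-\frac{1}{2}\nabla_i R$. On the other hand, differentiating the proposed $b=-\frac{R}{2}e^{-f}$ directly gives $\nabla_i b=e^{-f}\bigl(\frac{R}{2}\nabla_i f-\frac{1}{2}\nabla_i R\bigr)$, so that $\frac{b_i}{a}=\frac{R}{2}\nabla_i f-\frac{1}{2}\nabla_i R$, matching the simplified right-hand side exactly. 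Thus (\ref{bcond}) holds, and Lemma \ref{existcodazzi} yields the claim.

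There is no serious obstacle here: the only thing to watch is the bookkeeping when specializing the general identities of Lemma \ref{existcodazzi} to $\psi=-1$, and the one genuine geometric input is the contracted identity $\frac{1}{2}dR=Rc(\nabla f,\cdot)$ for solitons. Alternatively, one could bypass Lemma \ref{existcodazzi} and compute $\nabla_i\mathcal{T}_{jk}-\nabla_j\mathcal{T}_{ik}$ by hand using the soliton equation together with the Ricci identity; but that route essentially reproves Lemma \ref{basiccodazzi}(ii) and is less economical, so I would favor the verification above.
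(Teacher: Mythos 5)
Your proposal is correct and follows exactly the paper's route: the paper's proof likewise consists of verifying that $a=e^{-f}$ and $b=-e^{-f}\frac{R}{2}$ satisfy conditions (\ref{acond}) and (\ref{bcond}) of Lemma \ref{existcodazzi} (citing \cite{CMM}), and your computation fills in the details of that check, correctly identifying the soliton identity $\frac{1}{2}dR=Rc(\nabla f,\cdot)$ from Lemma \ref{solitonformulas}(i) as the one genuine geometric input.
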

\begin{proof}
This is already shown in \cite{CMM}. One can easily check that $a=e^{-f}$ and $b=-e^{-f}\frac{R}{2}$ satisfy the conditions in Lemma \ref{existcodazzi}.
\end{proof}

As mentioned earlier, the presence of this Codazzi tensor immediately gives additional information about the geometry of $(M,g,f)$.
By Lemma \ref{derdlem},
  \begin{align*}
    0=E_3\mu_2=E_3\left\{e^{-f}\left(\lambda_2-\frac{R}{2}\right)\right\}=E_3\left\{e^{-f}\left(-\frac{\lambda_1}{2}\right)\right\}.
  \end{align*}
  Thus, $E_3\lambda_1=\lambda_1(E_3f)$. In the local coordinate system $(x_1,x_2,x_3)$ defined in Theorem \ref{metricthm1}, it is equivalent to
  \begin{align}
    \lambda_1=c_4(x_1)e^f \label{solitonlambda1}
  \end{align}
  for a positive function $c_4$ by integration.
  Further, since $E_3R=2\lambda_2(E_3f)$ by Lemma \ref{solitonformulas},
\begin{align}
  E_3\lambda_2=\frac{1}{2}(E_3R-E_3\lambda_1)=\left(\lambda_2-\frac{\lambda_1}{2}\right)(E_3f). \label{sole3lambda2}
\end{align}

Let us first consider the case where $\mu_2$ is not a constant.
\begin{lemma}\label{gamma113}
  If $\mu_2$ is not a constant, then we get $E_3f=0$, which is a contradiction.
\end{lemma}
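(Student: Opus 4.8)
The plan is to derive a contradiction from the standing assumption $E_3f\neq0$ together with the hypothesis that $\mu_2$ is nonconstant, by differentiating the Codazzi constraint supplied by Lemma \ref{giftlem} along $E_3$ and feeding in the soliton-specific evolution of the eigenvalues. First I would record the eigenvalues of $\mathcal{T}$: since $R=\lambda_1+2\lambda_2$, the computation $\mu_1=e^{-f}(\lambda_1-\tfrac{R}{2})$ together with $\mu_2=-\tfrac{\lambda_1}{2}e^{-f}$ gives $\mu_2-\mu_1=e^{-f}(\lambda_2-\lambda_1)$. Hence, because $\mu_2$ is not constant, Lemma \ref{giftlem} yields
\begin{align}
\sqrt{g_{11}}\,e^{-f}(\lambda_2-\lambda_1)=c_3(x_1) \label{planD}
\end{align}
for a positive function $c_3$.

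Next I would apply $E_3$ to (\ref{planD}). The right side is annihilated since $c_3$ depends only on $x_1$. On the left I would use $E_3\sqrt{g_{11}}=-\sqrt{g_{11}}\,\Gamma_{11}^3$ (from (\ref{gammacord})), $E_3e^{-f}=-e^{-f}(E_3f)$, and $E_3(\lambda_2-\lambda_1)=(\lambda_2-\tfrac32\lambda_1)(E_3f)$, the last following from (\ref{sole3lambda2}) and the relation $E_3\lambda_1=\lambda_1(E_3f)$ behind (\ref{solitonlambda1}). After dividing by the nonvanishing factor $\sqrt{g_{11}}\,e^{-f}$, the two $E_3f$-contributions combine to $-\tfrac{\lambda_1}{2}(E_3f)$, and I expect to obtain the key identity
\begin{align}
\Gamma_{11}^3(\lambda_2-\lambda_1)=-\frac{\lambda_1}{2}(E_3f). \label{planstar}
\end{align}

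With (\ref{planstar}) in hand I would split according to Lemma \ref{23second}(ii). If $\Gamma_{11}^3\neq0$, I would substitute the explicit forms $E_3f=c_1\sqrt{g_{22}}$ and $\Gamma_{11}^3=-\frac{c_2\sqrt{g_{22}}}{2\sqrt{g_{11}}}$ coming from Lemma \ref{23second} into (\ref{planstar}); cancelling $\sqrt{g_{22}}$ and then eliminating $\lambda_2-\lambda_1$ and $\lambda_1$ via (\ref{planD}) and (\ref{solitonlambda1}) should force $g_{11}=\frac{c_2c_3}{c_1c_4}$ to depend on $x_1$ alone, whence $\partial_3g_{11}=0$ and $\Gamma_{11}^3=0$ by (\ref{gammacord}) — contradicting $\Gamma_{11}^3\neq0$. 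Therefore $\Gamma_{11}^3=0$, and then (\ref{planstar}) reduces to $\tfrac{\lambda_1}{2}(E_3f)=0$; since $\lambda_1=c_4e^f>0$ by (\ref{solitonlambda1}), this gives $E_3f=0$, the asserted contradiction.

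The main obstacle is the bookkeeping in passing from (\ref{planD}) to (\ref{planstar}): one must correctly track the three product terms produced by $E_3$ and verify that the $(\lambda_2-\lambda_1)$-weighted $E_3f$-term and the $(\lambda_2-\tfrac32\lambda_1)$-term coalesce exactly into $-\tfrac{\lambda_1}{2}(E_3f)$, which is what makes $\Gamma_{11}^3$ appear linearly against $E_3f$. Recognizing that the right differentiation direction is $E_3$ (so that the $x_1$-dependent integration constant $c_3$ drops out while the soliton evolution of the eigenvalues enters) is the conceptual heart; once (\ref{planstar}) and the exclusion of $\Gamma_{11}^3\neq0$ are in place, the contradiction is immediate. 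Throughout I would invoke the nonvanishing of $c_1,c_2,c_3,c_4$, of $\lambda_2-\lambda_1$, and of $\sqrt{g_{22}}$ to justify each cancellation.
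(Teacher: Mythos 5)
Your proposal is correct and follows the same overall strategy as the paper: both start from the Lemma \ref{giftlem} constraint $\sqrt{g_{11}}(\mu_2-\mu_1)=c_3(x_1)$, differentiate it in the $x_3$-direction using $E_3\lambda_1=\lambda_1(E_3f)$ and (\ref{sole3lambda2}), substitute the explicit forms from Lemma \ref{23second}, and split into the cases $\Gamma_{11}^3\neq0$ and $\Gamma_{11}^3=0$. In the first case your computation coincides with the paper's (your $g_{11}=c_2c_3/(c_1c_4)$ versus the paper's $\frac{c_1c_4}{2}+\frac{c_2c_3}{g_{11}}=0$; the discrepancy traces to a sign/factor slip in the paper's displayed $\partial_3$-derivative and is immaterial, since either relation forces $g_{11}$ to depend on $x_1$ alone, hence $\Gamma_{11}^3=0$, a contradiction). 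Where you genuinely depart from the paper is the case $\Gamma_{11}^3=0$: the paper there re-derives the explicit metric (\ref{113vanish}) and computes $\lambda_1=2R_{1221}=-H'-\frac{H^2}{2}$, a function of $x_1$ only, to conclude, whereas your unified identity $\Gamma_{11}^3(\lambda_2-\lambda_1)=-\frac{\lambda_1}{2}(E_3f)$ disposes of this case immediately, with no curvature computation; this is a clean streamlining.

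One caveat: in that last step you deduce $E_3f=0$ from $\lambda_1(E_3f)=0$ by invoking $\lambda_1=c_4e^f>0$ ``by (\ref{solitonlambda1})''. The positivity of $c_4$ asserted there is in fact the weak point of that equation: integrating $E_3\lambda_1=\lambda_1(E_3f)$ only yields a $c_4(x_1)$ of unrestricted sign, possibly zero, and the paper's own proof of this lemma explicitly entertains $c_4=0$ and rules it out separately, by noting that $\lambda_1\equiv0$ would give $\mu_2=-\frac{1}{2}e^{-f}\lambda_1\equiv0$, contradicting the nonconstancy of $\mu_2$. You should insert that one-line observation (it also covers the division by $c_1c_4$ in your $\Gamma_{11}^3\neq0$ case, where $c_4=0$ would instead contradict $c_2c_3\neq0$ directly); with it, your argument is complete.
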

\begin{proof}
  Assuming that $\mu_2$ is not a constant, we have $e^{-f}(\lambda_2-\lambda_1)=\mu_2-\mu_1=\frac{c_3(x_1)}{\sqrt{g_{11}}}$ from Lemma \ref{giftlem}. Thus, $\lambda_2=\left(c_4+\frac{c_3}{\sqrt{g_{11}}}\right)e^{f}$ by (\ref{solitonlambda1}). Suppose that $\Gamma_{11}^3\neq0$.  Taking the $\partial_3$-derivative,
  \begin{align}
    \partial_3\lambda_2=&c_3(g_{11})^{-\frac{3}{2}}(\partial_3g_{11})e^f+\left(c_4+\frac{c_3}{\sqrt{g_{11}}}\right)e^f(\partial_3f).
  \end{align}
  However, $\partial_3\lambda_2=\left(\lambda_2-\frac{\lambda_1}{2}\right)\partial_3f$ by (\ref{sole3lambda2}). By the result $\partial_3f=c_1g_{33}\sqrt{v}$ in Lemma \ref{23second},
 \begin{align*}
   c_3(g_{11})^{-\frac{3}{2}}c_2g_{33}\sqrt{g_{11}v}e^f+\left(c_4+\frac{c_3}{\sqrt{g_{11}}}\right)e^fc_1g_{33}\sqrt{v} \\
   =\left\{\left(c_4+\frac{c_3}{\sqrt{g_{11}}}\right)e^f-\frac{c_4}{2}e^f\right\}c_1g_{33}\sqrt{v}.
 \end{align*}
 Thus, we get $\frac{c_1c_4}{2}+\frac{c_3c_2}{g_{11}}=0$. Therefore, if $c_2c_3\neq 0$, then $g_{11}$ is a function of $x_1$ only, which means that $\Gamma_{11}^3=-\frac{\partial_3g_{11}}{2g_{11}\sqrt{g_{33}}}=0$.
 Thus, $c_2c_3$ must be zero. However, we are then in a contradictory situation where $\mu_1=\mu_2$ or $\Gamma_{11}^3=0$. Therefore, we can see that our assumption that $\Gamma_{11}^3$ is not zero is incorrect.

Now, suppose that $\Gamma_{11}^3=0$. By Lemma \ref{23second}, there exists locally a coordinate system $(x_1,x_2,x_3)$ in which
\begin{align}
  g=dx_1^2+e^{\int_c^{x_1}Hdu}v(x_3)dx_2^2+e^{\int_c^{x_1}Hdu}dx_3^2. \label{113vanish}
\end{align}
By Lemma \ref{nabla},
\begin{align*}
  c_4(x_1)e^f=\lambda_1=2R_{1221}=-H'(x_1)-\frac{H(x_1)^2}{2}.
\end{align*}
Hence, either $f$ is a function of $x_1$ only or $c_4=0=-H'-\frac{H^2}{2}$, which implies that $\lambda_1=0$. If $\lambda_1=0$, then $\mu_2=e^{-f}(\lambda_2-\frac{R}{2})=e^{-f}
(-\frac{\lambda_1}{2})=0$. However, we assumed that $\mu_2$ is not a constant. Hence, $f$ should depend only on $x_1$.
\end{proof}

Now, we suppose that $\mu_2$ is a constant. Since $\mu_2=e^{-f}(\lambda_2-\frac{R}{2})=e^{-f}(-\frac{\lambda_1}{2})$, we have $\lambda_1=a e^f$ for a constant $a$. From $\frac{E_1\mu_2}
{\mu_2-\mu_1}=-\frac{H}{2}=-\frac{E_1g_{33}}{2g_{33}}$, we obtain $H=0$ and $\partial_1g_{33}=0$. Hence, the metric $g$ can be written as
\begin{align}
  g=g_{11}(x_1,x_3)dx_1^2+v(x_3)dx_2^2+dx_3^2.
\end{align}

\begin{lemma} \label{solitonresult}
 Let $(M,g,f)$ be a three-dimensional gradient Ricci soliton with $\lambda_1\neq\lambda_2=\lambda_3$. Suppose that $E_3f\neq0$ and  $\mu_2$ is
  a constant. Then, there exists locally a coordinate system $(x_1,x_2,x_3)$ such that the metric $g$ is
 \begin{align}
   g=dx_1^2+k'(x_3)^2dx_2^2+dx_3^2, \label{solmetricresult}
 \end{align}
where $k$ is a solution of
\begin{align}
  k''-\frac{(k')^2}{2}+\lambda k=C
\end{align}
for a constant $C$. Further, the potential function $f=\frac{\lambda}{2}x_1^2+k(x_3)$. In particular, $(\tilde{g},\tilde{f})=(k'(x_3)^2dx_2^2+dx_3^2,k)$ is a two-dimensional gradient
Ricci soliton. Conversely, any metrics $g$ and $f$ in the above form satisfy (\ref{soldef}).
\end{lemma}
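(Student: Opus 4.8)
The plan is to begin from the normalized metric $g=g_{11}(x_1,x_3)\,dx_1^2+v(x_3)\,dx_2^2+dx_3^2$ already produced by the constancy of $\mu_2$ (which forced $H=0$ and, after rescaling $x_3$, $g_{33}=1$), and to use the two facts recorded just above the statement: $\lambda_1=ae^f$ for a constant $a$, and $\partial_3 f=c_1(x_1)\sqrt v$ from Lemma~\ref{23second}(i). The entire reduction hinges on proving $\Gamma_{11}^3=0$, because once this holds Lemma~\ref{23second}(ii) together with $H=0$ gives $g_{11}=g_{33}=1$, hence $g=dx_1^2+v(x_3)\,dx_2^2+dx_3^2$. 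I would split into the cases $a=0$ and $a\neq 0$. When $a=0$ we have $\lambda_1=0$, and since $\lambda_1=2R_{1221}=-2\Gamma_{11}^3\Gamma_{22}^3$ by Lemma~\ref{nabla}, either $\Gamma_{11}^3=0$ or $\Gamma_{22}^3=0$; but $\Gamma_{22}^3\equiv0$ would give $R_{2332}=0$ and hence $\lambda_2=R_{1221}+R_{2332}=0=\lambda_1$, contradicting $\lambda_1\neq\lambda_2$. Thus $\Gamma_{11}^3=0$ in this case.

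The main obstacle is to rule out $a\neq0$. Here $\lambda_1=ae^f$ never vanishes, so $\Gamma_{11}^3,\Gamma_{22}^3\neq0$ everywhere, and I would derive a contradiction by computing $\Gamma_{11}^3\,\partial_1 f$ in two independent ways. The off-diagonal soliton equation $\nabla df(E_1,E_3)=0$, together with $\nabla_{E_1}E_3=-\Gamma_{11}^3E_1$ and $\partial_3 f=c_1\sqrt v$, gives $\Gamma_{11}^3\,\partial_1 f=-c_1'\sqrt v$. On the other hand, differentiating $\lambda_1=-2\Gamma_{11}^3\Gamma_{22}^3$ along $E_3$ and inserting $\lambda_1=ae^f$ and (\ref{r12211331}) yields $E_3\Gamma_{22}^3=\Gamma_{22}^3\big(E_3 f-\Gamma_{11}^3+\Gamma_{22}^3\big)$; since $\Gamma_{22}^3$ and $E_3\Gamma_{22}^3$ depend only on $x_3$, this forces $E_3 f-\Gamma_{11}^3$ to be a function of $x_3$ alone, and applying $\partial_1$ (using $\partial_1\Gamma_{11}^3=\Gamma_{11}^3\,\partial_1 f$, which follows from $\Gamma_{11}^3=-ae^f/2\Gamma_{22}^3$) gives $\Gamma_{11}^3\,\partial_1 f=+c_1'\sqrt v$. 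Comparing the two forces $c_1'=0$, hence $\partial_1 f=0$ and $f=f(x_3)$, after which all metric coefficients become functions of $x_3$ once an $x_1$-factor of $g_{11}$ is absorbed by reparametrizing $x_1$. Finally I would differentiate the diagonal equation $-\Gamma_{11}^3 f'+\lambda_1=\lambda$ along $x_3$, substitute $f''=-\Gamma_{22}^3 f'$, the identity (\ref{r12211331}), and $\lambda_1'=\lambda_1 f'$; after using $\lambda_1=-2\Gamma_{11}^3\Gamma_{22}^3$ everything collapses to $(\Gamma_{11}^3)^2 f'=0$. Since $f'=E_3 f\neq 0$ this gives $\Gamma_{11}^3=0$, contradicting $a\neq 0$. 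Hence $a\neq 0$ cannot occur, and $\Gamma_{11}^3=0$ always.

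With $g=dx_1^2+v(x_3)\,dx_2^2+dx_3^2$ and $\lambda_1=0$ established, I would determine the potential. Now $\nabla df(E_1,E_3)=\partial_1\partial_3 f=0$ with $\partial_3 f=c_1(x_1)\sqrt v$ gives $c_1'=0$, so $f=F(x_1)+G(x_3)$, while the diagonal equation $\partial_1^2 f+\lambda_1=\lambda$ gives $F''=\lambda$, i.e.\ $F=\tfrac{\lambda}{2}x_1^2+b_1x_1+b_0$. Because $\Gamma_{11}^3=H=0$, the field $E_1=\partial_1$ is parallel (all $\nabla_{E_i}E_1=0$ by Lemma~\ref{nabla}); thus for $\lambda=0$ the summand $b_1x_1$ has vanishing Hessian and may be dropped from the potential, while for $\lambda\neq0$ a translation in $x_1$ removes it. Setting $k:=G+b_0$ and rescaling $x_2$ so that $v=(k')^2$ (legitimate since $\partial_3 f=k'=c_1\sqrt v$), I obtain $g=dx_1^2+(k')^2dx_2^2+dx_3^2$ and $f=\tfrac{\lambda}{2}x_1^2+k(x_3)$.

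It remains to produce the ODE and the splitting. A direct computation gives $\lambda_2=R_{2332}=-k'''/k'$, the Gauss curvature of the $E_{23}$-leaf, so the diagonal equation $\partial_3^2 f+\lambda_2=\lambda$ reads $k''-k'''/k'=\lambda$, that is $k'''=k'(k''-\lambda)$; one integration yields $k''-\tfrac12(k')^2+\lambda k=C$. Since $\nabla df$ and $Rc$ are block diagonal with respect to the parallel splitting $dx_1^2\oplus\tilde g$, $\tilde g=(k')^2dx_2^2+dx_3^2$, the $\tilde g$-block is exactly the two-dimensional soliton equation for $(\tilde g,k)$, giving the stated $(\tilde g,\tilde f)$. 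The converse is then a routine substitution, verifying (\ref{soldef}) for this $g$ and $f$ directly from the ODE. I expect the genuinely delicate part to be the elimination of the case $a\neq0$ in the second paragraph, where the two expressions for $\Gamma_{11}^3\,\partial_1 f$ must be played against each other and the final diagonal identity must collapse completely.
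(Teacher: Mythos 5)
Your proposal is correct --- I checked the two computations you flag as delicate, and both collapse exactly as you claim --- but it executes the two key steps differently from the paper, within the same overall skeleton (the dichotomy on the constant $a$ in $\lambda_1=ae^f$, the identical handling of $a=0$, and the reduction to $g=dx_1^2+v(x_3)dx_2^2+dx_3^2$). To eliminate $a\neq0$, the paper is much shorter: it differentiates $\lambda_1=-2\Gamma_{11}^3\Gamma_{22}^3=ae^f$ along $E_3$, substitutes into the $(E_2,E_2)$-component of the soliton equation (its equation (\ref{soso1})), and uses (\ref{r12211331}) to collapse everything to $\lambda+2\Gamma_{11}^3\Gamma_{22}^3=0$, so that $\lambda_1\equiv\lambda$ is constant and $f$ must be constant --- a contradiction. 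You never use the $(E_2,E_2)$-equation in this case; instead you first extract $\partial_1f=0$ by playing the off-diagonal $(E_1,E_3)$-equation against the $\partial_1$-derivative of the identity ``$E_3f-\Gamma_{11}^3$ is a function of $x_3$,'' and then collapse the differentiated $(E_1,E_1)$-equation to $(\Gamma_{11}^3)^2f'=0$; this costs more work but isolates which components of the equation force the rigidity, and the intermediate conclusion $f=f(x_3)$ is structurally informative. Second, for the ODE you integrate $k'''=k'(k''-\lambda)$ directly from the $(E_3,E_3)$-equation, whereas the paper invokes the conservation law $R+|\nabla f|^2-2\lambda f=\mathrm{const}$ of Lemma \ref{solitonformulas}(ii); your derivation is more elementary and self-contained. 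Finally, you handle the linear term $b_1x_1$ in the potential explicitly: the paper writes $f=\frac{\lambda}{2}x_1^2+k(x_3)$ outright, silently discarding a term that genuinely can occur (for $\lambda=0$ it cannot be removed by translation, only by replacing $f$ with another potential of the same soliton structure, legitimate since $\partial_1$ is parallel), so on this point your argument is the more careful of the two.
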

\begin{proof}
Since $H=0$, the curvature components are as follows:
\begin{align*}
  R_{1221}=-\Gamma_{11}^3\Gamma_{22}^3,\quad R_{1331}=E_3\Gamma_{11}^3-(\Gamma_{11}^3)^2,\quad R_{2332}=E_3\Gamma_{22}^3-(\Gamma_{22}^3)^2.
\end{align*}
From $\nabla df(E_2,E_2)=\lambda-\lambda_2$,
\begin{align}
  -\Gamma_{22}^3(E_3f)=\lambda -(E_3\Gamma_{22}^3)+(\Gamma_{22}^3)^2+\Gamma_{11}^3\Gamma_{22}^3. \label{soso1}
\end{align}
Suppose that $\lambda_1\neq 0$. Taking the $E_3$-derivative of $\lambda_1=-2\Gamma_{11}^3\Gamma_{22}^3=ae^f$, we get $E_3f=\frac{E_3\Gamma_{11}^3}{\Gamma_{11}^3}+\frac{E_3\Gamma_{22}^3}{\Gamma_{22}^2}$. By (\ref{soso1}), $\lambda+(\Gamma_{22}^3)^2+\Gamma_{11}^3\Gamma_{22}^3+\frac{E_3\Gamma_{11}^3}{\Gamma_{11}^3}\Gamma_{22}^3=0$. Then,
$\lambda+2\Gamma_{11}^3\Gamma_{22}^3=0$ by (\ref{r12211331}). This implies that $f$ is a constant function. Hence, we can
conclude that $\lambda_1=0$. However, if $\Gamma_{22}^3=0$, then $\lambda_1=\lambda_2=0$. Hence,
$\Gamma_{11}^3=-\frac{E_3g_{11}}{2g_{11}}$ must be zero, which implies that $g_{11}$ does not depend on $x_3$.  Thus, the metric $g$ can now be written as
\begin{align*}
  g=dx_1^2+v(x_3)dx_2^2+dx_3^2.
\end{align*}
We can easily see that $\lambda_2=-\frac{v''}{2v}$. From $\nabla df=-Rc+\lambda g$,
\begin{align}
  \partial_1\partial_1f&=\lambda, \label{e1e1}\\
 \partial_1\partial_3f&=0,  \label{e1e3}\\
  \frac{v'}{2v}(\partial_3f)&=\frac{v''}{2v}-\frac{1}{4}\left(\frac{v'}{v}\right)^2+\lambda,  \label{e2e22}\\
  \partial_3\partial_3f&=\frac{v''}{2v}-\frac{1}{4}\left(\frac{v'}{v}\right)^2+\lambda.     \label{e3e3}
\end{align}
From (\ref{e1e3}), $\partial_3f$ depends only on $x_3$. Then, if necessary, through coordinate change, we get $v=(\partial_3f)^2$ by (\ref{e2e22}) and (\ref{e3e3}). Similarly, $f=\frac{\lambda}{2}x_1^2+k(x_3)$ by (\ref{e1e1}) and (\ref{e1e3}). Let $p(x_3):=(\partial_3f)(x_3)$. Then, we
get $p'=\frac{p''}{p}+\lambda$ by (\ref{e2e22}). Let us consider (ii) in Lemma \ref{solitonformulas}. Assigning what we have obtained thus far, we get $-2\frac{p''}{p}+(k')^2-2\lambda k=C$
for a constant $C$. Since $k'=p$ and $\frac{p''}{p}=p'-\lambda$, we finally obtain
\begin{align}
  k''-\frac{(k')^2}{2}+\lambda k=C \label{ksol}
\end{align}
for a constant $C$. One can easily check that $(\tilde{g}=k'(x_3)^2dx_2^2+dx_3^2,\tilde{f}=k)$ satisfies $\nabla df+Rc=\lambda g$ as long as $k$ is a solution to (\ref{ksol}) and the converse part.
\end{proof}

\section{$V$-static and critical point metric with $E_3f\neq0$}
In this section, we consider the $V$-static ($\kappa\neq0$) and critical point metric simultaneously. These two spaces can
be regarded as the general one
\begin{align}
  \nabla df=(a_1+f)Rc+\left(a_2-\frac{R}{2}f\right)g \label{generaldef}
\end{align}
for constants $a_1$ and $a_2$. Note that the scalar curvature $R$ is constant in these cases.

\begin{lemma}
  Let $(M,g,f)$ be a three-dimensional Riemannian manifold satisfying (\ref{generaldef}). Then, the $(0,2)$-tensor
  \begin{align}
    D:=(a_1+f)^2Rc+\left\{\frac{1}{2}|\nabla f|^2-(a_2+a_1R)f-\frac{R}{4}f^2\right\}g
  \end{align}
  is a Codazzi tensor.
\end{lemma}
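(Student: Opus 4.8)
The plan is to apply Lemma~\ref{existcodazzi} directly, since $D$ already has the shape $aRc+bg$ with
\[
a=(a_1+f)^2,\qquad b=\frac{1}{2}|\nabla f|^2-(a_2+a_1R)f-\frac{R}{4}f^2,
\]
and since (\ref{generaldef}) is the instance of (\ref{ggrsdef}) with $\psi=a_1+f$ and $\phi=a_2-\frac{R}{2}f$. Thus it suffices to verify the two criteria (\ref{acond}) and (\ref{bcond}) for this choice of $a,b$. I would first record two simplifications used throughout: because $a_1$ is constant, $\psi=a_1+f$ gives $\nabla_i\psi=\nabla_i f$, so $\nabla_i\psi+\nabla_i f=2\nabla_i f$; and because $R$ is constant in the $V$-static and critical-point settings, $\nabla_i R=0$. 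With these in hand, (\ref{acond}) is immediate: from $\nabla_i a=2(a_1+f)\nabla_i f$ one gets $\nabla_i a/a=2\nabla_i f/(a_1+f)$, which is exactly $(\nabla_i\psi+\nabla_i f)/\psi$.

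The substantive step is (\ref{bcond}). With $\nabla_i R=0$ the right-hand side collapses to $\tfrac{1}{a_1+f}\bigl(R_{il}\nabla_l f-R\nabla_i f\bigr)$. For the left-hand side I would compute $\nabla_i b$, where the only nonroutine piece is differentiating $\tfrac{1}{2}|\nabla f|^2$: using $\tfrac{1}{2}\nabla_i|\nabla f|^2=\nabla_l f\,\nabla_i\nabla_l f=\nabla_l f\,(\nabla df)_{il}$ and substituting (\ref{generaldef}) for the Hessian produces the term $(a_1+f)R_{il}\nabla_l f$ together with a multiple of $\nabla_i f$ coming from the $g$-part of $(\nabla df)_{il}$. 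Collecting everything, the $\nabla_i f$-coefficients combine so that
\[
\nabla_i b=(a_1+f)\bigl(R_{il}\nabla_l f-R\nabla_i f\bigr),
\]
and dividing by $a=(a_1+f)^2$ reproduces the right-hand side of (\ref{bcond}) exactly.

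The main obstacle is bookkeeping rather than conceptual. One must track the three $\nabla_i f$-contributions to $\nabla_i b$, namely $(a_2-\tfrac{R}{2}f)\nabla_i f$ from the Hessian substitution, $-(a_2+a_1R)\nabla_i f$ from the linear term of $b$, and $-\tfrac{R}{2}f\,\nabla_i f$ from the quadratic term, and confirm that they sum to $-R(a_1+f)\nabla_i f$. This is precisely the step that forces the constants $(a_2+a_1R)$ and the coefficient $\tfrac{R}{4}$ of $f^2$ in the definition of $b$, and verifying that these signs and constants produce the clean factor $(a_1+f)$ is the only delicate part. Once that factor pulls out, its cancellation against $a=(a_1+f)^2$ is automatic, and by the equivalence in Lemma~\ref{existcodazzi} the tensor $D$ is a Codazzi tensor.
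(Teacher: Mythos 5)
Your proposal is correct and follows exactly the paper's approach: the paper also proves this lemma by invoking Lemma~\ref{existcodazzi} with $a=(a_1+f)^2$ and $b=\frac{1}{2}|\nabla f|^2-(a_2+a_1R)f-\frac{R}{4}f^2$, merely asserting that the conditions (\ref{acond}) and (\ref{bcond}) "can easily be checked." Your write-up supplies precisely that verification (using $\nabla_i\psi=\nabla_i f$, the constancy of $R$, and the substitution of (\ref{generaldef}) into $\nabla_i|\nabla f|^2$), and the bookkeeping of the $\nabla_i f$-coefficients is accurate.
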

\begin{proof}
 One can easily check that $a=(a_1+f)^2$ and $b=\frac{1}{2}|\nabla f|^2-(a_2+a_1R)f-\frac{R}{4}f^2$ satisfy the conditions in Lemma \ref{existcodazzi}.

\end{proof}
Recall that a coordinate system $(x_1,x_2,x_3)$ in Theorem \ref{metricthm1} in which the metric $g$ is as follows exists locally:
\begin{align}
   g=g_{11}(x_1,x_3)dx_1^2+g_{33}(x_1,x_3)v(x_3)dx_2^2+g_{33}(x_1,x_3)dx_3^2. \label{startmetric1}
\end{align}
\begin{lemma}\label{e3mu2tam}
  Let $(M^3,g,f)$ be a three-dimensional Riemannian manifold satisfying (\ref{generaldef}) with $\lambda_1\neq\lambda_2=\lambda_3$. Consider an adapted frame field $\{E_i\}$ in
  an open subset of $\{\nabla f\neq 0\}$. Suppose that $E_3f\neq0$. Then, there exists locally a coordinate system $(x_1,x_2,x_3)$ such that
    \begin{align*}
    (a_1+f)^3=\frac{c_5(x_1)}{3\lambda_2-R},\quad  (3\lambda_2-R)^2=\frac{p_2(x_3)}{g_{33}^3}
  \end{align*}
  for positive functions $c_5$ and $p_2$.
\end{lemma}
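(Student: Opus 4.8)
The plan is to read off the eigenvalues of the Codazzi tensor $D$ from the previous lemma and extract two scalar identities, one killing every $E_3$-derivative and one killing every $E_1$-derivative of the relevant quantities. Write $w:=a_1+f$, so that $D=w^2Rc+bg$ has eigenvalues $\mu_1=w^2\lambda_1+b$ and $\mu_2=\mu_3=w^2\lambda_2+b$. Since $R$ is constant, $\lambda_1=R-2\lambda_2$ and hence $\lambda_2-\lambda_1=3\lambda_2-R$, so the two assertions amount to saying that $w^3(\lambda_2-\lambda_1)$ is a function of $x_1$ alone and that $(\lambda_2-\lambda_1)^2g_{33}^3$ is a function of $x_3$ alone. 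Because the frame is adapted, $E_2f=E_2\lambda_i=E_2R=0$, so in the coordinates of Theorem \ref{metricthm1} the functions $f,\lambda_i,g_{11},g_{33}$ are independent of $x_2$; thus in both cases it remains only to kill one further derivative.

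For the first identity I would exploit that the $\mu_2$-eigenspace of $D$ is the two-dimensional distribution spanned by $E_2,E_3$. By Lemma \ref{derdlem}, $\mu_2$ is constant along its leaves, so $E_3\mu_2=0$, while part (ii) of that lemma (with $v=E_3$, $u=E_1$, and $\nabla_{E_1}E_1=\Gamma_{11}^3E_3$ from Lemma \ref{nabla}) gives $E_3\mu_1=(\mu_1-\mu_2)\Gamma_{11}^3$. Subtracting the expansions of $E_3\mu_2$ and $E_3\mu_1$ cancels the unknown $E_3b$ and, after using $E_3\lambda_1=-2E_3\lambda_2$ (from $E_3R=0$), yields one relation among $E_3f$, $E_3\lambda_1$, and $\Gamma_{11}^3$. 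A second, independent relation comes from the curvature identity (\ref{ijcoda}) with $(i,j)=(1,3)$: computing $\nabla_3R_{11}=E_3\lambda_1$ and $\nabla_1R_{31}=-\Gamma_{11}^3(\lambda_2-\lambda_1)$ with the connection coefficients of Lemma \ref{nabla} turns it into $-(\lambda_2-\lambda_1)E_3f+wE_3\lambda_1+w\Gamma_{11}^3(\lambda_2-\lambda_1)=0$. Eliminating $E_3\lambda_1$ between the two relations and dividing by the nonzero factor $w(\lambda_2-\lambda_1)$ collapses everything to the clean identity $E_3f=-w\Gamma_{11}^3$. With this in hand,
\begin{align*}
  E_3\!\left[w^3(\lambda_2-\lambda_1)\right]=w^2(\lambda_2-\lambda_1)\left(E_3f+w\Gamma_{11}^3\right)=0,
\end{align*}
and together with $x_2$-independence this shows $w^3(\lambda_2-\lambda_1)=:c_5(x_1)$, i.e. $(a_1+f)^3=\frac{c_5(x_1)}{3\lambda_2-R}$.

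The second identity is cleaner and does not even need $D$. In (\ref{ijcoda}) with $(i,j)=(2,1)$ the coefficient $\lambda_2+\frac12\lambda_1-\frac R2$ vanishes identically (again by $R=\lambda_1+2\lambda_2$), so the identity reduces to $\nabla_1R_{22}=\nabla_2R_{12}$. Evaluating both sides with Lemma \ref{nabla} gives $E_1\lambda_2$ on the left and $-\frac H2(\lambda_2-\lambda_1)$ on the right, whence $E_1\lambda_2=-\frac H2(\lambda_2-\lambda_1)$ and therefore $E_1(\lambda_2-\lambda_1)=3E_1\lambda_2=-\frac{3H}{2}(\lambda_2-\lambda_1)$. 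Combined with $E_1g_{33}=Hg_{33}$ from (\ref{gammacord}) this yields
\begin{align*}
  E_1\!\left[(\lambda_2-\lambda_1)^2g_{33}^3\right]=(\lambda_2-\lambda_1)g_{33}^3\left[2E_1(\lambda_2-\lambda_1)+3H(\lambda_2-\lambda_1)\right]=0,
\end{align*}
so, using $x_2$-independence, $(\lambda_2-\lambda_1)^2g_{33}^3=:p_2(x_3)$; this is positive since $\lambda_2\neq\lambda_1$ and $g_{33}>0$, giving $(3\lambda_2-R)^2=\frac{p_2(x_3)}{g_{33}^3}$.

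The main obstacle is the first identity: unlike the second, its key relation $E_3f=-(a_1+f)\Gamma_{11}^3$ is not available from a single clean equation but only after pairing the Codazzi-tensor data for $D$ with the Ricci-derivative identity (\ref{ijcoda}) and eliminating the auxiliary quantities $E_3b$ and $E_3\lambda_1$; the bookkeeping of the covariant derivatives $\nabla_iR_{jk}$ in the adapted frame is where care is needed. Positivity of $c_5$ is the only remaining subtlety: it has constant sign on each connected piece of $\{\nabla f\neq0\}$, and one normalizes it to be positive.
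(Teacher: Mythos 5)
Your proof is correct, and it reaches both identities by a genuinely different mechanism than the paper, even though the skeleton is the same (kill the $\partial_3$-derivative of one quantity and the $\partial_1$-derivative of the other, then integrate, using $x_2$-independence throughout). For the first identity the paper also starts from $0=E_3\mu_2$, but instead of your elimination scheme it expands $\mu_2$ directly, computing $E_3\bigl(\tfrac12|\nabla f|^2\bigr)=\nabla df(E_3,\nabla f)=(a_1+f)\lambda_2E_3f+\bigl(a_2-\tfrac{R}{2}f\bigr)E_3f$ from (\ref{generaldef}); all terms then combine into $(a_1+f)(3\lambda_2-R)E_3f+(a_1+f)^2E_3\lambda_2=0$ in one line, which integrates to the claim. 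You avoid ever differentiating the $b$-part of $D$ by passing to $\mu_1-\mu_2$, at the cost of a second Codazzi fact ($E_3\mu_1=(\mu_1-\mu_2)\Gamma_{11}^3$ from Lemma \ref{derdlem} (ii)) plus the identity (\ref{ijcoda}) with $(i,j)=(1,3)$ and an elimination of $E_3\lambda_1$; the reward is the clean pointwise relation $E_3f=-(a_1+f)\Gamma_{11}^3$, which the paper never isolates and which is a nice consistency check against Lemma \ref{metriclemtam} (there $\Gamma_{11}^3=q'$ and $f=c_8(q+b)^{-1}-a_1$). For the second identity the difference is sharper: the paper equates the Codazzi formula $E_1\mu_2=(\mu_2-\mu_1)\bigl(-\tfrac{H}{2}\bigr)$ with the direct expansion $E_1\mu_2=(a_1+f)^2E_1\lambda_2$, whereas you obtain the same relation $E_1\lambda_2=-\tfrac{H}{2}(3\lambda_2-R)$ from (\ref{ijcoda}) with $(i,j)=(2,1)$ alone, so your argument shows this half needs no Codazzi tensor at all, a mild but genuine simplification. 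Two small blemishes, neither fatal: your displayed computation of $E_3\bigl[w^3(\lambda_2-\lambda_1)\bigr]$ is missing a factor $\tfrac32$ in front of $(E_3f+w\Gamma_{11}^3)$ (harmless, since that bracket vanishes), and $c_5$ cannot really be ``normalized'' to be positive, since its sign equals that of $(a_1+f)^3(3\lambda_2-R)$, which is determined by the data and is merely locally constant; the paper asserts positivity with no more justification than you do, and only nonvanishing of $c_5$ is used later, so this shared imprecision is immaterial. Both proofs also tacitly divide by $a_1+f$, i.e., work on the open dense set $\{a_1+f\neq0\}$.
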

\begin{proof}
  We start with the metric $g$ in (\ref{startmetric1}).  By the property of the Codazzi tensor described in Lemma \ref{derdlem},
  \begin{align*}
    0=E_3\mu_2=&E_3\{(a_1+f)^2\lambda_2+\frac{1}{2}|\nabla f|^2-(a_2+a_1R)f-\frac{R}{4}f^2\}\\
    =&(E_3f)(3\lambda_2-R)(a_1+f)+(a_1+f)^2(E_3\lambda_2).
  \end{align*}
 Thus, $\frac{\partial_3f}{a_1+f}=-\frac{\partial_3\lambda_2}{3\lambda_2-R}$. By integrating, we get $(a_1+f)^3=\frac{c_5(x_1)}{3\lambda_2-R}$ for a positive function $c_5$. Again, by Lemma \ref{derdlem},
 \begin{align*}
   E_1\mu_2=(\mu_2-\mu_1)<\nabla_{E_2}E_2,E_1>=(a_1+f)^2(3\lambda_2-R)(-\frac{H}{2}).
 \end{align*}
However, direct computation gives
\begin{align}
  E_1\mu_2=E_1\{(a_1+f)^2\lambda_2+\frac{1}{2}|\nabla f|^2-(a_2+a_1R)f-\frac{R}{4}f^2\}=(a_1+f)^2(E_1\lambda_2). \label{e1mu2dir}
\end{align}
 Thus, we have $\frac{E_1\lambda_2}{3\lambda_2-R}=-\frac{H}{2}=-\frac{E_1g_{33}}{2g_{33}}$. By integrating, we get $(3\lambda_2-R)^2=\frac{p_2(x_3)}{g_{33}^3}$ for a positive function $p_2$.
 \end{proof}

\begin{lemma}\label{metriclemtam}
  Let $(M^3,g,f)$ be a three-dimensional Riemannian manifold satisfying (\ref{generaldef}) with $\lambda_1\neq\lambda_2=\lambda_3$. Consider an adapted frame field $\{E_i\}$ in an open
  subset of $\{\nabla f\neq 0\}$. Suppose that $E_3f\neq0$ and  $\mu_2$ is not a constant. Then, there exists a local coordinate system in which
  \begin{align}
    g=\frac{1}{\{q(x_3)+b(x_1)\}^2}\{dx_1^2+(q')^2dx_2^2+dx_3^2\} \label{113notzerometric}
  \end{align}
 for nonconstant functions $q$ and $b$ such that $b'=-\frac{H}{2}$. The potential function $f=c_8(q+b)^{-1}-a_1$ for a function $c_8$.
\end{lemma}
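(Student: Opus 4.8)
The plan is to combine the two Codazzi relations of Lemma \ref{e3mu2tam} with the separation relation of Lemma \ref{giftlem} to pin down $g_{11}$ and $g_{33}$, then to use Lemma \ref{commonmetric} to force an additive separation of variables, and finally to use the eigenvalue coincidence $\lambda_2=\lambda_3$ in the form (\ref{r12211331}) to identify the $dx_2^2$-coefficient. First I would record the eigenvalues of the Codazzi tensor $D$: writing $\mu_i=(a_1+f)^2\lambda_i+\beta$, where $\beta=\frac12|\nabla f|^2-(a_2+a_1R)f-\frac R4 f^2$ is the common scalar part, the term $\beta$ cancels in differences, so $\mu_2-\mu_1=(a_1+f)^2(\lambda_2-\lambda_1)$. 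Since $R=\lambda_1+2\lambda_2$ is constant, $\lambda_2-\lambda_1=3\lambda_2-R=:w$, hence $\mu_2-\mu_1=(a_1+f)^2 w$. Now Lemma \ref{e3mu2tam} gives $(a_1+f)^3 w=c_5(x_1)$ and $w^2 g_{33}^3=p_2(x_3)$, while Lemma \ref{giftlem} (the case $\mu_2$ nonconstant) gives $\sqrt{g_{11}}\,(a_1+f)^2 w=c_3(x_1)$. Substituting $(a_1+f)^2=(c_5/w)^{2/3}$ into the last relation yields $\sqrt{g_{11}}=c_3 c_5^{-2/3}w^{-1/3}$, so that
\begin{align*}
  g_{11}=\tilde c(x_1)^2\, w^{-2/3},\qquad g_{33}=p_2(x_3)^{1/3}\, w^{-2/3}
\end{align*}
are each a single-variable factor times the common factor $w^{-2/3}$.

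Next I would absorb the single-variable factors by coordinate changes. Replacing $x_1$ by $\int\tilde c\,dx_1$ and $x_3$ by $\int p_2^{1/6}\,dx_3$ (each a reparametrization of one variable that preserves the frame directions, and so neither disturbs the conclusion of Lemma \ref{commonmetric} nor the form of Theorem \ref{metricthm1}) gives $g_{11}=g_{33}=w^{-2/3}$, and, writing the $dx_2^2$-coefficient as $g_{22}=g_{33}v=w^{-2/3}V(x_3)$, the metric becomes $g=s^{-2}\{dx_1^2+V(x_3)dx_2^2+dx_3^2\}$ with $s:=w^{1/3}$. Here $w=\lambda_2-\lambda_1$ has constant sign on the connected open set, so we may take $s>0$, whence $1/\sqrt{g_{11}}=s$ and $E_1=s\,\partial_1$. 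By (\ref{gammacord}) one then computes $H=\frac{E_1 g_{33}}{g_{33}}=-2\,\partial_1 s$. By Lemma \ref{commonmetric} the function $H$ depends only on $x_1$, so $\partial_1 s$ is a function of $x_1$ alone; integrating in $x_1$ gives the additive separation $s=b(x_1)+q(x_3)$ with $b'=\partial_1 s=-\tfrac H2$, which is exactly the claimed relation $b'=-\tfrac H2$. From $(a_1+f)^3 w=c_5(x_1)$ and $w=s^3$ I then read off $a_1+f=c_5^{1/3}/(q+b)$, i.e. $f=c_8(q+b)^{-1}-a_1$ with $c_8=c_5(x_1)^{1/3}$.

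It remains to show $V=(q')^2$, which is where the eigenvalue coincidence $\lambda_2=\lambda_3$ enters and which I expect to be the main technical point. Using $g_{11}=g_{33}=s^{-2}$, $g_{22}=s^{-2}V$ and $E_3=s\,\partial_3$ in (\ref{gammacord}) gives $\Gamma_{11}^3=\partial_3 s=q'$ and $\Gamma_{22}^3=q'-\frac{sV'}{2V}$. Substituting these into (\ref{r12211331}), namely $E_3\Gamma_{11}^3=\Gamma_{11}^3(\Gamma_{11}^3-\Gamma_{22}^3)$, and using $E_3\Gamma_{11}^3=s\,q''$, the identity collapses to $\frac{q''}{q'}=\frac{V'}{2V}$; integrating yields $V=\mathrm{const}\cdot (q')^2$, and a constant rescaling of $x_2$ normalizes the constant to $1$, giving $V=(q')^2$ and the metric (\ref{113notzerometric}). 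This produces exactly the asserted form, with $f=c_8(q+b)^{-1}-a_1$ as above.

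The main obstacle is the careful bookkeeping: keeping track of which of $g_{11},g_{33},w,f$ is a function of which variable while eliminating $(a_1+f)$ among the three relations, and verifying that the successive one-variable reparametrizations remain compatible with the hypotheses of Lemmas \ref{commonmetric} and \ref{giftlem} and with the frame structure of Lemma \ref{nabla}. Once the metric has been brought to the form $s^{-2}\{dx_1^2+Vdx_2^2+dx_3^2\}$, both the separation-of-variables conclusion $s=b+q$ and the identification $V=(q')^2$ follow cleanly as above; the nonconstancy of $q$ and $b$ is then checked from $q'=\Gamma_{11}^3$ and $b'=-\tfrac H2$ being nonzero in the present case (otherwise one is thrown back to the warped-product branch, contradicting that $\mu_2$ is nonconstant with $E_3f\neq0$).
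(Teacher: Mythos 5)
Your proposal is correct and takes essentially the same route as the paper's proof: both combine Lemma \ref{giftlem} with the two integrated relations of Lemma \ref{e3mu2tam} to force $g_{11}$ and $g_{33}$ to be single-variable multiples of $(3\lambda_2-R)^{-2/3}$, use Lemma \ref{commonmetric} ($H=H(x_1)$) to obtain the additive separation $s=q(x_3)+b(x_1)$ with $b'=-\tfrac{H}{2}$, and then use the $\lambda_2=\lambda_3$ identity (\ref{r12211331}) to pin down the $dx_2^2$-coefficient as $(q')^2$, reading off $f=c_8(q+b)^{-1}-a_1$ from $(a_1+f)^3(3\lambda_2-R)=c_5$. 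The only organizational difference is that the paper dispatches the degenerate case $\Gamma_{11}^3=0$ at the outset (via Lemma \ref{23second} and constancy of $R$), whereas you defer the needed $q'\neq 0$ justification to the end; both versions of that step are sound.
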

\begin{proof}

First, consider $\Gamma_{11}^3=0$. Then, the metric $g$ can be written as $g=dx_1^2+e^{\int_c^{x_1}H(u)du}vdx_2^2+e^{\int_c^{x_1}H(u)du}dx_3^2$ by Lemma \ref{23second}. Thus, $\lambda_1=-H'-\frac{H^2}{2}$ depends only on $x_1$. However, $\lambda_2$ is also a function of $x_1$ only, and this implies that $E_3f=0$ by Lemma \ref{e3mu2tam}. Therefore, $\Gamma_{11}^3$ cannot be identically zero.

Now, we assume that $\Gamma_{11}^3\neq0$.   Since we assumed that $\mu_2$ is not a constant, by Lemma \ref{giftlem}, we have
\begin{align}
  \sqrt{g_{11}}(\mu_2-\mu_1)=\sqrt{g_{11}}(a_1+f)^2(3\lambda_2-R)=c_3(x_1)\label{findff}
\end{align}
for a positive function $c_3$. Then, we can get $g_{11}=p_3(x_3)c_6(x_1)g_{33}$ for positive functions $p_3$ and $c_6$ by Lemma \ref{e3mu2tam}.
By substituting $c_6dx_1^2=dx_1^2$, we can write the metric $g$ as
\begin{align*}
  g=p_3g_{33}dx_1^2+vg_{33}dx_2^2+g_{33}dx_3^2.
\end{align*}
By (\ref{gammacord}), $ H(x_1)=\frac{\partial_1g_{33}}{g_{33}\sqrt{g_{11}}}=\frac{1}{\sqrt{p_3}}g_{33}^{-\frac{3}{2}}\partial_1g_{33}.$
By integrating, we get $g_{33}=\frac{1}{\{\int (-H/2)dx_1+p_4(x_3)\}^2p_3}$ for a function $p_4$. By changing the variable,
\begin{align*}
  g=\frac{1}{\{q(x_3)+b(x_1)\}^2}dx_1^2+\frac{v}{(q+b)^2p_3}dx_2^2+\frac{1}{(q+b)^2}dx_3^2
\end{align*}
for functions $q$ and $b$, where $b'=-\frac{H(x_1)}{2}$. Assuming that $\Gamma_{11}^3\neq 0$, (\ref{r12211331}) gives $\partial_3g_{11}=c_7(x_1)\sqrt{g_{11}g_{22}g_{33}}$ as in the proof of Lemma \ref{23second}.
Thus, we can get $q'=C\sqrt{\frac{v}{p_3}}$ for a constant $C$. Hence, the metric $g$ can be written as in (\ref{113notzerometric}). By (\ref{findff}) and Lemma \ref{e3mu2tam}, $c_3(a_1+f)=\sqrt{g_{11}}(a_1+f)^3(3\lambda_2-R)=\sqrt{g_{11}}c_5$. Thus, the potential function $f=\frac{c_8}{q+b}-a_1$ for a function $c_8(x_1)$. Note that if $q'=0$, then $q$ is a constant, which implies that $\Gamma_{11}^3=\frac{E_3g_{11}}{g_{11}}=0$. Thus, $q$ cannot be a constant. Further, if $b$ is a constant, then $H=0$, which implies that $\mu_2$ is a constant. Hence, $b$ also cannot be a constant.
\end{proof}

The curvature components in this coordinate system are calculated as follows:
\begin{align*}
  &H=-2b',\quad \Gamma_{11}^3=q',\quad \Gamma_{22}^3=q'-\frac{q''}{q'}(q+b),\\
  &R_{1221}=(q+b)(q''+b'')-(q')^2-(b')^2,\\
  &R_{2332}=2(q+b)q''-\frac{q'''}{q'}(q+b)^2-(q')^2-(b')^2.
\end{align*}

By Lemma \ref{e3mu2tam},
\begin{align*}
  \sqrt{p_2}(q+b)^3=\sqrt{p_2}g_{33}^{-\frac{3}{2}}=3\lambda_2-R=c_5(a_1+f)^{-3}=c_5c_8^{-3}(q+b)^3.
\end{align*}
Thus, $\sqrt{p_2}=c_5c_8^{-3}$ is a constant, and the Ricci eigenvalues are
\begin{align*}
  \lambda_2=-m(q+b)^3+\frac{R}{3},\quad  \lambda_1=2m(q+b)^3+\frac{R}{3}\\
\end{align*}
for a constant $m\neq0$.

\begin{lemma}\label{cpeode}
  The functions $b$ and $q$ in Lemma \ref{metriclemtam} are solutions of the following ODE, respectively:
  \begin{align}
     &(q')^2-2mq^3-lq^2+\alpha q+k=0 \label{solq}\\
     &(b')^2-2mb^3+lb^2+\alpha b+\frac{R}{6}-k=0 \label{solb}
  \end{align}
  for constants $m\neq 0,l,\alpha,k$. And $c_8$ satisfies $ b''c_8=b'c_8'+a_2+\frac{a_1R}{2}$. Conversely, any metric $g$ and $f$ of the form in Lemma \ref{metriclemtam} satisfying the above differential equations satisfy (\ref{generaldef}).
\end{lemma}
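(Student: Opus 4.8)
The plan is to turn the curvature data computed just above into two scalar equations coupling $q(x_3)$ and $b(x_1)$, to separate variables in them, and finally to read off the $c_8$ relation from a single Hessian component. Since $\lambda_2=\lambda_3$ forces $R_{1221}=R_{1331}$ (see (\ref{r12211331})), the eigenvalue identities $\lambda_1=R_{1221}+R_{1331}$ and $\lambda_2=R_{1221}+R_{2332}$ together with the values $\lambda_1=2m(q+b)^3+\tfrac{R}{3}$, $\lambda_2=-m(q+b)^3+\tfrac{R}{3}$ give $R_{1221}=m(q+b)^3+\tfrac{R}{6}$ and $R_{2332}=-2m(q+b)^3+\tfrac{R}{6}$. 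Substituting the explicit formulas for $R_{1221}$ and $R_{2332}$ listed above yields the two master equations
\begin{align*}
(q+b)(q''+b'')-(q')^2-(b')^2 &= m(q+b)^3+\tfrac{R}{6},\\
2(q+b)q''-\tfrac{q'''}{q'}(q+b)^2-(q')^2-(b')^2 &= -2m(q+b)^3+\tfrac{R}{6}.
\end{align*}
Subtracting the second from the first and dividing by $q+b\neq0$ produces the cleaner relation $b''-q''+\tfrac{q'''}{q'}(q+b)=3m(q+b)^2$.

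The key step is separation of variables in this relation. Applying $\partial_1$ and dividing by $b'$ (nonzero, since $b$ is nonconstant by Lemma \ref{metriclemtam}) gives $\tfrac{b'''}{b'}+\tfrac{q'''}{q'}=6m(q+b)$, hence $\tfrac{b'''}{b'}-6mb=6mq-\tfrac{q'''}{q'}$. As the left side depends only on $x_1$ and the right only on $x_3$, both equal a constant; with a suitable sign convention this reads $\tfrac{q'''}{q'}=6mq+l$ and $\tfrac{b'''}{b'}=6mb-l$. Integrating each once gives the second-order ODEs $q''=3mq^2+lq+C_2$ and $b''=3mb^2-lb+C_1$ for integration constants $C_1,C_2$; multiplying by $q'$ and $b'$ respectively and integrating once more yields the first integrals $(q')^2=2mq^3+lq^2+2C_2q+2D_2$ and $(b')^2=2mb^3-lb^2+2C_1b+2D_1$.

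To pin down the constants I substitute these four relations back into the first master equation. Using $(q+b)^3=(q^3+b^3)+3qb(q+b)$, the $l$-terms cancel and the identity collapses to $(C_1-C_2)(q-b)-2(D_1+D_2)=\tfrac{R}{6}$. Since $q-b$ is nonconstant, this forces $C_1=C_2$; writing $C_1=C_2=-\tfrac{\alpha}{2}$ and $k:=-2D_2$ then gives exactly (\ref{solq}) and (\ref{solb}), the remaining relation being fixed by $2(D_1+D_2)=-\tfrac{R}{6}$. A count confirms consistency: the five constants $l,C_1,C_2,D_1,D_2$ are cut to the three claimed parameters $l,\alpha,k$ (with $m$ read off from the eigenvalues).

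Finally, to obtain the $c_8$ relation I compute $\nabla df(E_3,E_3)=E_3E_3f-b'E_1f$ directly from $f=c_8(q+b)^{-1}-a_1$ and the covariant-derivative formulas of Lemma \ref{nabla} (recall $\nabla_{E_3}E_3=-\tfrac{H}{2}E_1=b'E_1$, and $\Gamma_{11}^3=q'$), and set it equal to the prescribed value $(a_1+f)\lambda_2+(a_2-\tfrac{R}{2}f)$ from (\ref{generaldef}). After substituting the ODEs (\ref{solq}) and (\ref{solb}), all terms that are powers of $q+b$ reorganize into $(q+b)b''$ and cancel against the right-hand side, leaving precisely $c_8b''-b'c_8'=a_2+\tfrac{a_1R}{2}$. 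The converse is a direct verification that these ODEs make every Hessian component of $g$ agree with (\ref{generaldef}), the off-diagonal components being automatic since $E_2f=0$ and $\nabla df(E_1,E_3)$ vanishes identically. I expect the main obstacle to be the separation-of-variables bookkeeping: isolating a genuine separation constant from the nonlinearly coupled master equations and then tracking the integration constants cleanly down to the three stated parameters.
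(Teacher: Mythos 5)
Your proposal is correct, and I checked the two computations on which it hinges: substituting the first integrals into the first master equation does collapse to $(C_1-C_2)(q-b)-2(D_1+D_2)=\tfrac{R}{6}$, and the $(E_3,E_3)$ Hessian component together with the ODEs does reduce to $c_8b''-b'c_8'=a_2+\tfrac{a_1R}{2}$. However, the middle of your argument follows a genuinely different route from the paper. The paper cannot separate variables from the curvature identities alone as you do; instead it writes down the $(E_1,E_1)$ and $(E_2,E_2)$ Hessian equations (\ref{lambda1tam2}), (\ref{lambda2tam2}) alongside the curvature equations (\ref{lambda1tam}), (\ref{lambda2tam}), extracts the $c_8$ relation (\ref{relbc8}) \emph{first} by comparing (\ref{lambda1tam}) with (\ref{lambda2tam2}), uses it to replace $\tfrac{c_8''}{c_8}$ by $\tfrac{b'''}{b'}$ in the remaining comparison to obtain $\bigl(\tfrac{q'''}{q'}-\tfrac{b'''}{b'}\bigr)(q+b)=2(q''-b'')$, and only then separates variables; the constants are matched via that relation ($\alpha=\beta$) and by plugging back into (\ref{generaldef}) ($k_1+k_2=\tfrac{R}{6}$). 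You instead achieve separation purely inside the curvature equations by applying $\partial_1$ to the combined identity $b''-q''+\tfrac{q'''}{q'}(q+b)=3m(q+b)^2$ and dividing by $b'$, pin all constants by back-substitution into the same curvature identity, and bring in the potential function only at the very end, via $(E_3,E_3)$ (which, for this ansatz, is identically equal to the paper's $(E_2,E_2)$ component). What each approach buys: yours cleanly decouples the geometry of $g$ (the ODEs for $q$ and $b$ depend on the metric alone) from the potential data $c_8$, at the cost of one extra differentiation and the need to work where $b'\neq0$ (a harmless caveat, since $b$ is nonconstant and everything is analytic, and the paper implicitly divides by $b'$ too when it deduces $\tfrac{c_8''}{c_8}=\tfrac{b'''}{b'}$); the paper's interwoven route obtains the $c_8$ relation by a purely algebraic comparison of two displayed equations, with no need to substitute the ODEs back in.
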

\begin{proof}
  From $\lambda_1=2R_{1221}$ and $\lambda_2=R_{1221}+R_{2332}=\frac{1}{2}\lambda_1+R_{2332}$, we get
  \begin{align}
   & m(q+b)^3+\frac{R}{6}=(q+b)(q''+b'')-(q')^2-(b')^2. \label{lambda1tam}\\
   & 2m(q+b)^3-\frac{R}{6}=\left\{\frac{q'''}{q'}(q+b)-2q''\right\}(q+b)+(q')^2+(b')^2 \label{lambda2tam}
 \end{align}
Assigning $(E_1,E_1)$ and $(E_2,E_2)$ to (\ref{generaldef}),
\begin{align}
 &2m(q+b)^3-\frac{R}{6}=\left\{\frac{c_8''}{c_8}(q+b)-\frac{c_8'b'}{c_8}-b''-\frac{1}{c_8}\left(a_2+\frac{a_1R}{2}\right)\right\}(q+b)+(q')^2+(b')^2 \label{lambda1tam2}\\
 & m(q+b)^3+\frac{R}{6}=\left\{\frac{b'c_8'}{c_8}+q''+\frac{1}{c_8}\left(a_2+\frac{a_1R}{2}\right)\right\}(q+b)-(q')^2-(b')^2 \label{lambda2tam2}
\end{align}
 By (\ref{lambda1tam}) and (\ref{lambda2tam2}), we get
 \begin{align}
   b''c_8=b'c_8'+a_2+\frac{a_1R}{2}\label{relbc8}
 \end{align}
  By (\ref{lambda2tam}) and (\ref{lambda1tam2}), we get
  \begin{align}
    \left(\frac{q'''}{q'}-\frac{c_8''}{c_8}\right)(q+b)=2q''-b''-\frac{c_8'b'}{c_8}-\frac{1}{c_8}\left(a_2+\frac{a_1R}{2}\right). \label{ddee}
  \end{align}
 By (\ref{relbc8}), $\frac{c_8'b'}{c_8}+\frac{y}{c_8}=b''$ and $\frac{c_8''}{c_8}=\frac{b'''}{b'}$. Thus (\ref{ddee}) becomes
 \begin{align}
   \left(\frac{q'''}{q'}-\frac{b'''}{b'}\right)(q+b)=2(q''-b'') \label{ddee2}
 \end{align}
 Adding (\ref{lambda1tam}) and (\ref{lambda2tam}),
 \begin{align*}
   3m(q+b)^3=&\left(b''-q''+\frac{q'''}{q'}(q+b)\right)(q+b)\\
   =&\frac{q'''}{q'}(q+b)^2-\frac{1}{2}\left(\frac{q'''}{q'}-\frac{b'''}{b'}\right)(q+b)^2\quad \because(\ref{ddee2}).
 \end{align*}
Since $q=q(x_3)$ and $b=b(x_1)$, we get $-\frac{b'''}{b'}+6mb=\frac{q'''}{q'}-6mq=l$ for a constant $l$. By integrating,
\begin{align}
  (q')^2-2mq^3-lq^2+\alpha q+k_1=0,\quad (b')^2-2mb^3+lb^2+\beta b+k_2=0 \label{ddee3}
\end{align}
for constants $\alpha,\beta,k_i$. Putting (\ref{ddee3}) in (\ref{ddee2}) shows that $\alpha=\beta$. Assigning these to (\ref{generaldef}), we get $k_1+k_2=\frac{R}{6}$.

 The converse part
can be easily checked by direct computations.
\end{proof}

\begin{remark}
The metric $g$ in Lemma \ref{metriclemtam} cannot be defined on a compact manifold. For $g$ to be so, $\tilde{g}=\frac{(q')^2}{(q+b)^2}dx_2^2+\frac{1}{(q+b)^2}dx_3^2$ for a given $x_1$ should be a metric on a sphere.
By $dx_3=\frac{dq}{q'}$ and (\ref{solq}), $\tilde{g}$ becomes $  \tilde{g}=\frac{2mq^3+lq^2-\alpha q-k}{(q+b)^2}dx_2^2+\frac{1}{(q+b)^2(2mq^3+lq^2-\alpha q-k)}dq^2$. Let $r:=\frac{1}{(q+b)}$. Then, $\tilde{g}=f(r)dx_2^2+\frac{1}{f(r)}dr^2$, where $f(r)=(l-6mb)+(lb^2-2mb^3+\alpha b-k)r^2
+(6mb^2-2bl-\alpha)r+\frac{2m}{r}$. Let $dt:=\frac{dr}{\sqrt{f(r)}}$. Then, $t=\int \frac{dr}{\sqrt{f(r)}}$ and the metric $\tilde{g}$ becomes
$dt^2+f(t(r))dx_2^2$.  Suppose that $\tilde{g}$ is a metric
on a sphere. This means that there exist two points $a\neq b$ such that $f(a)=f(b)=0$, $f'(a)+f'(b)=0$, and $f'(a),f'(b)\neq0$. However, note that
\begin{align*}
  \frac{df}{dt}=(lb^2-2mb^3+\alpha b-k)2r(\frac{dr}{dt})+(6mb^2-2bl-\alpha)(\frac{dr}{dt})-\frac{2m}{r^2}(\frac{dr}{dt}).
\end{align*}
Since $\frac{dr}{dt}=\sqrt{f(r)}$, if $f(a)=0$, then $f'(a)=0$. Thus, $g$ cannot be a compact metric.

\end{remark}
\medskip

Now, we consider the case that $\mu_2$ is a constant. Suppose that $\mu_2$ is a constant. By $\frac{E_1\mu_2}{\mu_2-\mu_1}=<\nabla_{E_2}E_2,E_1>=- \frac{H}{2}=-\frac{E_1g_{33}}{2g_{33}}=0$. Hence, the metric $g$ in Theorem \ref{metricthm1} can be written as
\begin{align*}
  g=g_{11}(x_1,x_3)dx_1^2+v(x_3)dx_2^2+dx_3^2.
\end{align*}

\begin{lemma}\label{cpe2233}
  Let $(M^3,g,f)$ be a three-dimensional Riemannian manifold satisfying (\ref{generaldef}) with $\lambda_1\neq\lambda_2=\lambda_3$. Suppose that $E_3f\neq0$ and  $\mu_2$ is a constant. Then, $(M^3,g,f)$ must be a critical point metric, and there exists a local coordinate system $(x_1,x_2,x_3)$ such that
  \begin{align}
    g=p^2dx_1^2+(p')^2dx_2^2+dx_3^2,\quad f=c_1p-1,
  \end{align}
where $p(x_3)$ satisfies $(p')^2=\beta p^{-1}+\gamma$ for constants $\beta<0$ and $\gamma$, and $c_1(x_1)$ satisfies $c_1''+\gamma c_1=0$. Conversely, any
metrics $g$ and $f$ in the above form satisfy (\ref{cpedef}).

\end{lemma}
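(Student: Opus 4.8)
The plan is to start from the reduced metric $g=g_{11}(x_1,x_3)\,dx_1^2+v(x_3)\,dx_2^2+dx_3^2$ already obtained just before the lemma (where $g_{33}=1$), sharpen it to the claimed form, solve for $f$, and then read off every constant by substituting into (\ref{generaldef}).

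First I would pin down the metric. Since $H=0$ here, the computation in Lemma \ref{e3mu2tam} gives $E_1\lambda_2=(3\lambda_2-R)(-H/2)=0$, so $\lambda_2$, and hence $\lambda_1=R-2\lambda_2$ (recall $R$ is constant), depends only on $x_3$. As $\Gamma_{22}^3=-v'/2v$ is a function of $x_3$ and $\lambda_1=-2\Gamma_{11}^3\Gamma_{22}^3$ by Lemma \ref{nabla}, the coefficient $\Gamma_{11}^3=-\partial_3 g_{11}/2g_{11}$ is a function of $x_3$ alone, so $g_{11}=A(x_1)B(x_3)$; absorbing $A$ into $x_1$ I write $g_{11}=p(x_3)^2$. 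The possibility $\Gamma_{11}^3\equiv0$ I would rule out directly: it forces $\lambda_1=0$, and then the $(E_1,E_1)$ and $(E_1,E_3)$ components of (\ref{generaldef}) give $\partial_1\partial_3 f=0$ and $\partial_1\partial_1 f=a_2-\tfrac{R}{2}f$, whose compatibility yields $R\,\partial_3 f=0$, i.e.\ $R=0$ and then $\lambda_2=0=\lambda_1$, a contradiction. With $\Gamma_{11}^3\neq0$, equation (\ref{r12211331}) integrates to $v=\mathrm{const}\cdot(p')^2$, and rescaling $x_2$ gives $v=(p')^2$, so $g=p^2\,dx_1^2+(p')^2\,dx_2^2+dx_3^2$. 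From $\nabla df(E_2,E_2)=\nabla df(E_3,E_3)$ (Lemma \ref{23second}) I get $\partial_3 f=c_1(x_1)p'$, hence $f=c_1(x_1)p+d(x_1)$, and the vanishing of $\nabla df(E_1,E_3)$ forces $p'd'=0$, so $d$ is constant.

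The heart of the argument is the substitution of $g$ and $f=c_1p+d$ into (\ref{generaldef}), using $\lambda_1=-2p''/p$ and $\lambda_2=-p''/p-p'''/p'$. The $(E_2,E_2)$ equation, after collecting the $c_1$-terms, takes the separated form $c_1\bigl[2p''+\tfrac{p p'''}{p'}+\tfrac{R}{2}p\bigr]=(a_1+d)\lambda_2+\bigl(a_2-\tfrac{R}{2}d\bigr)$; differentiating in $x_1$ shows that, when $c_1$ is non-constant, the bracket vanishes, and since $\lambda_2$ is non-constant (if it were constant, the curvature formulas would force $\lambda_1=\lambda_2$) the right-hand side forces $a_1+d=0$ and $a_2=\tfrac{R}{2}d$. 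Matching these to the two admissible types rules out the $V$-static case ($a_1=0$, $a_2=-\kappa/2\neq0$) and, in the critical-point case ($a_1=1$, $a_2=-R/3$), forces $-R/2=-R/3$, i.e.\ $R=0$; thus $a_1=1$, $a_2=0$, $d=-1$, and $f=c_1p-1$. Feeding $R=0$ and $a_1+d=0$ back into the $(E_1,E_1)$ equation gives $c_1''+c_1\bigl[(p')^2+2pp''\bigr]=0$, which separates into $c_1''+\gamma c_1=0$ and $(p')^2+2pp''=\gamma$; the latter integrates, via $\tfrac{d}{dx_3}[p(p')^2]=\gamma p'$, to $(p')^2=\beta p^{-1}+\gamma$.

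Two loose ends remain. The degenerate sub-case $c_1\equiv\mathrm{const}$, in which $\nabla f$ is itself Ricci-eigen, I would treat by integrating the constant-curvature relation to $(p')^2+2pp''=-\tfrac{R}{2}p^2+2C_0$ and substituting into the difference of the $(E_1,E_1)$ and $(E_2,E_2)$ equations; ruling out $a_1+d\neq0$ (which again forces $\lambda_1=\lambda_2$) recovers the same conclusion with $\gamma=0$. For the sign, $\beta\neq0$ (otherwise $(p')^2$ is constant and $\lambda_1=\lambda_2$), and since replacing $(p,c_1)$ by $(-p,-c_1)$ leaves both $g$ and $f$ unchanged while sending $\beta\mapsto-\beta$, I may normalize $\beta<0$; the converse is a direct verification against (\ref{cpedef}). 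I expect the main obstacle to be the bookkeeping in this separation step: correctly extracting $a_1+d=0$ and $a_2=\tfrac{R}{2}d$, forcing $R=0$ so as to identify the space as a critical-point metric and to exclude the $V$-static case, all while handling the $c_1\equiv\mathrm{const}$ sub-case on the same footing.
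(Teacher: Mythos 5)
Your proposal is correct and follows essentially the same route as the paper's proof: the same reduction to $g=p^2dx_1^2+(p')^2dx_2^2+dx_3^2$ with $f=c_1p+d$ ($d$ constant), the same constraints $a_1+d=0$, $a_2=\tfrac{R}{2}d$ that kill the $V$-static case and force $R=0$ in the critical-point case, the same $\lambda_1=\lambda_2$ contradictions in the degenerate situations, and the same dismissal of $\Gamma_{11}^3\equiv 0$. The only deviations are minor — you obtain the constraints by separating variables in the $(E_2,E_2)$ equation instead of $\partial_3$-differentiating the $(E_1,E_1)$ equation as the paper does, and you explicitly settle two points the paper leaves implicit (the sub-case $c_1\equiv\mathrm{const}$ and the normalization $\beta<0$ via $(p,c_1)\mapsto(-p,-c_1)$).
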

\begin{proof}
If $\Gamma_{11}^3 \neq 0$, then $\partial_3g_{11}=c_2\sqrt{g_{11}v}$ by  Lemma \ref{23second}. By integrating, $g_{11}=c_2^2\left(\int\frac{\sqrt{v}}{2}dx_3+a(x_1)\right)^2$ for
a function $a$. By substituting $c_2^2dx_1^2=dx_1^2$ and $4dx_2^2=dx_2^2$, we may write
\begin{align}
  g=(p(x_3)+a(x_1))^2dx_1^2+(p')^2dx_2^2+dx_3^2 \label{ddee33}
\end{align}
for a function $p(x_3)$. In this coordinate, the curvature components are $R_{1221}=-\frac{p''}{p+a}$ and $R_{2332}=-\frac{p'''}{p'}$.
However, note that $E_1\mu_2=0$ means that $\partial_1\lambda_2=0$ by (\ref{e1mu2dir}). Hence, $a(x_1)$ must be a constant function. Therefore, the metric is
\begin{align*}
  g=p^2dx_1^2+(p')^2dx_2^2+dx_3^2
\end{align*}
and the Ricci curvatures are $\lambda_1=-2\frac{p''}{p}$, $\lambda_2=-\frac{p''}{p}-\frac{p'''}{p'}$. By $\partial_3f=c_1g_{33}\sqrt{v}$ in Lemma \ref{commonmetric},  $f=c_1(p+c_2)$ for a function $c_2$.

Then,
\begin{align*}
  0=\nabla df(E_3,E_1)=E_3E_1f=-\frac{p'}{p^2}(c_1'c_2+c_1c_2').
\end{align*}
Thus, $f=c_1p+\alpha$ for a constant $\alpha$. From $\nabla df(E_1,E_1)$ and $\nabla df(E_2,E_2)$,
\begin{align}
  c_1''+c_1(p')^2=-2p''(a_1+c_1p+\alpha)+a_2p-\frac{R}{2}p(c_1p+\alpha) \label{1111}\\
  c_1p''=(a_1+c_1p+\alpha)(-\frac{p''}{p}-\frac{p'''}{p'})+a_2-\frac{R}{2}(c_1p+\alpha) \label{2222}.
\end{align}

Taking the $\partial_3$-derivative of (\ref{1111}) and using $R=-4\frac{p''}{p}-2\frac{p'''}{p'}$, we get
$c_1p''=\frac{p''}{p}(a_1+c_1p+\alpha)+\frac{R}{4}a_1+\frac{R}{8}\alpha+\frac{a_2}{4}$. Comparing with (\ref{2222}),
\begin{align}
  (a_1+\alpha)p''+(\frac{R}{2}a_1+a_2)p=0\\
  Ra_1+3a_2-\frac{R}{2}\alpha=0.
\end{align}
If $a_1+\alpha\neq0$, then $\frac{p''}{p}=\frac{p'''}{p'}=-\frac{\frac{R}{2}a_1+a_2}{a_1+\alpha}$, which implies that $\lambda_1=\lambda_2$, which is a contradiction. Therefore, $a_1+\alpha=\frac{R}{2}a_1+a_2=0$. This equation cannot be satisfied in  $V$-static spaces but only in critical point spaces under the condition of $R=0$. If $R=0$, then $-2\frac{p''}{p}-\frac{p'''}{p'}=0$; hence, $p$ is a solution of $(p')^2=\beta p^{-1}+\gamma$ for constants $\beta<0$ and $\gamma$. By (\ref{1111}), $c_1$ satisfies $c_1''+\gamma c_1=0$.

Now, suppose that $\Gamma_{11}^3=0$. Then, the metric $g$ can be written as
\begin{align*}
  g=dx_1^2+p(x_3)^2dx_2^2+dx_3^2.
\end{align*}
From $\nabla df(E_3,E_1)=\partial_3\partial_1f=0$, we have $f=f_1(x_1)+f_3(x_3)$ for functions $f_1(x_1)$ and $f_3(x_3)$. On the other hand, $\nabla df(E_1,E_1)=\partial_1\partial_1f=a_2-\frac{R}{2}f$ implies that  $R=0$, because we assumed that $E_3f\neq0$. However, note that if $R=0$, then $\lambda_1=\lambda_2=0$, which is a contradiction.

The converse part can be easily checked.
\end{proof}
\begin{remark}
 The metric $g$ in Lemma \ref{cpe2233} cannot be defined on a compact manifold. For the metric $g$ to be on a compact metric, there should exist two points $a\neq b$ such
 that $p'(a)=p'(b)=0$ and $p''(a)+p''(b)=0$ \cite{Pe}. If $p'(a)=p'(b)=0$, then $0=(p'(a))^2=\beta p(a)^{-1}+\gamma=\beta p(b)^{-1}+\gamma=(p'(b))^2=0$, which implies that $p(a)=p(b)$. However, then $p''(a)+p''(b)=-\frac{\beta}{2}(\frac{1}{p(a)^2}+\frac{1}{p(b)^2})=-\frac{\beta}{p(a)^2}$ cannot be
 zero unless $\beta=0$.

\end{remark}
\begin{remark}
  In \cite{Ko}, Kobayashi studied the behavior of the solution to $k=(r')^2+\frac{2a}{n-2}r^{2-n}+\frac{R}{n(n-1)}r^2$. Our equation in Lemma \ref{cpe2233} corresponds to the case ${\rm (IV.1)}$ in the list of \cite[p 670]{Ko}.
  One can check that the metric $g$ in Lemma \ref{cpe2233} is complete by simple calculus; see \cite[Lemma 4.5]{JJ}.
\end{remark}

\section{Three-dimensional Ricci-degenerate manifolds with $E_3f=0$}

In this section, we deal with the case where $E_3f=0$. Unlike the previous section, this section covers the three spaces simultaneously.  First, consider adapted frame $\{E_i,i=1,2,3\}$. Then $E_3f=0$ implies
that $\nabla f$ is parallel to $E_1$. We may set $\frac{\nabla f}{|\nabla f|}=E_1$. Then we can prove the following lemma by standard argument; see \cite{CMMR} or \cite{Ki}.
\begin{lemma}\label{threesolb1}
  Let $(M^3,g,f)$ be a three-dimensional Ricci-degenerate Riemannian manifold with $\lambda_1\neq\lambda_2=\lambda_3$ satisfying (\ref{ggrsdef}). Let $c$ be a regular value of $f$ and
  $\Sigma_c=\{x|f(x)=c\}$. If $\frac{\nabla f}{|\nabla f|}=E_1$, then the following hold.

{\rm  (i)} $R$ and $|\nabla f|^2$ are constant on a connected component of $\Sigma_c$.

 {\rm (ii)} There is a function $s$ locally defined with $s(x)=\int\frac{df}{|\nabla f|}$, so that $ds=\frac{df}{|\nabla f|}$ and $E_1=\nabla s$.

{\rm  (iii)} $\nabla_{E_1}E_1=0$.

{\rm  (iv)} $\lambda_1$ and $\lambda_2$ are constant on a connected component of $\Sigma_c$ and so depend on the local variable $s$ only.

{\rm (v)}  Near a point in $\Sigma_c$, the metric $g$ can be written as

$\ \ \ g= ds^2 +  \sum_{i,j >  1} g_{ij}(s, x_2, \cdots  x_n) dx_i \otimes dx_j$, where
    $x_2, \cdots  x_n$ is a local coordinates system on $\Sigma_c$.

{\rm (vi)} $\nabla_{E_i}E_1=\zeta(s) E_i, \textrm{$i=2,3$ with }\zeta(s)=\frac{\psi \lambda_i+\phi}{|\nabla f|}$  and
$g(\nabla_{E_i}E_i, E_1)=-\zeta $.

\end{lemma}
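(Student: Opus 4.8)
The plan is to extract everything from one elementary observation: in the adapted eigenframe $\{E_i\}$ the equation (\ref{ggrsdef}) forces the Hessian $\nabla df$ to be diagonal, with
\[
\nabla df(E_i,E_j)=(\psi\lambda_i+\phi)\,\delta_{ij},
\]
since $Rc(E_i,E_j)=\lambda_i\delta_{ij}$ and $g(E_i,E_j)=\delta_{ij}$. Because $\nabla f$ is assumed parallel to $E_1$, we have $E_1f=|\nabla f|$ and $E_2f=E_3f=0$. I would record the two structural consequences of this at once. First, for $j=2,3$ the vanishing off-diagonal entry gives $0=\nabla df(E_1,E_j)=E_1(E_jf)-(\nabla_{E_1}E_j)f=-|\nabla f|\,g(\nabla_{E_1}E_j,E_1)=|\nabla f|\,g(E_j,\nabla_{E_1}E_1)$, so $\nabla_{E_1}E_1$ is orthogonal to $E_2,E_3$; since it is also orthogonal to the unit field $E_1$, this is exactly (iii). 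Second, differentiating $|\nabla f|^2$ along a level set, $E_j|\nabla f|^2=2\,\nabla df(E_j,\nabla f)=2|\nabla f|\,\nabla df(E_j,E_1)=0$ for $j=2,3$, so $|\nabla f|^2$ is constant on connected components of $\Sigma_c$.

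For the scalar curvature in (i), I would feed the hypothesis $\nabla f\parallel E_1$ into Lemma \ref{basiccodazzi}(i). Writing $\nabla\psi=\psi'\nabla f$ and using that $E_1$ is a Ricci eigenvector, $Rc(\nabla f,\cdot)=\lambda_1|\nabla f|E_1^\flat=\lambda_1\,df$, the right-hand side $2Rc(\nabla\psi-\nabla f,\cdot)=2(\psi'-1)\lambda_1\,df$ is a multiple of $df$; since $d\psi,d\phi$ are also multiples of $df$, the identity reduces to $\psi\,dR=(\text{function})\,df$, whence $dR\parallel df$ wherever $\psi\neq0$. Thus $E_jR=0$ for $j=2,3$ and $R$ is constant on level sets, completing (i); the locus $\psi=0$ is handled by density and the real-analyticity noted before Lemma \ref{giftlem}. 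With $|\nabla f|$ now constant on level sets it is locally a function $\rho(f)$ of $f$ alone, so $df/|\nabla f|=df/\rho(f)$ is closed and equals $dG(f)$ with $G'=1/\rho$; setting $s=G(f)$ gives $ds=df/|\nabla f|=E_1^\flat$ and $\nabla s=E_1$, which is (ii).

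Parts (iv) and (v) then follow mechanically. For (iv), the diagonal entry $\nabla df(E_1,E_1)=E_1(E_1f)=\frac{d}{ds}|\nabla f|$ is a function of $s$ alone and equals $\psi\lambda_1+\phi$; since $\psi,\phi$ depend only on $f$ (hence on $s$), $\lambda_1$ is a function of $s$ where $\psi\neq0$, and $\lambda_2=\tfrac12(R-\lambda_1)$ follows suit. For (v), $E_1=\nabla s$ is a unit field whose integral curves are geodesics by (iii), so I would build coordinates by choosing $(x_2,\dots,x_n)$ on a fixed level set and flowing along $E_1$; then $\partial_s=E_1$, $g(\partial_s,\partial_s)=1$, and $\partial_s g(\partial_s,\partial_{x_i})=g(E_1,\nabla_{\partial_{x_i}}E_1)=\tfrac12\partial_{x_i}|E_1|^2=0$ with vanishing initial value, giving the product form $g=ds^2+\sum_{i,j>1}g_{ij}(s,x_2,\dots)\,dx_i\,dx_j$.

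Finally, (vi) is read off the remaining Hessian entries. From $\nabla df(E_i,E_i)=-(\nabla_{E_i}E_i)f=-|\nabla f|\,g(\nabla_{E_i}E_i,E_1)$ for $i=2,3$ I obtain $g(\nabla_{E_i}E_i,E_1)=-(\psi\lambda_i+\phi)/|\nabla f|=:-\zeta$, and the off-diagonal vanishing $\nabla df(E_i,E_j)=0$ gives $g(\nabla_{E_i}E_j,E_1)=0$ for distinct $i,j\in\{2,3\}$; combining these with $g(\nabla_{E_i}E_1,E_1)=0$ yields $\nabla_{E_i}E_1=\zeta E_i$, and $\zeta=\zeta(s)$ because each factor is already known to depend on $s$ only. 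I expect the one genuinely nontrivial step to be the constancy of $R$ on the level sets in (i): unlike the purely algebraic diagonalization of the Hessian, it requires the Codazzi-type identity of Lemma \ref{basiccodazzi} together with the eigenvector hypothesis, and a small amount of care near the zero set of $\psi$; everything else is bookkeeping once the Hessian is known to be diagonal.
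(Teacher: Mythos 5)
Your proof is correct and follows essentially the same route as the paper's: diagonality of the Hessian $\nabla df=\psi Rc+\phi g$ in the adapted frame, Lemma \ref{basiccodazzi}(i) combined with $Rc(\nabla f,\cdot)=\lambda_1\,df$ to obtain $E_jR=0$, closedness of $df/|\nabla f|$ for (ii), the unit gradient field $\nabla s$ for (v), and reading the connection coefficients off the Hessian entries for (iii) and (vi). The only difference is that you spell out details the paper leaves implicit, including the behavior at the zero locus of $\psi$, which the paper silently assumes away.
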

\begin{proof}
By assumption, for $i=2,3$,  $R(\nabla f, E_i) =0$ and Lemma \ref{basiccodazzi} (i) gives $E_i(R) =0$. Equation {\rm (\ref{ggrsdef})} gives $E_i(|\nabla f|^2) =0$. We can see $d( \frac{df}{|df |} )=0$.
$g(\nabla_{E_1} E_1 , E_1)=0$ is trivial. We can get
$g(\nabla_{E_1} E_1 , E_i)=g(\nabla_{E_1} (\frac{\nabla f}{|\nabla f |} ), E_i) =0$ from  {\rm (\ref{ggrsdef})}. (i), (ii) and (iii) are proved.
As $\nabla f$ and the level surfaces of $f$ are perpendicular, we get (v).

Assigning $(E_1,E_1)$ to (\ref{ggrsdef}), we have $E_1E_1f=\psi\lambda_1+\phi$. Since $\psi\neq0$, $\lambda_1$ is a function of $s$ only. Then
$\lambda_2$ also depends only on $s$ from the fact that $R=R(s)$.  So we proved {\rm (iv)} and {\rm (vi)}.
\end{proof}



\begin{lemma} \label{claim112b3}
Let  $(M^3,g,f)$ be  a three-dimensional Ricci-degenerate Riemannian manifold satisfying (\ref{ggrsdef}).
Suppose there exists a Codazzi tensor $C$ whose eigenspace coincide with Ricci eigenspace. Suppose that $\frac{\nabla f}{|\nabla f|}=  E_1$ and   $ \lambda_1  \neq \lambda_2=  \lambda_3$ for an adapted frame fields $\{ E_j\}$,
on an open subset $U$ of $\{ \nabla f \neq 0  \}$.

\smallskip
Then for each point $p_0$ in $U$, there exists a neighborhood $V$ of $p_0$ in $U$ with coordinates $(s, x_2, x_3)$  such that $\nabla s= \frac{\nabla f }{ |\nabla f |}$ and $g$ can be written on $V$ as
\begin{equation} \label{mtr1a3}
g= ds^2 +      h(s)^2 \tilde{g},
\end{equation}
 where  $h:=h(s)$ is a smooth function and
 $\tilde{g}$ is (a pull-back of) a Riemannian metric of constant curvature on a $2$-dimensional domain with $x_2, x_3$ coordinates.
In particular,  $g$ is locally conformally flat.


\end{lemma}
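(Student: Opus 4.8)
The plan is to show that, once $\nabla f/|\nabla f|=E_1$, the geometry is already rigid enough that both the warped structure and the constant curvature of the fibre fall out of Lemma~\ref{threesolb1}. First I would invoke that lemma to fix the local function $s$ with $E_1=\nabla s$ and the coordinates $(s,x_2,x_3)$ of its part~(v), in which $g=ds^2+\sum_{a,b\in\{2,3\}}g_{ab}(s,x_2,x_3)\,dx_a\,dx_b$; I would also record from its parts~(i) and (iv) that $|\nabla f|$, $\lambda_1$ and $\lambda_2$ are functions of $s$ alone. The first real step is to observe that the level sets $\Sigma_s=\{s=\mathrm{const}\}$ are totally umbilic. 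By part~(vi), $\nabla_{E_a}E_1=\zeta(s)\,E_a$ for $a=2,3$, where $\zeta(s)=(\psi\lambda_a+\phi)/|\nabla f|$; the two factors for $a=2$ and $a=3$ coincide precisely because $\lambda_2=\lambda_3$, so the shape operator of $\Sigma_s$ equals $-\zeta(s)\,\mathrm{Id}$ on the whole fibre. The same umbilic foliation, with umbilicity constant along leaves, is also delivered by the Codazzi hypothesis through Lemma~\ref{derdlem}(iii)--(iv), since the $\lambda_2$-eigenspace is two-dimensional.

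Next I would integrate the umbilicity to obtain the warped form. Expressed in the coordinates, $\nabla_{\partial_a}\partial_s=\zeta(s)\,\partial_a$ reads $\partial_s g_{ab}=2\zeta(s)\,g_{ab}$, a linear ODE in $s$ at each fixed $(x_2,x_3)$ whose solution is $g_{ab}(s,x)=h(s)^2\,\tilde g_{ab}(x)$ with $h(s):=\exp\!\big(\int_{s_0}^{s}\zeta\big)$ and $\tilde g_{ab}(x):=g_{ab}(s_0,x)$. The point that makes this clean is that $\zeta$ depends on $s$ only and is common to both fibre directions, so a single factor $h(s)^2$ pulls out of every metric component at once; this yields $g=ds^2+h(s)^2\tilde g$ with $\tilde g$ a genuine, $s$-independent metric on the $(x_2,x_3)$-domain.

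It remains to prove that $\tilde g$ has constant curvature, and this is the crux. I would compute the tangential Ricci curvature of the warped product, obtaining
\[
\lambda_2=\mathrm{Ric}(E_2,E_2)=\frac{\tilde K}{h(s)^2}-\frac{h''}{h}-\frac{(h')^2}{h^2},
\]
where $\tilde K=\tilde K(x_2,x_3)$ is the Gauss curvature of $(\tilde g)$; the precise coefficients of the $h$-terms are immaterial, what matters is that the only term carrying the fibre coordinates is $\tilde K/h^2$ (this is where the two-dimensionality of the fibre, hence $\widetilde{\mathrm{Ric}}=\tilde K\,\tilde g$, is used). Since the left-hand side is a function of $s$ alone by Lemma~\ref{threesolb1}(iv) and $h,h',h''$ depend only on $s$, the quantity $\tilde K/h(s)^2$ must depend only on $s$; being also a function of $(x_2,x_3)$ alone, $\tilde K$ is forced to be constant. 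Thus $\tilde g$ is a two-dimensional metric of constant curvature, and $g=ds^2+h(s)^2\tilde g$ is a warped product over a one-dimensional base with constant-curvature fibre, which is locally conformally flat (in dimension three the Weyl tensor vanishes automatically, and the Cotton tensor of such a metric vanishes once $\tilde K$ is constant).

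I expect the main obstacle to be this last passage to constant curvature rather than the production of the warped form, which is the standard part. The delicate point is to be sure that, apart from $\tilde K/h^2$, no surviving term of $\mathrm{Ric}(E_2,E_2)$ depends on the fibre coordinates, so that the $s$-only dependence of $\lambda_2$ can genuinely be converted into the constancy of $\tilde K$; this is exactly what the hypotheses $\lambda_2=\lambda_3$ (giving a single warping function $h$) and the two-dimensionality of the leaves secure.
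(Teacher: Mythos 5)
Your proof is correct, and its skeleton matches the paper's: totally umbilic level sets of $s$ with umbilicity factor $\zeta(s)$, integration of $\tfrac12\partial_s g_{ab}=\zeta(s)\,g_{ab}$ to extract a single warping function $h(s)$, and then a curvature identity converting the $s$-only dependence of a curvature quantity into constancy of the fibre curvature $\tilde K$. You deviate in two places, both toward economy. First, you obtain umbilicity straight from Lemma \ref{threesolb1}(vi), observing that $\lambda_2=\lambda_3$ makes the factor $(\psi\lambda_i+\phi)/|\nabla f|$ common to $i=2,3$, so the shape operator of each leaf is $\zeta(s)\,\mathrm{Id}$; the paper instead derives umbilicity from the Codazzi tensor via Lemma \ref{derdlem} and uses (vi) only to identify the umbilic factor with $\zeta$. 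Your route shows that the Codazzi hypothesis is actually superfluous for this particular lemma (it is essential elsewhere in the paper, e.g.\ in Lemma \ref{giftlem}). Second, for the crux, you compute the vertical Ricci curvature of the warped product, $\lambda_2=\tilde K/h^2-h''/h-(h')^2/h^2$, and invoke Lemma \ref{threesolb1}(iv) to force $\tilde K$ constant; the paper instead applies the Gauss--Codazzi equation on each leaf and uses constancy along leaves of the scalar curvature, of $Ric(E_1,E_1)$, and of the (umbilic) second fundamental form --- i.e.\ Lemma \ref{threesolb1}(i),(iv),(vi) --- to conclude each leaf, hence $\tilde g$, has constant curvature. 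The two closing arguments carry the same content (everything except the $\tilde K$-term depends on $s$ alone); yours is intrinsic and needs the warped-product Ricci formula, while the paper's is extrinsic and stays at the level of scalar identities on hypersurfaces. Either is complete.
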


 \begin{proof}
 The metric $g$  of Lemma \ref{threesolb1} (v) can be written as
\begin{equation} \label{ggg}
g= ds^2 +    g_{22}dx_2^2 +   g_{23} dx_2 \odot dx_3 + g_{33}dx_3^2,
\end{equation}
where $g_{ij}$ are functions  of $(x_1:=s, \ x_2, \  x_3)$.
one easily gets $E_1 =\frac{\partial }{\partial s} $.
 We write $\partial_{1}:=\frac{\partial }{\partial s}$ and $\partial_{1}:=\frac{\partial }{\partial x_i}, i=2,3$ .

We consider the second fundamental form $\tilde{ h}$ of a leaf for $E_{23}$ with respect to $E_1$;
$\tilde{ h}  ( u , u ) =  -  < \nabla_{u} u ,  E_1>  $. As the leaf is totally umbilic by Lemma  \ref{derdlem} {\rm (ii)}, $\tilde{ h} ( u , u ) =    \eta \cdot g( u , u) $ for some function  $\eta$ and any $u$ tangent to a leaf.
Then, $\tilde{ h} (E_2, E_2 ) = -  < \nabla_{E_2} E_2 ,  E_1> = \eta=   \zeta   $,  which is a function of $s$ only by  Lemma \ref{threesolb1} (vi).

For $i, j \in \{ 2,3 \}$,
\begin{eqnarray*}
\zeta  g_{ij}& =\tilde{ h} ( \partial_i , \partial_{j} )  =   -  < \nabla_{\partial_i}  \partial_{j} ,   \frac{\partial }{\partial s}> =  -  <\sum_k \Gamma^{k}_{i{j}}  \partial_k ,   \frac{\partial }{\partial s}  > \\
& =  - \sum_k <  \frac{1}{2} g^{kl}( \partial_i g_{lj} +\partial_{j} g_{li} - \partial_l g_{ij} )\partial_k ,  \frac{\partial }{\partial s} >       = \frac{1}{2} \frac{\partial }{\partial s} g_{i{j}}.
\end{eqnarray*}
 So,  $\frac{1}{2} \frac{\partial }{\partial s} g_{i{j}} =   \zeta g_{ij}$. Integrating it, for $i, j \in \{ 2,3 \}$, we get $ g_{ij} = e^{C_{ij}} h(s)^2$. Here the function $h(s)>0$ is independent of $i,j$ and each function $C_{ij}$ depends only on $x_2, x_3$.

 Now  $g$ can be  written as $g= ds^2 +     h(s)^2 \tilde{g} $, where $\tilde{g}$ can be viewed as a Rimannian metric in a domain of $(x_2, x_3)$-plane.

From Gauss-Codazzi equation,
$R^{g} = R^{\tilde{g}} + 2 Ric^{g}(E_1,E_1)+ \|h\|^2 - H^2$.  As all others are constant on a hypersurface of $w$, so is $R^{\tilde{g}}$. Therefore each hypersurface has constant curvature. Thus $\tilde{g}$ has constant curvature and
 $g$ is locally conformally flat.
 \end{proof}

Now, we can prove our theorems.
\begin{pf2}
  Combine Lemma \ref{gamma113}, Lemma \ref{solitonresult} and Lemma \ref{claim112b3}.
  \end{pf2}
\begin{pf3}
  Combine Lemma \ref{metriclemtam}, Lemma \ref{cpeode}, Lemma \ref{cpe2233} and Lemma \ref{claim112b3}.
\end{pf3}
\begin{pf4}
  Combine Lemma \ref{metriclemtam}, Lemma \ref{cpeode}, Lemma \ref{cpe2233} and Lemma \ref{claim112b3}.
\end{pf4}

\bigskip
\bigskip

\bigskip
\bigskip

\bigskip
\bigskip

\bigskip
\bigskip

\bigskip
\bigskip

\bigskip
\bigskip

\bigskip
\bigskip

\bigskip
\bigskip

\bigskip
\bigskip

\bigskip
\bigskip


\begin{thebibliography}{99}


\bibitem{BR} A. Barros,  E. Ribeiro Jr.,  {\em  Critical point equation on four-dimensional compact manifolds}, Math. Nachr.  287 no. 14-15 (2014), 1618-1623.

\bibitem{BBR} H. Baltzzar, R. Batista, and K. Bezerra,  On the volume functional of compact manifolds with boundary with harmonic Weyl tensor, http://arxiv.org/pdf/math/1710.06247.pdf  (2017).


\bibitem{BDRR}   R. Batista, R. Di\'{o}genes, M. Ranieri,  E. Ribeiro Jr, {\em Critical Metrics of the Volume Functional on Compact Three-Manifolds with Smooth Boundary}, J. Geom. Anal. 27 (2017), no. 2, 1530-1547.

\bibitem{Be}  A.L. Besse,  {\em Einstein manifolds}. Ergebnisse der Mathematik, 3 Folge, Band 10, Springer-Verlag, 1987.

\bibitem{BM} J. Bernstein and T. Mettler, {\em Two-dimensional gradient Ricci solitons revisited}, International Mathematics Research Notices no. 1 (2015), 78-98.

\bibitem{Br} S. Brendle, {\em Rotational symmetry of self-similar solutions to the Ricci flow}, Invent. Math. 194(3) (2013), 731-764.

\bibitem{Cao1} H.D. Cao, {\em Recent progress on Ricci solitons}, Recent advances in geometric
analysis, 138, Adv. Lect. Math. (ALM), 11, Int. Press, Somerville, MA, 2010.

\bibitem{CCCMM} H.D. Cao, G. Catino, Q. Chen and C. Mantegazza,
and L. Mazzieri,  {\em  Bach-flat gradient steady ricci solitons}, Calc. Var. Partial Differential Equations 49 no. 1-2 (2014), 125-138.

\bibitem{CC1}  H.D. Cao and Q. Chen, {\em On locally conformally flat gradient steady Ricci solitons},
Trans. Amer. Math. Soc. 364 (2012), 2377-2391.

\bibitem{CC2} H.D. Cao and Q. Chen, {\em On Bach-flat gradient shrinking Ricci solitons}, Duke
Math. J. 162 (2013), 1003-1204.

\bibitem{CCZ} H.D. Cao, B.L. Chen and X.P. Zhu, {\em Recent developments on hamilton's ricci flow},   Surveys in differential geometry. Vol. XII. Geometric flows,  47-112, Surv. Differ. Geom., 12, Int. Press, Somerville, MA, 2008.


\bibitem{CWZ} X. Cao, B. Wang and Z. Zhang, {\em On locally conformally flat gradient shrinking Ricci solitons}, Commun. Contemp. Math.  13  no. 2  (2011), 269-282.



 \bibitem{CM2} G. Catino, L. Mazzieri, S. Mongodi, {\em Rigidity of gradient Einstein shrinkers}, Commun. Contemp. Math. 17 (2015), no. 6, 1550046, 18pp.


\bibitem{CMMR} G. Catino, C. Mantegazza, L. Mazzieri, M. Rimoldi, {\em Locally conformally flat quasi-Einstein manifolds}, J. Reine Angew. Math., 2013, no. 675, 181-189(2013).

\bibitem{CM} G. Catino and C. Mantegazza, {\em The evolution of the Weyl tensor under the Ricci flow}, Ann. Inst. Fourier   (Grenoble)  61   no. 4 (2011), 1407-1435.

\bibitem{CMM} G. Catino, C. Mantegazza and L. Mazzieri, {\em  A note on Codazzi tensors },  Math. Annalen,
362 (2015), 629-638.


\bibitem{CEM} J. Corvino, M. Eichmair and P. Miao. {\em Deformation of scalar curvature and volume},  Math. Annalen, 357 (2013) 551-584.


\bibitem{De} A. Derdzi\'{n}ski, {\em Classification of Certain Compact Riemannian Manifolds
with Harmonic Curvature and Non-parallel Ricci Tensor}, Math. Zeit. 172  (1980), 273-280.



\bibitem{FG} M. Fern\'{a}ndez-L\'{o}pez and E. Garc\'{i}a-R\'{i}o, {\em Rigidity of shrinking Ricci solitons},
Math. Zeit. 269 (2011), 461-466.



\bibitem{HCY} S. Hwang, J. Chang and G. Yun,  {\em  Total scalar curvature and harmonic curvature}, Taiwanese Journal of mathematics, v. 18 no. 5, (2014) 1439-1458.

\bibitem{HY} S. Hwang, G. Yun, {\em Rigidity of the total scalar curvature with divergence-free Bach tensor}, https://arxiv.org/pdf/1710.08293.pdf.

\bibitem{HPW} C. He, P. Petersen and W. Wylie, {\em On the classification of warped product Einstein metrics},
Comm. in Anal. and Geom., v. 20, no. 2  (2012), 271-311.




\bibitem{Iv3} T. Ivey, {\em Ricci solitons on compact three-manifolds}, Differential Geom. Appl., 3(4) (1993), 301-307.

\bibitem{JEK} P. Jordan, J. Ehlers, W. Kundt, {\em Republication of: Exact solutions of the field equations of the general theory of relativity}, Gen. Relativity Gravitation 41 (2009),
no. 9, 2191-2280.


\bibitem{Ki} J. Kim,  {\em  On a classification of 4-d gradient Ricci solitons with harmonic Weyl curvature}, accepted for publication to Jour. Geom. Anal. 27 (2017), no. 2, 986-1012.

\bibitem{JJ} J. Kim and J. Shin {\em Three dimensional $m$-quasi Einstein manifolds with degenerate Ricci tensor}, https://arxiv.org/pdf/1712.03789.pdf

\bibitem{JJ2} J. Kim and J. Shin {\em Four dimensional static and related critical spaces with harmonic
 curvature}, http://arxiv.org/pdf/1604.03241.pdf


\bibitem{Ko}  O. Kobayashi {\em A differential equation arising from scalar curvature function}. J. Math. Soc. Japan 34, no. 4, 665-675 (1982).

    \bibitem{La} J. Lafontaine,  {\em Sur la ge'ometrie d'une generalisation de l'equation differentielle d'Obata}, J. Math. Pures Appl. (9)  62 no. 1, (1983), 63-72.

\bibitem{Le} T. Levi-Civita, {\em $ds^2$ einsteiniani in campi newtoniani}, nine notes in Rendiconti della reale academia dei Lincei ser. $5^a$ 26 (1917), 27 (1918), 28 (1919).

\bibitem{MS} O. Munteanu and N. Sesum, {\em On gradient Ricci solitons}, Jour. Geom. Anal. 23
(2013), 539-561.

\bibitem{NW} L. Ni and N. Wallach, {\em On a classification of the gradient shrinking solitons}, Math. Res. Lett,
15  no. 5 (2008), 941-955.


\bibitem{MT2} P. Miao and  L.F. Tam,  {\em On the volume functional of compact manifolds with
boundary with constant scalar curvature}, Calculus of Variations and Partial Differential Equations,
 v. 36, Issue 2, (2009), 141-171.

\bibitem{MST} P. Miao, Y. G. Shi and L. F. Tam {\em On geometric problems related to Brown-York and Liu-Yau quasilocal mass.} Comm. Math. Phys. 298(2), 437–459 (2010) 20.


 \bibitem{Ob} M. Obata, {\em Certain conditions for a Riemannian manifold to be isometric with a sphere}, J.
Math. Soc. Japan 14(3), 333-340 (1962)

\bibitem{P} G. Perelman, {\em The entropy formula for the Ricci flow and its geometric applications}, http://arxiv.org/pdf/math/0211159v1.pdf  (2002).

\bibitem{Pe} Petersen, P.: Riemannian Geometry, Graduate Texts in Mathematics 171, Springer-Verlag New York (2006)

\bibitem{PW2}  P. Petersen and W. Wylie, {\em On the classification of gradient Ricci solitons},  Geom. Topol. 14  no. 4 (2010), 2277-2300.

\bibitem{QY} J. Qing and W. Yuan,  {\em  A note on static spaces and related problems}, Journal of Geometry and Physics, 74 (2013), 18-27.

\bibitem{QY2} J. Qing and W. Yuan,  {\em On scalar curvature rigidity of vacuum static space}, Math. Annalen  365, Issue 3–4,  (2016),  1257–1277.

\bibitem{WWW} J.Y. Wu, P. Wu and W. Wylie,   {\em Gradient shrinking Ricci solitons of half harmonic Weyl curvature},
http://arxiv.org/pdf/1410.7303.pdf (2014).

\bibitem{Yu} W. Yuan, {\em Volume comparison with respect to scalar curvature},

https://arxiv.org/pdf/1609.08849.pdf

\bibitem{Z} Z.H. Zhang, {\em Gradient shrinking solitons with vanishing Weyl tensor}, Pacific J. Math. 242 no. 1 (2009), 189-200.


\end{thebibliography}
\end{document}